\newcommand{\BibTeX}{{\scshape Bib}\kern-.08em\TeX}
\newcommand\aess{\alpha_{\operatorname{ess}}}
\newcommand\Mor{\operatorname{Mor}}
\newcommand*{\dprime}{{\prime\prime}\mkern-1.2mu}
\newcommand{\CBY}{\mathcal{C}_{Q}^{\operatorname{b}}(Y_3)}
\theoremstyle{plain}
\newtheorem{theorem}{Théorème}[section]
\newtheorem{lemma}[theorem]{Lemme}
\newtheorem{proposition}[theorem]{Proposition}
\newtheorem{corollary}[theorem]{Corollaire}
\newtheorem{conjecture}[theorem]{Conjecture}
\newtheorem{fconjecture}[theorem]{Formule empirique}
\newtheorem{rconjecture}[theorem]{Répartition empirique}
\theoremstyle{definition}
\newtheorem{definition}[theorem]{Définition}
\newtheorem{example}[theorem]{Exemple}
\newtheorem{principle}[theorem]{Principe}
\newtheorem*{remark*}{Remarque}
\DeclareMathOperator{\pgcd}{\operatorname{pgcd}}
\DeclareMathOperator{\h}{\operatorname{H}}
\DeclareMathOperator{\e}{\textbf{e}}
\newcommand\RR{\mathbf{R}}
\newcommand\ZZ{\mathbf{Z}}
\newcommand\QQ{\mathbf{Q}}
\newcommand\NN{\mathbf{N}}
\newcommand\PP{\mathbf{P}}
\newcommand{\A}{\mathbf{A}}
\newcommand\rab{R_{r_1,r_2}}
\newcommand\cab{C_{a,b}}
\newcommand{\Spec}{\operatorname{Spec}}
\newcommand{\Ceff}{\mathcal{C}_{\operatorname{eff}}}
\newcommand{\MQ}{\mathcal{M}_Q^{\operatorname{res}}}
\title[Distribution locale sur une surface torique II]{\bf Approximation diophantienne \\ et distribution locale \\ sur une surface torique \uppercase\expandafter{\romannumeral2}}
\author{\textsc{Zhizhong Huang}}
\email{zhizhong.huang@yahoo.com}
\address{Institut für Algebra, Zahlentheorie und Diskrete Mathematik\\ \textit{Leibniz Universität Hannover}\\ \textit{Welfengarten 1, 30167 Hannover, Deutschland}.}
\date{}
\begin{document}
		\numberwithin{equation}{section}

\begin{abstract}
	Nous proposons une formule empirique pour le problème de distribution locale des points rationnels de hauteur bornée.
	Il s'agit d'une version locale du principe de Batyrev-Manin-Peyre.
	Nous la vérifions pour une surface torique, sur laquelle des courbes rationnelles cuspidales et des courbes rationnelles nodales toutes les deux contribuent aux meilleures approximations en dehors d'un fermé de Zariski. Nous démontrons qu'en enlevant une partie mince, il existe une mesure limite et une formule asymptotique pour le grossissement critique. 
\end{abstract}

\begin{altabstract}
	We propose an empirical formula for the problem of local distribution of rational points of bounded height.
	This is a local version of the Batyrev-Manin-Peyre principle. 
	We verify this for a toric surface, on which cuspidal rational curves and nodal rational curves all give the best approximations outside a Zariski closed subset. We prove the existence a limit measure as well as an asymptotic formula for the critical zoom by removing a thin set.
\end{altabstract}
\maketitle
\section{Introduction}
\subsection{Contexte et heuristique}
Concernant les variétés ayant beaucoup de points rationnels, une question naturelle est combien il y en a de hauteur bornée et comment ils sont distribués. 
Dans des années 1990, Batyrev et Manin ont conjecturé une formule asymptotique (cf. \cite[3.11 \& 3.12]{BatyrevManin}) qui donne une prédiction pour l'ordre de croissance du cardinal de l'ensemble des points rationnels de hauteur bornée. Peyre  (cf. \cite[Conjecture 2.3.1]{Peyre1}) a ensuite reformulé et raffiné leur conjecture sous une forme faisant intervenir des mesures, que nous appellerons \emph{distribution globale} et nous énonçons comme suit. Soit $X$ une \guillemotleft bonne\guillemotright{}\ variété sur $\QQ$ (cf. \cite[Notations 2.1]{Peyre2}). On note $X(\mathcal{A}_\QQ)^{\operatorname{Br}}$ l'ensemble des points adéliques de $X$ pour lesquels l'obstruction de Brauer-Manin à l'approximation faible est triviale. On associe une hauteur de Weil exponentielle $H$ au fibré anticanonique $\omega_{X}^{-1}$.
\begin{principle}[Batyrev-Manin-Peyre, \cite{Peyre2} Répartition empirique 5.3]\label{conj:bmp}
	Il existe un ouvert $U\subseteq X$ tel qu'en notant et $\kappa=\operatorname{rg}\operatorname{Pic}(X)$,
	\begin{equation}\label{eq:countmeasure}
	\delta_{U,B}=\sum_{\substack{P\in U(\QQ)\\H(P)\leqslant B}} \delta_P,
	\end{equation}
	on ait
	$$\frac{1}{B (\log B)^{\kappa-1}}\delta_{U,B} \longrightarrow \mu_{X}^{\operatorname{Br}},\quad B\to \infty$$
	au sens de convergence vague pour certaine mesure $\mu_{X}^{\operatorname{Br}}$ déduite d'un produit de mesures $\prod_{\nu\in \operatorname{Val}(\QQ)} \mu_{\nu}$ sur $X(\mathcal{A}_\QQ)$ par restriction à $X(\mathcal{A}_\QQ)^{\operatorname{Br}}$.
\end{principle}
Remarquons qu'en fait, la partie réelle $\mu_{\infty}$ est une mesure à densité continue relativement à la mesure de Lebesgue sur $X(\RR)$ (cf. \cite[2.2.1]{Peyre1}). Alors que la conjecture de Batyrev-Manin est vérifiée pour beaucoup de variétés presque de Fano \cite[Définition 3.1]{Peyre2}, voire singulières, ce principe sous forme de mesure est rarement abordée dans la littérature. Pourtant, les travaux de Batyrev et Tschinkel \cite{B-T1} \cite{B-T2} semblent être en faveur de cela pour les variétés toriques. 

Nous nous demandons dans quelle mesure une version plus forte de ce principe puisse être valide. À savoir, prenons un ouvert $D_Q(B)$ d'un point $Q\in X(\RR)$ pour la topologie réelle dont la taille dépend de $B$. Pourrions-nous espérer qu'il existe une mesure $\mu_{X,Q}$, définie localement sur le lieu réel de $X$ que nous préciserons dans la suite, telle que pour certain $\kappa^\prime\geqslant 1$,
\begin{equation}\label{eq:generalquestion}
\sum_{P\in D_Q(B)\cap X(\QQ):H(P)\leqslant B}\delta_P\sim \operatorname{Vol}(D_Q(B)) B (\log B)^{\kappa^\prime-1} \mu_{X,Q},\quad B\to \infty?
\end{equation}
Une motivation de ce problème est l'étude de la \emph{distribution locale} des points rationnels. Il fut considérée en premier par S. Pagelot \cite{pagelot}, où il a pris pour $D_Q(B)$ des boules de rayon $\asymp B^{-\frac{1}{r}}$ ($r$ sera appelé \emph{facteur de zoom} dans la suite) et où il a constaté, sur certaines surfaces toriques, des phénomènes variés pour la distribution locale des points rationnels autour d'un point fixé (décrite par la mesure $\mu_{X,Q}$ dans \eqref{eq:generalquestion}), même pour de différents $r$ d'une variété fixée. Tout cela n'est pas \emph{a priori} reflétée par la distribution globale (i.e. la mesure $\mu_X^{\operatorname{Br}}$ dans \eqref{eq:countmeasure}).

Pour établir l'existence de $\mu_{X,Q}$, il faut souvent retirer certaines sous-variétés \emph{localement accumulatrices} et bien choisir le facteur de zoom $r$. Les travaux de D. McKinnon et M. Roth (\cite{McKinnon2007} et \cite{McKinnon-Roth1}) concernant l'approximation diophantienne sur les variétés algébriques fournissent une constante $\alpha$ de nature arithmétique et géométrique, appelée \emph{constante d'approximation} (Définition \ref{def:alpha}). S. Pagelot définit dans \cite{pagelot} \emph{la constante essentielle} $\aess$ (Définition \ref{def:aess}) qui caractérise l'approximation diophantienne générique. Par définition, on a $\alpha\leqslant \aess$. En prenant le facteur de zoom $r$ entre $\alpha$ et $\aess$, la forme de $\mu_{X,Q}$ nous permet de récupérer plus d'informations qui sont \guillemotleft négligées\guillemotright{} dans la considération \eqref{eq:countmeasure}. Voir des explications et des illustrations dans \cite{huang2}. Nous espérons que les constantes $\alpha,\aess$ jouent un rôle tout comme les invariants de Fujita (les invariants $\alpha(L),t(L)$ dans \cite[2.1 \& 3.12]{BatyrevManin}) dans le programme de Batyrev-Manin-Peyre (Principe \ref{conj:bmp}) (cf. aussi le travail \cite{Lehmann-Tanimoto1} et les références dedans).

Dans \cite[(1.1)]{huang1} et \cite[(1.2)]{huang2} nous avons défini une famille de mesures qui capturent les points dans $D_Q(B)$. Maintenant nous continuons à proposer des formules asymptotiques prédisant l'ordre de grandeur dans le cas $r=\aess$. Nous désignons par
$A_1(X)$ le groupe de Chow des $1$-cycles modulo l'équivalence algébrique, et par 
$l(X)$ le rang du sous-groupe de $A_1(X)$ engendré par les classes des courbes rationnelles $C$ vérifiant $\alpha(Q,C)=\aess(Q)$. Tout au long de cet article, sauf si mentionné autrement, toutes les constantes d'approximation et tous les degrés seront calculés par rapport au fibré anti-canonique. Pour une partie $Y$ de $X(\QQ)$, nous notons $\{\delta_{Y,Q,B,r}\}_B$ la famille de mesures de zoom de facteur $r$ comptant les points rationnels sur $Y$ de hauteur $\leqslant B$ (cf. \S\ref{se:zoomoper}) définie sur l'espace tangent $(T_Q X)_\RR$. 
\begin{fconjecture}[version faible]\label{fc:weak}
	En dehors d'une partie mince $M$, nous avons que pour toute fonction $f$ continue à support compact définie sur $(T_Q X)_\RR$,
	\begin{equation}\label{eq:weak}
	\delta_{X\setminus M,Q,B,\aess}(f)=O_f\left(B^{1-\frac{\dim X}{\aess}} (\log B)^{l(X)-1}\right).
	\end{equation}
\end{fconjecture}
\begin{rconjecture}[version forte]\label{fc:strong}
	Si $\aess(Q)>\dim X$, alors en dehors d'une partie mince $M$, nous avons
	\begin{equation}\label{eq:conj}
	\frac{1}{B^{1-\frac{\dim X}{\aess}} (\log B)^{l(X)-1}}\delta_{X\setminus M,Q,B,\aess}\to \delta_{\aess},
	\end{equation}
	au sens de convergence vague pour une certaine mesure $\delta_{\aess}$ sur $(T_Q X)_\RR$.
\end{rconjecture}
L'étude des formules \ref{fc:weak} \& \ref{fc:strong} devrait ajouter une nouvelle évidence sur la connexion entre l'arithmétique des corps globaux et celle des corps de fonctions.
\subsection{Résultats principaux}
Suite aux travaux \cite{huang1} et \cite{huang2} qui démontrent la formule \ref{fc:strong} pour la surface $X_3$ et \ref{fc:weak} pour $Y_4$ respectivement, dans cet article on considère la surface torique $Y_3$ dont l'éventail est représenté au milieu de la Figure \ref{fg:y3} suivante. 
\begin{figure}[h]
	\centering
	\includegraphics[scale=0.65]{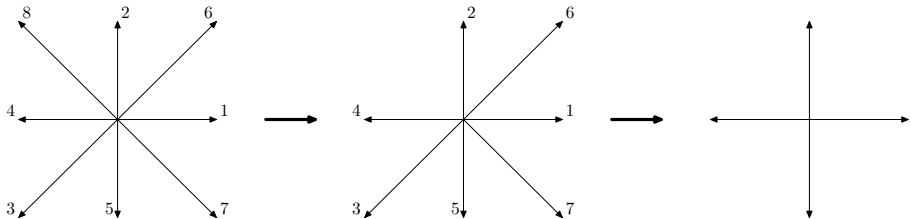}
	\caption{Les éventails de $Y_4$, de $Y_3$ et de $\PP^1\times\PP^1$}
	\label{fg:y3}
\end{figure}
Admettant $\PP^2$ et $\PP^1\times \PP^1$ comme modèles minimaux, elle est une surface de del Pezzo généralisée (cf. \cite{derenthal2014}) (c'est-à-dire la désingularisation minimale d'une surface de del Pezzo singulière de degré $5$).
Le résultat sur ces trois surfaces donne des évidences fortes sur le principe que le couple $(\alpha,\aess)$ et la façon dont elles sont calculées devraient caractériser l'accumulation locale des points rationnels.

Nous fixons tout au long ce travail le point central $Q=[1:1]\times [1:1]$. 
Notre premier résultat principal, concernant l'approximation de $Q$ par d'autres points rationnels sur $Y_3$, dit que des courbes nodales, qui couvrent une partie dense de $Y_3$ et font déjà des objets centraux d'étude pour la surface $Y_4$ \cite{huang2}, et des courbes cuspidales donnent en même temps les meilleurs approximants en dehors d'un fermé de Zariski (cf. \S\ref{se:cusp}, \S\ref{se:nodal}). Autrement dit, ces ceux types de courbes achèvent la constante $\alpha$.
\begin{theorem}[cf. Proposition \ref{po:lowerbound1}, Corollaire \ref{co:aess}]\label{th:main1}
	Nous avons 
	\begin{itemize}
		\item $\alpha(Q,Y_3)=2$. Elle s'obtient sur les trois courbes rationnelles lisses $l_i,(1\leqslant i\leqslant 3)$ de degré minimal passant par $Q$. Ces courbes sont localement accumulatrices;
		\item $\alpha_{\operatorname{ess}}(Q)=\frac{5}{2}$. Elle peut être calculée sur des courbes nodales et des courbes cuspidales passant par $Q$.
	\end{itemize}
\end{theorem}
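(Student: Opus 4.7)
La démonstration s'appuie sur la formule de McKinnon-Roth reliant l'exposant d'approximation sur une courbe rationnelle $C$ passant par $Q$ au degré anticanonique $-K_{Y_3}\cdot C$ et à la ramification en $Q$ de la normalisation: pour $C$ lisse en $Q$ on s'attend à $\alpha_C(Q) = -K_{Y_3}\cdot C$, et pour $C$ cuspidale (ramification d'ordre $2$) à $\alpha_C(Q) = (-K_{Y_3}\cdot C)/2$, obtenus via paramétrisations explicites. Pour la première assertion, je commencerais par énumérer, grâce à la structure torique et aux contractions vers les modèles minimaux $\PP^2$ et $\PP^1\times\PP^1$, les courbes rationnelles lisses de petit degré anticanonique passant par $Q$. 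Les trois courbes $l_i$ s'identifieraient comme transformées strictes de droites de $\PP^2$ et de règles de $\PP^1\times\PP^1$ passant par $Q$, toutes de degré anticanonique $2$, ce qui donnerait immédiatement $\alpha(Q,Y_3)\leqslant 2$; la minoration résulterait de l'observation que l'éventail (figure ci-dessus) ne présente aucune courbe torique rationnelle de degré anticanonique $\leqslant 1$ passant par $Q$. Le caractère localement accumulateur des $l_i$ découle d'un comptage de type Schanuel sur chaque $l_i \cong \PP^1$, fournissant $\asymp B^{1/2}$ points de hauteur $\leqslant B$, ce qui peuple tout voisinage adélique de rayon critique $\asymp B^{-1/2}$.

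Pour la seconde assertion, la majoration $\aess(Q)\leqslant 5/2$ se ramènerait à la construction explicite de courbes rationnelles singulières passant par $Q$ de degré anticanonique $5$ avec cusp ou nœud en $Q$. Typiquement, une cubique cuspidale de $\PP^2$ ayant $Q$ pour cusp se relève en $Y_3$ en une telle courbe (section \ref{se:cusp}), et un calcul via la paramétrisation locale $t\mapsto(t^2,t^3)$ donne $d(P,Q)\asymp H(P)^{-2/5}$, soit l'exposant $5/2$. Le cas des courbes nodales (section \ref{se:nodal}) procéderait de manière analogue, via une construction adéquate dans le système linéaire approprié, et vérifierait la même valeur $5/2$ par un comptage sur les deux branches.

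La minoration $\aess(Q)\geqslant 5/2$ constitue l'obstacle principal. Il s'agit d'exclure toute suite $(P_n)\subset Y_3(\QQ)$, en dehors d'un fermé strict $Z$ contenant les $l_i$, vérifiant $d(P_n,Q)\leqslant H(P_n)^{-2/5-\varepsilon}$ pour une infinité d'indices. La stratégie naturelle est de classifier, grâce à la rigidité torique et au cône effectif de $Y_3$, les courbes rationnelles (lisses, nodales ou cuspidales) de degré anticanonique $\leqslant 4$ passant par $Q$ — en nombre fini — et de vérifier cas par cas que leurs exposants d'approximation sont soit égaux à $2$ (auquel cas elles sont englobées dans $Z$ avec les $l_i$), soit $\geqslant 5/2$. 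Pour les degrés supérieurs, un argument de densité uniforme dans les familles, adapté des techniques développées dans \cite{huang2} pour $Y_4$, devrait exclure toute concentration anormale de points hors de $Z$; la difficulté technique se concentre sur le contrôle des courbes cuspidales de grand degré et sur l'identification du fermé minimal $Z$ à incorporer dans la définition de $\aess$.
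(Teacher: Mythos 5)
Votre argument présente deux lacunes sérieuses, toutes deux concernant les \emph{minorations}, et qui indiquent une incompréhension de la nature des inégalités à démontrer.

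Pour la minoration $\alpha(Q,Y_3)\geqslant 2$, vous proposez d'observer que l'éventail ne contient aucune courbe torique rationnelle de degré anticanonique $\leqslant 1$ passant par $Q$. Ceci passe à côté du problème: $Q$ se trouve dans l'orbite ouverte du tore et ne se situe donc sur \emph{aucune} courbe torique au bord. Mais surtout, la constante d'approximation $\alpha(Q,Y_3)$ prend en compte \emph{tous} les points rationnels de $Y_3$, pas seulement ceux sur des courbes; énumérer les courbes de petit degré ne peut fournir qu'une \emph{majoration} de $\alpha$. La minoration exige une inégalité de type Liouville valide pour \emph{tout} point $P\neq Q$, ce que le papier obtient directement par un calcul explicite avec la hauteur $\h_{\omega_{Y_3}^{-1}}$ (Proposition~\ref{po:lowerbound1}): pour $P=[x:y]\times[s:t]$ avec $s\neq t$, on a $\h(P)d(P)^2\geqslant |xy|(s-t)^2/(\pgcd(x,t)\pgcd(y,s)\pgcd(y,t))\geqslant 1$.

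Pour la minoration $\aess(Q)\geqslant 5/2$, vous identifiez correctement qu'il s'agit du point délicat mais votre stratégie est inopérante. Classifier les courbes rationnelles de degré $\leqslant 4$ puis invoquer un vague ``argument de densité'' pour les degrés supérieurs ne donne pas de minoration de $\aess$, pour la même raison que ci-dessus: $\aess(Q)=\sup_Y\alpha(Q,Y)$ porte sur des parties constructibles \emph{denses}, et la minoration requiert qu'en dehors d'un fermé strict, \emph{tout} point rationnel vérifie une inégalité de Liouville d'exposant $5/2$ --- pas seulement les points situés sur des courbes rationnelles particulières. Le papier ne passe pas du tout par une classification de courbes: la Proposition~\ref{po:lowerbound2} donne directement, par un calcul d'arithmétique élémentaire sur $[x:y]\times[s:t]\not\in l_1\cup l_2\cup l_3$, que $\h(P)d(P)^{5/2}\geqslant (2+C)^{-1/2}$ en factorisant la hauteur à l'aide des pgcd $\pgcd(x,t),\pgcd(y,s),\pgcd(y,t)$ et en regroupant les trois termes $|x-y|$, $|t-s|$, $|yt-xs|^{1/2}$ qui correspondent aux équations des trois courbes $l_i$. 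Votre description des majorations (constructions de courbes nodales et cuspidales de degré $5$ passant par $Q$ et calcul via le Théorème~\ref{th:singularities}) est en revanche correcte et voisine de celle du papier, qui utilise les courbes nodales $C_{a,b}$ avec $ab\neq\square$ de~\eqref{eq:nodal}.
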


Les courbes $l_i,1\leqslant i\leqslant 3$ sont en fait les (transformations strictes des) sections de bidegré $(1,1)$ joignant $Q$ et l'un des  $3$ points éclatés dans $\PP^1\times\PP^1$ (cf. \eqref{eq:curvesmalldegree}). Il s'en suit que la surface $Y_3$ et la partie $Y_3\setminus \cup_{i=1}^3 l_i$ vérifie la conjecture de D. McKinnon \cite[Conjecture 2.9]{McKinnon2007}. Bien que des courbes nodales et des courbes cuspidales aient la même valeur de constante d'approximation, le point $Q$ est approché de manière radicalement différente suivant elles (cf. \S\ref{se:compcuspnodal}), à cause de la façon dont $\alpha$ est calculée sur les courbes rationnelles (cf. Théorème \ref{th:singularities}). Le premier type correspond à l'approximation d'un nombre quadratique et le travail \cite{huang2} montre que le nombre des points rationnels entrant dans le zoom critique est faible et il n'existe pas de mesure décrivant la distribution locale (cf. Théorème \ref{th:huang2}). Alors que pour le deuxième on approche un point rationnel sur le corps de base $\QQ$. Dans ce cas l'ordre de grandeur est comparable avec la croissance de hauteur
et les points sont répartis asymptotiquement suivant une mesure limite (cf. Théorème \ref{th:Pagelot}).

Notre deuxième théorème principal confirme que la Répartition Empirique \ref{fc:strong} vaut pour $Y_3$. 
\begin{theorem}[cf. Théorèmes \ref{th:principalthm}, \ref{th:S2}, \ref{th:S3}]\label{th:main2}
	Soient $Z=\cup_{i=1}^3 l_i, U=Y_3\setminus Z$.
	\begin{enumerate}
		\item Pour $2=\alpha(Q,Y_3)\leqslant r<\frac{5}{2}$, nous avons $$\frac{1}{B^{1-\frac{1}{r}}}\delta_{Y_3,Q,B,r}\to \delta_r,$$ où $\delta_r$ est une mesure à support dans $Z$;
		\item Pour $r=\aess(Q)=\frac{5}{2}$, il existe une partie mince, qui est la réunion de $Z$ du type I et $M$ du type II, telle qu'en notant $V=Y_3(\QQ)\setminus (Z(\QQ)\cup M)$, nous ayons 
		$$\frac{1}{B^\frac{1}{5}}\delta_{V,Q,B,\frac{5}{2}}\longrightarrow \delta_{\frac{5}{2}},$$
		où $\delta_{\frac{5}{2}}$ est une mesure est absolument continue par rapport à la mesure de Lebesgue sur $\RR^2$.
	\end{enumerate}
\end{theorem}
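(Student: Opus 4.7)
Le plan consiste à traiter séparément les deux énoncés en s'appuyant sur le Théorème \ref{th:main1} et sur des paramétrages explicites des familles de courbes contribuant aux meilleurs approximants, via le torseur universel (ou anneau de Cox) de $Y_3$.

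\emph{Partie (1).} Pour $2 \leq r < \aess = \frac{5}{2}$, le Théorème \ref{th:main1} entraîne que tout point $P \in U(\QQ) = (Y_3 \setminus Z)(\QQ)$ satisfait $\alpha(P, Q) \geq \frac{5}{2} > r$. La contribution de $U$ à $\delta_{Y_3, Q, B, r}$ est donc $o(B^{1-1/r})$ quand $B \to \infty$, et la mesure de zoom est asymptotiquement concentrée sur $Z = \bigcup_{i=1}^3 l_i$. Chaque droite $l_i$ étant une courbe rationnelle de degré anticanonique $1$ passant par $Q$, on ramène le comptage local sur $l_i$ à un comptage de type Schanuel sur $\PP^1(\QQ)$ localisé au voisinage de $Q$. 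Ceci fournit une mesure à densité explicite de dimension de Hausdorff $1$ sur $l_i$ ; la somme sur les trois droites définit $\delta_r$.

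\emph{Partie (2).} Au zoom critique $r = \frac{5}{2}$, les courbes nodales et les courbes cuspidales passant par $Q$ donnent toutes les deux des meilleurs approximants. L'idée est de paramétrer la famille des courbes nodales rationnelles par $Q$ via le torseur universel de $Y_3$, ce qui fournit une description à deux paramètres $(s, t)$ où $s$ parcourt la famille $\{C_s\}$ de courbes nodales et $t \in \PP^1(\QQ)$ paramètre chaque $C_s$. Pour chaque $s$, on effectue un comptage des points rationnels de hauteur $\leq B$ sur $C_s$ dans la fenêtre de zoom de rayon $\asymp B^{-2/5}$, avec une uniformité suffisante en $s$ (obtenue par des lemmes de géométrie des nombres à constantes contrôlées). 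L'intégration sur $s$ produit alors la mesure $\delta_{5/2}$ de dimension de Hausdorff $2$.

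Les courbes cuspidales par $Q$ ne s'inscrivent pas naturellement dans cette famille nodale : leur paramétrage fait intervenir une extension quadratique, reflétant le fait que les cusps approchent des nombres quadratiques. Elles forment ainsi une partie mince $M$ de type II au sens de Serre, dont les points rationnels se concentrent sur des sous-ensembles semi-algébriques pour la topologie réelle. Les mesures unidimensionnelles sur chaque courbe cuspidale ne s'intègrent pas en une mesure bidimensionnelle, ce qui justifie de retirer $M$ pour obtenir la convergence annoncée. L'obstacle principal sera l'uniformité en $s$ du comptage local critique, avec un terme d'erreur assez fin pour permettre l'intégration, ainsi que l'identification précise de la densité limite de $\delta_{5/2}$ par une méthode de descente via le torseur universel.
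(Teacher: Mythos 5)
Votre proposition pour la partie (1) capture correctement l'idée générale : les points de $U$ ayant constante d'approximation $\geqslant\frac{5}{2}>r$, le zoom de facteur $r<\frac{5}{2}$ concentre la mesure sur $Z$, et chaque droite $l_i$ donne un comptage de type Schanuel. Ceci est cohérent avec les résultats généraux de \cite{huang2} (le papier ne redémontre d'ailleurs pas cette partie en détail, les théorèmes \ref{th:principalthm}, \ref{th:S2}, \ref{th:S3} cités ne concernant que le zoom critique).

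En revanche, pour la partie (2), votre approche est précisément celle que le papier écarte comme inopérante. Vous proposez de fixer la courbe nodale $C_s$, compter les points rationnels sur $C_s$ dans la fenêtre de zoom, puis intégrer sur $s$. C'est la méthode de \cite{huang2} pour $Y_4$, et l'introduction du papier souligne explicitement \og{}la difficulté pour démontrer l'existence de mesure limite au vu du paramétrage donné par des courbes nodales révélée dans \cite{huang2}\fg{}. L'innovation cruciale ici est exactement le contraire : \og{}plutôt que de dénombrer les points sur chaque courbe $\cab$, c'est-à-dire fixer le couple $(a,b)$ en comptant $(x,y)$\fg{}, on compte \emph{conjointement} les quadruplets $(a,b)\times(u,v)$. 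Cette comptabilité jointe se ramène, via le lemme \ref{le:parametrizationofpell} et la proposition \ref{le:keytranslation}, à des problèmes de congruences polynomiales du type $F(l)\equiv 0[m]$ avec $F$ quadratique irréductible, traités par l'équidistribution modulo $1$ des racines de congruences à la Hooley (Théorème \ref{th:Hooley}, Proposition \ref{prop:discrepancy}). Votre proposition ne mentionne aucun de ces outils, qui constituent le cœur technique de la démonstration.

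Deux autres points sont inexacts. D'abord, vous écrivez que les courbes cuspidales \og{}reflètent le fait que les cusps approchent des nombres quadratiques\fg{}, alors que c'est l'inverse : d'après la section \ref{se:compcuspnodal}, les courbes \emph{nodales} à tangentes irrationnelles correspondent à l'approximation de quadratiques irrationnels, tandis que les courbes \emph{cuspidales} approchent des rationnels et admettent une mesure limite (théorème \ref{th:Pagelot}). Ensuite, la raison pour laquelle $M$ doit être retiré n'est pas tant la difficulté de \og{}combiner\fg{} les mesures unidimensionnelles que le fait, démontré aux lemmes \ref{le:lowerboundM} et \ref{le:upperboundM}, que $M$ est \emph{localement faiblement accumulateur} : le comptage sur $M$ est d'ordre $B^{1/5}\log B$ (le problème de congruence devenant scindé, $(u+1)(u-1)\equiv 0[u+v]$, donc de densité logarithmiquement plus grande) alors que celui du complémentaire est d'ordre $B^{1/5}$. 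Retirer $M$ n'est pas une commodité technique mais une nécessité pour l'asymptotique énoncée.

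En résumé, la stratégie générale (paramétrage, isolation de $M$, comptage) est juste dans ses grandes lignes, mais l'étape centrale — passer du comptage courbe-par-courbe au comptage joint par réduction à des congruences et équirépartition des racines — manque, et c'est elle qui distingue ce résultat de celui de \cite{huang2} et le rend possible.
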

C'est un fait empirique que la difficulté d'établir l'existence de la mesure limite augmente lorsque le degré de la surface baisse. Ceci est parallèle avec la conjecture de Batyrev-Manin sur la répartition globale \eqref{eq:countmeasure}. 
Même s'il n'est pas ardu d'établir une majoration uniforme d'ordre de grandeur $B^{\frac{1}{5}+\delta}$ (cf. Proposition \ref{po:uniformupperbound}) pour $r=\frac{5}{2}$, cependant, démontrer des formules asymptotiques ainsi que la convergence de mesures de zoom est un processus beaucoup plus délicat, et surtout on a dû surmonter la difficulté pour travailler avec le paramétrage donné par des courbes nodales dans \cite[\S5.2.2]{huang2}. Nous renvoyons au Théorème \ref{th:principalthm} pour une formule asymptotique précise comprenant la mesure $\delta_{\frac{5}{2}}$ et un terme d'erreur. Ce serait intéressant de pouvoir interpréter le facteur arithmétique qui apparaît dans le terme principal de façon géométrique comme dans le Principe \ref{conj:bmp}. Signalons que la densité de la mesure $\delta_{\frac{5}{2}}$ fait apparaître les trois courbes $l_i,1\leqslant i\leqslant 3$ qui sont localement accumulatrices. Ceci est analogue au résultat pour la surface $X_3$ (cf. \cite[Théorème 1.3]{huang1}). 

Les \emph{parties minces} (Définition \ref{def:thinsets}), dont la contribution a été considérée comme négligeable dans plupart de cas (cf. le théorème de S.D. Cohen, \cite[\S13 Theorem 1]{Serre}), se montrent parfois problématiques dans le programme de Batyrev-Manin (Principe \ref{conj:bmp}). Elles ont été reprises depuis le premier contre-exemple de V. V. Batyrev et Yu. Tschinkel \cite{batyrevtschinkel1996}. Mais c'est rare dans la littérature qu'on soit capable de contrôler le cardinal de la partie obtenue en retirant une partie mince. À la connaissance de l'auteur, les seules réussites jusqu'au présent sont le résultat du Rudulier \cite{LeRudulier} (cf. aussi \cite[\S8]{Peyre3}) où elle a considéré des schémas de Hilbert des points sur des surfaces, et celui de Browning et Heath-Brown \cite{Browning-HB} sur la variété bi-projective $\sum_{i=0}^{3}x_iy_i^2=0$ admettant une structure de fibration en quadratiques. Un fait intéressant pour la surface $Y_3$ est que la partie mince  de type II $M$ consiste en précisément les points sur des courbes cuspidales. Le Théorème \ref{th:main2} (2) dit que les parties minces ont aussi des influences non-négligeables pour le problème de distribution locale. Il nous fournit ainsi un autre exemple sur la gestion de ces ensembles.

Bien que la paire $(\alpha,\aess)$ soit un indicateur pour détecter les sous-variétés \emph{localement accumulatrices} (Définition \ref{def:aess}), on aurait besoin d'un critère vis-à-vis des points rationnels à retirer dans la démonstration des formules \ref{fc:weak} et \ref{fc:strong}. Dans \cite[\S4]{Peyre3}, s'inspirant des études de familles des courbes rationnelles sur les variétés, Peyre a introduit la notion de \emph{liberté} définie pour chaque point rationnel, analogue aux \emph{pentes} à la J.-B. Bost. Ceci a pour but d'améliorer le Principe \ref{conj:bmp}. Il semble raisonnable que cette notion puisse aussi s'insérer dans notre cadre (voir aussi \cite[\S7]{Peyre5} pour une autre discussion).

\subsection{Heuristique géométrique}
Pour soutenir les formules \ref{fc:weak} \& \ref{fc:strong}, nous tendons à donner, dans un travail à venir, une analogie géométrique 
s'inspirant celle du Principe \ref{conj:bmp}.
Il s'agit d'une sorte de \guillemotleft distribution\guillemotright{}\ des courbes rationnelles sur une variété algébrique. À la lumière d'un théorème de McKinnon et Roth (Théorème \ref{th:singularities}) et à l'aide de la théorie de déformation, en prenant en compte seulement des courbes ayant des multiplicités suffisamment grandes en $Q$, cette heuristique nous fournit une interprétation des ordres de grandeur sur $B$ et surtout sur $\log B$ compatible avec tous les résultats connus, comme l'avait déjà constaté par Pagelot \cite{pagelot} pour la surface $\PP^1\times\PP^1$ que le bon exposant sur $\log B$ ne coïncide pas avec celui dans le cas global. En réalité, pour $Y_3$ étudiée dans ce texte, toutes telles courbes réalisant la constante essentielle appartiennent aux classes $m[\omega_{Y_3}^{-1}],m\in\NN_{\geqslant 1}$, qui forment un sous-groupe de $A_1(Y_3)$ de rang $1$ et donc $l(Y_3)=1$ (ce qui suggère que la puissance sur $\log B$ devrait être $l(Y_3)-1=0$).

\subsection{Outils analytiques}
Expliquons les méthodes que l'on utilise, comment le zoom critique se traduit en des problèmes analytiques et d'où la partie mince intervient.
\subsubsection{Descente à la Legendre}
En considérant la famille de courbes nodales $\{\cab\}$ \eqref{eq:nodal} à paramètres $(a,b)\in\ZZ_{\operatorname{prem}}^2$ et en choisissant pour chaque $(a,b)$ un paramétrage $\PP^1\to \cab$ (dont les paramètres sont notés $(x,y)\in\ZZ_{\operatorname{prem}}^2$),
nous obtenons une application rationnelle $\PP^1\times\PP^1\dashrightarrow Y_3$
qui donne un paramétrage local des points rationnels sur $Y_3$ par les $\cab$.
L'idée clef ici est que plutôt que de dénombrer les points sur chaque courbe $\cab$, c'est-à-dire fixer le coupe $(a,b)$ en comptant $(x,y)$, ce qui était la méthode adoptée dans \cite{huang2}, on compte directement les couples $(a,b)\times (x,y)$ et il se trouve que les $(a,b)$ sont paramétrés par les $(x,y)$ quand on restreigne à des équations de Pell-Fermat avec certaine condition de divisibilité que l'on précise tout de suite. Ce passage nous permet d'éliminer les paramètres $(a,b)$. Il a des similarités avec la méthode de \guillemotleft descente\guillemotright{}\ qui remonte à Legendre et Dirichlet (cf. par exemple \cite{Lemmermeyer} pour un survol historique et les références dedans) et est récemment reprises par Fouvry et Jouve dans les travaux \cite{FouvryJouve3}, \cite{FouvryJouve1} et \cite{FouvryJouve2} pour traiter la solution fondamentale des équations de Pell-Fermat $x^2-Dy^2=1$.

Plus précisément, il s'avère qu'il faut se concentrer sur les solutions des équations du type comme par exemple, (cf. \eqref{eq:eqpellfermatgen} pour la forme la plus générale que l'on considère)
\begin{equation}\label{eq:introduction}
ax^2-by^2=b-a,
\end{equation}
avec $b>a>0, \pgcd(a,b)=1$.
En écrivant \eqref{eq:introduction} en 
$a(x^2+1)=b(y^2+1)$,
nous en déduisons que
$$a\mid y^2+1,\quad b\mid x^2+1,$$
puisque $\pgcd(a,b)=1$.
Nous arrivons donc au paramétrage 
$$x^2=bk-1,\quad y^2=ak-1,\quad k\in\NN,$$
et nous voyons que $\pgcd(x^2+1,y^2+1)=k=k(x,y)$ et $$a=\frac{y^2+1}{k(x,y)},\quad b=\frac{x^2+1}{k(x,y)}.$$
En rajoutant la condition supplémentaire 
\begin{equation}\label{eq:divisibility1}
b-a\mid x-y,
\end{equation}
qui est équivalente à 
\begin{equation}\label{eq:divtocongruence}
(x^2-y^2)/k(x,y)\mid x-y\Leftrightarrow x+y \mid k(x,y),
\end{equation}
la résolution de l'équation \eqref{eq:introduction} plus la condition \eqref{eq:divisibility1} finalement se transforme en
 $$x+y\mid k(x,y)\Leftrightarrow x^2\equiv -1\mod (x+y)\Leftrightarrow y^2\equiv-1\mod (x+y).$$
On est alors amené à un problème de distribution des racines de congruence polynomiale \emph{quadratique}, pour lequel nous appuyons sur un résultat de Hooley (cf. \cite[Theorem 3]{Hooley1}, \cite[Theorem 2]{Hooley2}).

La partie mince entre dans cette histoire en changeant le signe de l'équation \eqref{eq:introduction} comme suit.
$$ax^2-by^2=a-b.$$
Dans ce cas nous sommes amené au problème de congruence
$$x^2\equiv y^2\equiv 1 \mod (x+y).$$
Ceci n'est plus quadratique mais \emph{linéaire} puisque nous avons de manière équivalente,
$$(x+1)(x-1)\equiv 0\mod (x+y),$$
qui admet en général plus de solutions.

En effet, définissons pour $F(X)\in\ZZ[X]$ un polynôme entier,
$$\varrho_F(n)=\#\{1\leqslant m\leqslant n:n\mid F(m)\}.$$
Lorsque $\deg F(X)= 2$, il est classiquement connu d'après Ingham \cite{Ingham} et Erd\H{o}s \cite{Erdos} que
\begin{equation}\label{eq:asymp1}
\sum_{1\leqslant n\leqslant X}\varrho_F(n)\sim \begin{cases}
&c_F X; \text{ si } F(X) \text{ irréductible sur } \QQ;\\
&c_F X\log X; \text{ si } F(X) \text{ a deux racines rationnelles distinctes}.
\end{cases}
\end{equation}
\subsubsection{Répartition modulo $1$ des racines de congruence polynomiales}
Pour obtenir une mesure limite de dimension $2$, nous avons besoin en outre d'estimer des sommes plus raffinées que \eqref{eq:asymp1} suivantes qui ressemble à 
\begin{equation}\label{eq:sumtheta}
\sum_{\substack{1\leqslant l\leqslant m \leqslant X\\F(l)\equiv 0\mod  m}}\mathbf{1}_{]\theta_1,\theta_2]}\left(\frac{l}{m}\right).
\end{equation}
pour $0<\theta_1<\theta_2\leqslant 1$. 

Quand $F(X)$ est irréductible, Hooley \cite{Hooley1} \cite{Hooley2} démontré en premier que la série $(\frac{l}{m})$, formée par des couples $(l,m)\in\NN_{\geqslant 1}^2$ numérotés convenablement tels que $F(l)\equiv 0\mod m$, est \emph{uniformément répartie} dans l'intervalle $]0,1]$. Par conséquent, la somme \eqref{eq:sumtheta} ci-dessus se comporte comme $\sim c_F (\theta_2-\theta_1)X$. En pratique, le problème de comptage nous amène à une somme suivante plus générale que \eqref{eq:sumtheta}. Par un processus de \guillemotleft limite double\guillemotright{}, nous arrivons à démontrer que 
$$\sum_{\substack{1\leqslant l\leqslant m\\F(l)\equiv 0\mod m\\G(\frac{l}{m})m\leqslant X}}\mathbf{1}_{]\theta_1,\theta_2]}\left(\frac{l}{m}\right)\sim XC_F \int_{\theta_1}^{\theta_2}\frac{\operatorname{d}x}{G(x)},$$
où $G:\mathopen]0,1]\mathclose\to\RR_{>0}$ est une fonction continue bornée (cf. Proposition \ref{po:centralcouting}).

Le cas où $F(X)$ est réductible 
est récemment abordé par Dartyge et Martin \cite{Dartyge-Martin}. Leur résultat implique que les points rationnels correspondant à ce cas ne se répartissent pas assez aléatoirement et donc sont à retirer dans notre dénombrement. Plus de détails se trouvent dans \S\ref{se:notuniformdistr}.
\subsection{Organisation du texte} 
Nous commençons par rappeler la définition des constantes approximation et de l'opération de zoom dans \S\ref{se:approx}. 
Une discussion de la géométrie de $Y_3$ occupe de \S\ref{se:geometry}, où les équations de la famille de courbes nodales et celle de courbes cuspidales sont données en détails (\S\ref{se:curvesony3}). Nous démontrons le Théorème \ref{th:main1} et le Théorème \ref{th:main2} (1) dans \S\ref{se:appconstmaj} et nous donnons une majoration uniforme naïve  (Proposition \ref{po:uniformupperbound}) dont la preuve est relativement courte. Pour démontrer la Répartition empirique \ref{fc:strong}, nous avons besoin du paramétrage des points rationnels par des courbes nodales, donné dans \S\ref{se:parametrization}. La définition de partie mince est ensuite rappelée dans \S\ref{se:thinsets}. En particulier pour la surface $Y_3$ nous démontrons que l'ensemble des points se situant sur des courbes cuspidales forme une partie mince (\S\ref{se:thinsetiny3}). Le dénombrement se déroule dans \S\ref{se:counting}. Nous établions la convergence vague de la famille de mesures de zoom (\S\ref{se:criticalmeasure} et \S{\ref{se:otherregions}}) en dehors de la partie mince (\S\ref{se:countthethinpart}). 
Dans l'Appendice \ref{se:congHooley}, nous étudions, en suivant Erd\H{o}s et Hooley, des problèmes de congruence polynomiale dans les cas irréductibles (\S\ref{se:irred}) et scindés (\S\ref{se:splitcase}), avec une référence spéciale à l'équirépartition modulo $1$ des racines de congruence (\S\ref{se:uniformdistmodulo1} et \S\ref{se:notuniformdistr}). 

\subsection{Notations} 
$\mu(\cdot)$ désigne la fonction le Möbius, $\tau(\cdot)$ désigne la fonction donnant le nombre de diviseurs. La lettre $p$ est réservé aux nombres premiers. La condition $p^\nu \| n$ signifie que $p^\nu\mid n$ et $p^{\nu+1}\nmid n$. On désigne par $a=\square$ pour un nombre réel $a$ si $\sqrt{a}\in\QQ$.
Soit $E\subset\ZZ^n$, on note $E_{\operatorname{prem}}$ le sous-ensemble de $E$ définie par
$$E_{\operatorname{prem}}=\{(x_1,\cdots,x_n)\in E:\pgcd(x_i,1\leqslant i\leqslant n)=1\}.$$ Soient $b,c,m\in\ZZ$. On écrit désormais $b\equiv c[m]$ pour $b \equiv c\mod m$.
Définissons une fonction arithmétique $g:\NN_{\geqslant 1}\to\NN_{\geqslant 1}$ donnée par
\begin{equation}\label{eq:functiong}
g(n)=\prod_{p}p^{\lceil \frac{v_p(n)}{2}\rceil},\quad n\in\NN.
\end{equation}
Pour un domaine $I\subset \RR^n$, $\textbf{1}_I(\cdot)$ désigne sa fonction caractéristique. 
\tableofcontents
\section{Constante d'approximation et opération de grossissement}\label{se:approx}
\subsection{Constantes d'approximation et constantes essentielles}
	La notion de constantes d'approximation fut introduite en premier par D. McKinnon dans \cite{McKinnon2007}, comme étant une généralisation de \emph{mesure d'irrationalité} dans l'approximation diophantienne classique. Elle apparaît aussi dans le travail de pionnier de S. Pagelot \cite{pagelot} sur la distribution locale des points rationnels. Cette notion est ensuite reprise et réétudiée systématiquement par D. McKinnon et M. Roth dans \cite{McKinnon-Roth1}. Nous résumons brièvement quelques propriétés dont nous aurons besoin ici et nous renvoyons le lecteur à \cite[\S2]{McKinnon-Roth1}, \cite[\S1.1.3]{huang1} et \cite[\S2]{huang2} pour plus de détails.
	
	Soient $X$ une variété projective irréductible définie sur $\QQ$, $Q_0\in X(\bar{\QQ})$ un point algébrique réel fixé et $L$ un fibré en droites gros tel qu'il induise une application birationnelle de $X$ sur son image et bien définie sur un ouvert $U_0$ contenant $Q_0$. Nous y associons une hauteur de Weil exponentielle $H$ et nous fixons une distance projective archimédienne $d(\cdot,\cdot)$ (cf. \cite[\S2]{McKinnon-Roth1}).
\begin{definition}[McKinnon-Roth, cf. aussi \cite{huang2}, Définition 2.2]\label{def:alpha}
 Soit $V$ une partie constructible de $X$. \emph{La constante d'approximation} $\alpha(Q_0,V)$ est le supremum des $\delta>0$ tels que l'inégalité du type de Liouville suivante
 $$\exists C(\delta)>0,\quad \forall y\in (V\cap U_0)(\QQ)\setminus \{Q_0\},\quad d(Q_0,y)^\delta H(y)>C(\delta),$$
 soit valide. 
\end{definition}
D'après cette définition, la valeur de $\alpha(Q_0,V)$ est indépendante du choix de la distance et celui de la hauteur. Une plus petite valeur de la constante correspond à de meilleures approximations, contrairement à la notion classique de mesure d'irrationalité (puisque nous avons imposé l'exposant sur la distance). 

\begin{definition}[Pagelot]\label{def:aess}
	Soit $V$ une partie constructible de $X$. \emph{La constante essentielle} (par rapport à $V$) est 
	$$\alpha_{\operatorname{ess}}(Q_0,V)=\sup_{\substack{Y\subseteq V \text{partie constructible}\\ \text{dense pour la topologie de Zariski induite}}} \alpha(Q_0,Y).$$
	Par convention, $\aess(Q_0)=\aess(Q_0,X)$.
	$V$ est dite \emph{localement accumulatrice} si elle vérifie
	$$\alpha_{\operatorname{ess}}(Q_0,V)<\alpha_{\operatorname{ess}}(Q_0).$$
\end{definition}
La constante essentielle mesure donc l'approximation générique du point $Q_0$, et les variétés localement accumulatrices contiennent des points plus proches de $Q_0$.
Ces deux notions sont utiles pour interpréter le principe célèbre dans l'approximation diophantienne: toute approximation suffisamment \guillemotleft bonne\guillemotright{}\ (i.e. dont la constante d'approximation est plus petite que la constante essentielle) devrait être faite sur certaines sous-parties strictes de la variété.

Les courbes rationnelles jouent un rôle très important dans le comportement local des points rationnels (comme conjecturé par McKinnon \cite[Conjecture 2.7]{McKinnon2007}). Leur constantes d'approximation sont calculées de la façon suivante. 
\begin{theorem}[McKinnon-Roth, \cite{McKinnon-Roth1}, Theorem 2.16]\label{th:singularities}
Soient $C$ une courbe rationnelle définie sur $\QQ$ et $L$ un faisceau inversible ample sur $C$ et $Q_0\in C(\bar{\QQ})$. Soit $\phi:\PP^1\to C$ le morphisme de normalisation. Alors 
$$\alpha(Q_0,C)=\aess(Q_0,C)=\min_{P\in\phi^{-1}(Q_0)}\frac{d}{m_P r_P},$$
où
$d=\deg_C(L)$, $m_P$ est la multiplicité de la branche de $C$ passant par $Q_0$ correspondant à $P$ et 
\begin{equation*}
r(P)=\begin{cases}
0 \text{ si } k(P)\not\subset \RR;\\
1 \text{ si } k(P)=\QQ;\\
2 \text{ sinon},
\end{cases}
\end{equation*}
où par convention, $r_P=0$ signifie que $\frac{d}{m_Pr_P}=\infty$. 
\end{theorem}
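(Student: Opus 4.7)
Ma stratégie est de transporter l'analyse de l'approximation sur $C$ au modèle lisse $\PP^1$ via la normalisation $\phi$, puis d'étudier séparément la contribution de chaque préimage $P\in \phi^{-1}(Q)$. Puisque $\phi^\ast L$ est un fibré en droites de degré $d$ sur $\PP^1$, toute hauteur de Weil associée vérifie $H_C(\phi(t))\asymp H_{\PP^1}(t)^d$ pour $t\in \PP^1(\QQ)$. Toute suite $(y_n)\subset C(\QQ)\setminus\{Q\}$ convergeant vers $Q$ se relève, quitte à extraire, en une suite $(t_n)\subset \PP^1(\QQ)$ convergeant vers un certain préimage $P$; la paramétrisation analytique locale de la branche correspondante fournit l'équivalent $d(Q,\phi(t))\asymp |t-P|^{m_P}$ pour la distance archimédienne. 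Ainsi, pour tout $\delta>0$,
\[ d(Q,\phi(t))^{\delta}\, H_C(\phi(t))\asymp |t-P|^{\delta m_P}\, H_{\PP^1}(t)^d, \]
et tout revient à optimiser cette quantité préimage par préimage.

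L'étude cas par cas se fait selon l'extension $k(P)$. Si $k(P)\not\subset \RR$ (\emph{i.e.} $r_P=0$), aucune suite rationnelle ne converge vers $P$ dans $\PP^1(\RR)$ et ce préimage ne contribue pas, ce qui est cohérent avec la convention $d/(m_P r_P)=+\infty$. Si $k(P)=\QQ$ (\emph{i.e.} $r_P=1$), en écrivant $P=p/q$ l'inégalité élémentaire $|a/b-p/q|\geqslant 1/(bq)$ donne $|t-P|\gg_P H(t)^{-1}$; réciproquement la suite $t_n=P+1/(qn)$ sature cette borne avec $H(t_n)\asymp n$, d'où un exposant critique $d/m_P=d/(m_P r_P)$. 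Si $k(P)$ est une extension quadratique réelle (\emph{i.e.} $r_P=2$), le théorème de Liouville pour les nombres algébriques de degré $2$ fournit $|t-P|\gg_P H(t)^{-2}$, et le théorème de Dirichlet (ou les convergents des fractions continues) garantit que cette borne est atteinte infiniment souvent, d'où un exposant critique $d/(2m_P)=d/(m_P r_P)$.

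En combinant les trois cas et en prenant l'infimum des exposants admissibles simultanément pour tous les préimages, on obtient $\alpha(Q,C)=\min_{P\in \phi^{-1}(Q)} d/(m_P r_P)$. L'égalité $\alpha(Q,C)=\aess(Q,C)$ résulte de ce que toute partie constructible dense dans $C$ est cofinie: retirer un nombre fini de points rationnels ne modifie aucune des estimations ci-dessus, car les suites optimales saturant les bornes restent, à nombre fini de termes près, contenues dans toute telle partie. La principale subtilité se concentre dans le cas $r_P=2$: il faut y invoquer Liouville et Dirichlet, et vérifier que la hauteur de Weil sur $\PP^1_{\QQ}$ est compatible avec ces énoncés classiques (ce qui se fait en choisissant des coordonnées affines rationnelles autour de $P$, de sorte que $H(t)\asymp \max(|a|,|b|)$ pour $t=a/b$ avec $\pgcd(a,b)=1$). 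Une vigilance supplémentaire est nécessaire lorsque $C$ possède plusieurs branches en $Q$: on doit alors travailler branche par branche dans un voisinage analytique où $\phi$ réalise un difféomorphisme local sur sa branche, la formule finale en $\min$ reflétant précisément la meilleure contribution parmi celles-ci.
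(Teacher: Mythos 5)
Votre stratégie générale — transporter l'analyse au modèle lisse $\PP^1$ via la normalisation, utiliser $H_C(\phi(t))\asymp H_{\PP^1}(t)^d$ et $d(Q,\phi(t))\asymp|t-P|^{m_P}$, puis traiter chaque préimage $P$ séparément — est la bonne, et le traitement des cas $r_P=0$ et $r_P=1$ ainsi que l'argument de cofinitude pour $\alpha=\aess$ sont corrects.

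Il y a cependant une lacune réelle dans le cas $r_P=2$. Vous écrivez \og Si $k(P)$ est une extension quadratique réelle\fg, mais le \og sinon\fg{} de l'énoncé couvre \emph{tout} préimage réel algébrique avec $k(P)\neq\QQ$, donc notamment les degrés $3,4,\dots$ (par exemple un point singulier à trois branches permutées par une extension galoisienne cubique totalement réelle donne $[k(P):\QQ]=3$). Or le théorème de Liouville, que vous invoquez, ne fournit que $|t-P|\gg H(t)^{-[k(P):\QQ]}$, ce qui est strictement plus faible que la minoration $|t-P|\gg_\varepsilon H(t)^{-2-\varepsilon}$ nécessaire pour établir $\alpha\geqslant d/(2m_P)$ dès que $[k(P):\QQ]\geqslant 3$. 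Cette minoration est précisément le théorème de Roth, et c'est d'ailleurs pour cela que l'énoncé est attribué dans le papier à McKinnon--Roth, dont la démonstration repose explicitement sur Roth. Le côté majoration (Dirichlet / fractions continues donnant $|t_n-P|<H(t_n)^{-2}$ infiniment souvent) reste valable en tout degré, donc seule la minoration est affectée. Pour compléter votre preuve il suffit de remplacer Liouville par Roth dans ce cas, avec le passage à la limite $\varepsilon\to 0$ pour obtenir l'exposant exact $d/(2m_P)$ dans la borne inférieure sur $\alpha$.
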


\subsection{Opération de grossissement}\label{se:zoomoper}
On résume la définition de l'opération de zoom. Pour plus de détails, voir \cite[\S2.2]{huang2}.
\subsubsection{Énoncé du problème}

On note $\mathcal{C}^{\operatorname{b}}_{Q_0}X$ l'espace vectoriel des fonctions continues à support compact définie sur $(T_{Q_0} X)_\RR$ (l'espace tangent de $X$ en $Q_0$) à valeurs réelles. 
En ayant fixé un difféomorphisme local
$$\varrho:X(\RR)\dashrightarrow (T_{Q_0}X)_\RR,$$
pour $Y$ un sous-ensemble de $X(\QQ)$, la famille de mesures de zoom $\{\delta_{Y,Q_0,B,r}\}_B$ (de facteur $r$ par rapport à $Y$) est définie par
\begin{equation}\label{eq:zoommeasure}
\delta_{Y,Q_0,B,r}(f)=\sum_{x\in Y:H_L(x)\leqslant B} f(B^{\frac{1}{r}}\rho(x)),\quad \forall f\in \mathcal{C}^{\operatorname{b}}_{Q_0}X.
\end{equation}
Si $U$ est un sous-schéma de $X$, on écrit souvent $\delta_{U,Q_0,B,r}$ pour $\delta_{U(\QQ),Q_0,B,r}$.
Plus le facteur de zoom $r$ est grand, plus le zoom est faible (c'est-à-dire on compte plus de points).
 Si $r<\alpha(U,Q_0)$, alors on a $\delta_{U,Q_0,B,r}\to \delta_{Q_0}$ la mesure de Dirac concentrée sur $Q_0$ (\cite[Proposition 2.8]{huang2}).
 Le cas le plus intéressant est quand $U$ est une partie dense de $X$ tel que
 $\alpha(Q_0,U)=\aess(Q_0)<\infty$ et que l'on prend le facteur de zoom $r=\aess(Q_0)$, appelé \emph{critique}. Il semble exister souvent un \guillemotleft saut\guillemotright{}\ de dimension du support de mesures de zoom quand le facteur $r$ traverse $\aess(Q_0)$.
 On espère aussi que pour $r>\aess(Q_0)$, les points se distribuent de façon plus uniforme autour de $Q_0$.  Le résultat \cite[Théorème 1.2 (2)]{huang2} est un premier pas dans cette direction.
		
    Le problème \eqref{eq:generalquestion} maintenant se pose de la manière suivante:
	Existe-t-il $\beta=\beta(r),\gamma=\gamma(r)\geqslant 0$ et $\delta_r$ une mesure sur $(T_{Q_0}X)_\RR$ tels que
\begin{equation}\label{eq:qlocal1}
	\frac{\delta_{U,Q_0,B,r}}{B^\beta(\log B)^\gamma}\longrightarrow \delta_r,\quad B\to \infty
\end{equation}
	au sens de \emph{convergence vague}?

\begin{remark*}
	La formulation de la série \eqref{eq:zoommeasure} dépende \emph{a priori} de la hauteur et du difféomorphisme local choisis. La dernière est évidemment fonctorielle: un changement d'une carte locale devrait produire une autre mesure limite qui est la composée de l'ancienne avec cette application.
	Toutefois, dans tous les exemples connus, la fonction de densité de la mesure limite reste invariante et donc semble être une propriété intrinsèque et géométrique.
\end{remark*}
\subsection{Comparaison entre les courbes nodales et les courbes cuspidales}\label{se:compcuspnodal}
Comparons maintenant l'approximation sur les courbes cubiques ayant des singularités de types différents en le point à approcher.
Lorsque les pentes des tangentes en ce point sont irrationnelles, il s'agit essentiellement de l'approximation des nombres \emph{algébriques irrationnels} par les nombres rationnels. 
Lorsqu'elles sont rationnelles, on revient à l'approximation des nombres \emph{rationnels}.

Par meilleur illustrer cette différence, nous prendrons des cubiques affines d'équation simple se plongeant dans $\PP^2$.
On munit le fibré $\mathcal{O}(1)$ d'une hauteur de Weil absolue exponentielle $H$.
Choisissons sans perte de généralité le point $Q_0=[0:0:1]$ à approcher et
définissons la distance (pour $[u:v:w]\in\PP^2(\QQ)$)
$$d([u:v:w])=\max(\left|\frac{u}{w}\right|,\left|\frac{v}{w}\right|).$$

\noindent \textbf{Cas I.} Considérons la courbe nodale $\mathcal{C}:y^2=x^3+ax^2$ pour $a\in\QQ_{\geqslant 0}$.
Les tangentes en points $(0,0)$ ont les pentes $\pm\sqrt{a}$. 
Un paramétrage pour $\mathcal{C}$ est donné par 
$$k\longmapsto (k^2-a,k(k^2-a)).$$
On voit alors que la distance induite est $|k^2-a|$, qui est équivalente à 
$$\min(|k-\sqrt{a}|,|k+\sqrt{a}|).$$
Donc pour approcher le point $(0,0)$, il faut que $k$ approche $\pm \sqrt{a}$ (correspondant aux deux branches dans la normalisation).
La constante d'approximation est alors égale au degré (par rapport à $\mathcal{O}(1)$) de $\mathcal{C}$ divisé par $2$ si $a\neq\square$. Car, si l'on note $\phi:\mathcal{C}_0\to\mathcal{C}$ la normalisation, le sous-schéma $\phi^{-1}(Q_0)=\{\pm \sqrt{a}\}$ n'est pas défini sur $\QQ$. On a alors $r_P=2,\forall P$ dans le Théorème \ref{th:singularities}.
\begin{theorem}[\cite{huang2} Théorème 1.3]\label{th:huang2}
	Nous avons $\alpha(Q_0,\mathcal{C})=\alpha_{\operatorname{ess}}(Q_0,\mathcal{C})=\frac{3}{2}$, et
	$$\delta_{\mathcal{C},Q_0,\frac{3}{2},B}(\chi(\varepsilon))=O_\varepsilon(1).$$
\end{theorem}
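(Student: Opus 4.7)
La stratégie se déroule en trois étapes dont la première (le calcul de $\alpha(Q,\mathcal{C})$) est immédiate, et dont la seconde (le contrôle uniforme $O_\varepsilon(1)$) est l'étape centrale.

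\textbf{Étape 1 : calcul de $\alpha(Q,\mathcal{C})=\aess(Q,\mathcal{C})=\frac{3}{2}$.} La courbe $\mathcal{C}$ est une cubique plane irréductible nodale en $Q=[0:0:1]$, avec deux tangentes de pentes $\pm\sqrt{a}$. La normalisation $\phi:\PP^1\to\mathcal{C}$ est donnée par $k\mapsto(k^2-a,k(k^2-a))$ et l'on a $\phi^{-1}(Q)=\{k=\sqrt{a},\,k=-\sqrt{a}\}$, deux points conjugués sur $\QQ(\sqrt{a})$ puisque $a\neq\square$. Ces points ont $m_P=1$ et $r_P=2$. Comme $\deg_{\mathcal{O}(1)}(\mathcal{C})=3$, le théorème~\ref{th:singularities} donne immédiatement $\alpha(Q,\mathcal{C})=\aess(Q,\mathcal{C})=\frac{3}{1\cdot 2}=\frac{3}{2}$.

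\textbf{Étape 2 : réduction à une équation de Pell-Fermat.} J'écris tout point rationnel $y\in\mathcal{C}(\QQ)\setminus\{Q\}$ comme image de $k=p/q$ avec $\pgcd(p,q)=1$. En coordonnées projectives, $y=[q(p^2-aq^2):p(p^2-aq^2):q^3]$, et lorsque $a\in\ZZ$, la coprimalité $\pgcd(p,q)=1$ entraîne $\pgcd(p^2-aq^2,q)=1$, donc les trois coordonnées sont premières entre elles. Pour les approximants (\emph{i.e.} $p/q$ proche de $\pm\sqrt{a}$, donc $|p^2-aq^2|=O(1)$) on obtient
\begin{equation*}
H(y)\asymp q^3,\qquad d(Q,y)\asymp \frac{|p^2-aq^2|}{q^2}.
\end{equation*}
Ainsi le dénombrement $\delta_{\mathcal{C},Q,\frac{3}{2},B}(\chi(\varepsilon))$ se traduit par les deux contraintes $q^3\ll B$ et $|p^2-aq^2|/q^2\ll \varepsilon B^{-2/3}$, soit
\begin{equation*}
q\ll B^{1/3}\quad\text{et}\quad|p^2-aq^2|\ll\varepsilon.
\end{equation*}

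\textbf{Étape 3 : bornage par la théorie de Pell.} Comme $p^2-aq^2\in\ZZ$, la seconde contrainte ne laisse qu'un nombre fini (de l'ordre de $\varepsilon$) de valeurs possibles $c\neq 0$ pour $p^2-aq^2$ (la valeur $c=0$ est exclue car $a\neq\square$). Pour chaque $c$ fixé, la combinaison des deux contraintes impose
\begin{equation*}
\sqrt{|c|/\varepsilon}\cdot B^{1/3}\ll q\ll B^{1/3}.
\end{equation*}
Or les solutions entières de $p^2-aq^2=c$ forment une union finie d'orbites sous l'action de l'unité fondamentale de $\ZZ[\sqrt{a}]$, donc les $q$ possibles forment une union finie de progressions géométriques de raison $\lambda>1$ (le rapport de l'unité fondamentale). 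Le nombre de termes d'une telle progression dans un intervalle $[\alpha B^{1/3},B^{1/3}]$ avec $\alpha\leqslant 1$ fixé est $O(\log(1/\alpha))=O_\varepsilon(1)$, indépendant de $B$. La sommation sur les $O(\varepsilon)$ valeurs de $c$ préserve l'uniformité en $B$ et fournit la borne annoncée $O_\varepsilon(1)$.

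\textbf{Étape 4 : non-existence d'une mesure limite.} Puisque $\delta_{\mathcal{C},Q,\frac{3}{2},B}(\chi(\varepsilon))=O_\varepsilon(1)$, aucune normalisation $B^{\beta}(\log B)^\gamma$ avec $(\beta,\gamma)\neq(0,0)$ ne peut produire une limite non triviale ; et si l'on ne normalise pas, la limite supérieure diffère de la limite inférieure (les approximants se concentrent sur les deux directions tangentielles $k\to\pm\sqrt{a}$ de façon discrète, réglée par les convergents de la fraction continue de $\sqrt{a}$). La dépendance en $\varepsilon$ est essentiellement en $\log\varepsilon$, incompatible avec l'intégrale d'une mesure de Radon absolument continue au voisinage de $Q$.

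\textbf{Obstacle principal.} Le point délicat est l'étape 3, où il faut vraiment utiliser que \emph{les deux} bornes sur $q$ (supérieure venant de la hauteur, inférieure venant de la distance) se combinent en un intervalle dont les extrémités ont un rapport borné indépendamment de $B$. Sans cette coïncidence propre au zoom critique $r=\aess$, on n'obtiendrait qu'un $O_\varepsilon(\log B)$. C'est ici qu'intervient explicitement la nature quadratique irrationnelle des tangentes, \emph{via} la croissance géométrique des solutions de Pell-Fermat.
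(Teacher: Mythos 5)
Votre démonstration suit exactement l'approche indiquée par le texte (et développée en détail dans \cite{huang2}), à savoir la réduction à l'équation de Pell--Fermat $p^2-aq^2=c$ puis l'exploitation de la croissance géométrique des solutions en $q$ pour borner le nombre de couples dans une fenêtre du type $[\alpha B^{1/3},B^{1/3}]$ : les étapes 1 à 3 sont correctes et constituent bien l'argument essentiel. L'étape 4 reste heuristique là où une preuve complète devrait exhiber la fluctuation effective du dénombrement le long de sous-suites de $B$ (par exemple en suivant les positions $B^{2/3}\varrho(x_n)$ des approximants d'une orbite fixée, qui ne se stabilisent pas quand $B\to\infty$) ; c'est un point que l'énoncé attribue également à \cite{huang2} et qui mériterait d'être explicité, mais l'idée sous-jacente --- discrétion des solutions de Pell et dépendance logarithmique en $\varepsilon$ incompatible avec une mesure de Radon --- est la bonne.
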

\noindent \textbf{Cas II.} Considérons la courbe cuspidale $\mathcal{C}^\prime:y^2=x^3$ avec le point de rebroussement $Q_0=(0,0)$ ayant la tangente $y=0$.
Un paramétrage usuel pour cette courbe est
$$t \longmapsto (t^2,t^3).$$
D'où la distance induite se calcule comme $|t^2|$.
La constante d'approximation est alors divisée par $2$ car $m_P=2$ dans le Théorème \ref{th:singularities}.
\begin{theorem}[Pagelot \cite{pagelot}, cf. \cite{huang2} Théorème A.1]\label{th:Pagelot}
	Nous avons $\alpha(Q_0,\mathcal{C}^\prime)=\aess(Q_0,\mathcal{C}^\prime)=\frac{3}{2}$ et que
	$$\frac{1}{B^\frac{3}{4}}\delta_{\mathcal{C}^\prime,Q_0,\frac{3}{2},B}(\chi(\varepsilon))\to \delta_{\frac{3}{2}},$$
	une mesure à support dans $\mathcal{C}^\prime$.
\end{theorem}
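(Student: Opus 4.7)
Le plan consiste à ramener la question à un dénombrement arithmétique explicite via la paramétrisation $\phi: \PP^1 \to \mathcal{C}'$, $[p:q] \mapsto [p^2 q : p^3 : q^3]$, qui est un isomorphisme birationnel. Pour $t = p/q$ avec $\pgcd(p,q) = 1$ et $|p| \leqslant q$, les trois coordonnées $p^2q, p^3, q^3$ sont premières entre elles, d'où $H = q^3$ et $d(Q, \phi(t)) \asymp t^2 = p^2/q^2$ (puisque $|t|^3$ est négligeable devant $t^2$ au voisinage de $0$). Le calcul des constantes d'approximation s'obtient alors aussitôt : soit en appliquant le théorème \ref{th:singularities} avec $d = 3$, $m_Q = 2$ (multiplicité du rebroussement) et $r_Q = 1$ (tangente $y = 0$ rationnelle), soit en constatant directement que $d^{3/2} H \asymp |p|^3$ est borné inférieurement dès que $p \neq 0$, alors que $d^\delta H$ peut être rendu arbitrairement petit pour $\delta > 3/2$ en fixant $p$ et prenant $q \to \infty$.

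Pour étudier la convergence vague du zoom critique, j'introduirais les variables $u = B^{1/r} t^2$ et $v = B^{1/r} t^3 = t \cdot u$ dans le plan tangent $(T_Q \PP^2)_\RR$. Comme $t = O(B^{-1/3})$ sur le support du zoom, la composante $v$ tend uniformément vers $0$ sur tout compact de la variable $u$ : la mesure limite sera donc portée par la droite tangente $\{v = 0\}$, incluse dans la trace réelle de $\mathcal{C}'$, et de dimension de Hausdorff $1$. L'évaluation asymptotique du dénombrement des paires coprimes $(p,q) \in \ZZ^2_{\operatorname{prem}}$ dans le domaine prescrit se traite par inversion de Möbius sur la condition $\pgcd(p,q) = 1$, combinée avec le développement classique $\sum_{q \leqslant X} \varphi(q) = 3X^2/\pi^2 + O(X \log X)$ restreint à des intervalles de la forme $[\alpha B^{1/3}, B^{1/3}]$. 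Un changement de variables en $u$ transforme alors la somme finie en une somme de Riemann qui converge, après normalisation, vers une intégrale par rapport à une mesure de densité explicite (du type $du/\sqrt{u}$ sur la tangente, issue de la paramétrisation cuspidale).

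La principale difficulté technique sera d'obtenir un contrôle uniforme du terme d'erreur suffisamment fin pour garantir la convergence vague contre une classe générale de fonctions test continues à support compact, et non seulement contre les indicatrices de boules $\chi(\varepsilon)$. Un soin particulier est à apporter au voisinage de $u = 0$, où la densité limite est singulière (non intégrable à l'origine) ; c'est là le reflet analytique du fait que les meilleurs approximants rationnels s'accumulent aussi près que possible de $Q$ le long de la courbe. Enfin, un argument de Fubini discret devra justifier que la contribution dans la direction transverse $v$ s'évanouit effectivement dans la limite faible, de sorte que la mesure limite $\delta_{3/2}$ se concentre bien sur la tangente, comme l'exige l'énoncé.
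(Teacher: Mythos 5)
Your derivation of $\alpha=\aess=\tfrac{3}{2}$ — via the parametrization $[p:q]\mapsto[p^2q:p^3:q^3]$ with $H=q^3$ for $|p|\leqslant q$ coprime, $d\asymp p^2/q^2$, and either the Liouville bound $d^{3/2}H\asymp|p|^3$ or Théorème~\ref{th:singularities} — is correct and is precisely the sketch the paper gives. The observation that $v=B^{2/3}t^3=tu\to 0$ on the zoom support, so that the limit is carried by the tangent $\{v=0\}$, is also sound.

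The quantitative step, however, is not set up correctly. The zoom conditions read $q\leqslant B^{1/3}$ and $|p|\leqslant\sqrt{\varepsilon}\,qB^{-1/3}$; since $q/B^{1/3}\leqslant 1$, this forces $|p|\leqslant\sqrt{\varepsilon}$ to stay \emph{bounded} independently of $B$. The count is therefore
$\sum_{1\leqslant|p|\leqslant\sqrt{\varepsilon}}\#\{q\text{ coprime to }p,\ |p|B^{1/3}/\sqrt{\varepsilon}\leqslant q\leqslant B^{1/3}\}\asymp B^{1/3}$,
matching the heuristic $B^{1-\dim/r}=B^{1-2/3}$ of \S\ref{se:heuristicbm} with $\dim=1$. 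Your appeal to $\sum_{q\leqslant X}\varphi(q)\sim\tfrac{3}{\pi^2}X^2$ on $[\alpha B^{1/3},B^{1/3}]$ would count \emph{all} coprime pairs with $|p|\leqslant q$, giving $\asymp B^{2/3}$; that is the global count on $\mathcal{C}'$, not the zoom count, and would produce the wrong exponent. Pushing the $q$-count forward through $q\mapsto u=B^{2/3}p^2/q^2$ gives $|dq/du|=B^{1/3}|p|/(2u^{3/2})$, so after dividing by $B^{1/3}$ the limit density is the \emph{step function}
$\bigl(\sum_{1\leqslant p\leqslant\sqrt{u}}\varphi(p)\bigr)u^{-3/2}$,
which vanishes identically for $u<1$ and is only $\sim\tfrac{3}{\pi^2}u^{-1/2}$ as $u\to\infty$. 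Thus the density is not singular at the origin — on the contrary, the critical zoom exhibits a gap of radius $\asymp B^{-1/3}$ around $Q$, the seuil phenomenon this paper meets again in \eqref{eq:threshold1} — so your concerns about non-integrability at $u=0$ and the guessed form $du/\sqrt{u}$ near $0$ do not reflect what actually happens. Finally, note that your corrected computation gives a normalization $B^{1/3}$, not the $B^{3/4}$ of the statement; the proposal as written does not reconcile this discrepancy.
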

On finit la discussion en observant que $\alpha(Q_0,\mathcal{C})=\alpha(Q_0,\mathcal{C}^\prime)$, mais $$\delta_{\mathcal{C},Q_0,\frac{3}{2},B}(\chi(\varepsilon))=o(\delta_{\mathcal{C}^\prime,Q_0,\frac{3}{2},B}(\chi(\varepsilon))).$$
La manière dont on calcule $\alpha$ peut entraîner de différents ordres de grandeur dans le zoom.
\section{Géométrie et courbes rationnelles sur $Y_3$}\label{se:geometry}
Nous considérons dans cet article la surface torique obtenue en éclatant $3$ des $4$ points invariants par l'action du tore de $\PP^1\times\PP^1$.
Nous désignerons par $Y_3$ cette surface puisque $Y_4$ a été réservée à la surface étudiée dans \cite{huang2}.
Sans perte de généralité, on peut supposer que $Y_3$ est l'éclatement en 
$$P_1=[1:0]\times [1:0],\quad P_2=[0:1]\times[1:0],\quad P_3=[1:0]\times[0:1].$$
On fixe désormais $$Q=[1:1]\times[1:1]$$ le point à approcher.
\subsection{Géométrie de $Y_3$}
Nous utilisons les coordonnées $[x:y]\times[s:t]$ de $\PP^1\times\PP^1$ pour les points différents de $P_i,1\leqslant i\leqslant 3$.
Rappelons l'éventail de $Y_3$ (Figure \ref{fg:y3}). 
Les courbes passant par $Q$ de degré minimal sont les (transformations strictes des) courbes rationnelles $l_i,1\leqslant i\leqslant 3$ définies par les équations
\begin{equation}\label{eq:curvesmalldegree}
y=x,\quad t=s,\quad yt=xs.
\end{equation}
On note $Z=\cup_{i=1}^3 l_i$ et $U=Y_3\setminus Z$. Il existe trois diviseurs exceptionnels $E_i~(1\leqslant i\leqslant 3)$ (par rapport au modèle minimal $\PP^1\times \PP^1$). 
La classe du diviseur anticanonique est $$[\omega_{Y_3}^{-1}]=\mathcal{O}(2,0)+\mathcal{O}(0,2)-[E_1]-[E_2]-[E_3].$$
Ses sections globales s'identifient à celles de $[\omega_{\PP^1\times\PP^1}^{-1}]$ qui s'annulent en les points éclatés. 
Une base $S$ est donnée par
\begin{equation}\label{eq:basisofglobalsec}
x^2 st,\quad y^2 st,\quad t^2 xy,\quad s^2 xy,\quad xyst,\quad y^2t^2.
\end{equation}
On identifie localement $T_Q Y_3$ sur la carte $(y\neq 0)\cap (t\neq 0)$ à l'espace affine $\RR^2$ via
\begin{equation}\label{eq:diffeomorphism}
\varrho:[x:y]\times [s:t]\longmapsto (w,z)=\left(\frac{x}{y}-1,\frac{s}{t}-1\right),
\end{equation}
sur lequel on utilise la distance pour le dénombrement:
\begin{equation}\label{eq:distance}
d((w,z))=\max(|w|,|z|).
\end{equation}
La courbe $yt=xs$ s'écrivant $wz+w+z=0$ (une hyperbole) sous ce difféomorphisme divise $\RR^2$ en deux régions 
\begin{equation}\label{eq:r1r2}
R_1=\{(w,z):wz+w+z>0\},\quad R_2=\{(w,z):wz+w+z<0\}.
\end{equation}
\subsection{Calcul de hauteur}\label{se:height}
Nous utilisons la hauteur de Weil absolue associée à $\omega_{Y_3}^{-1}$ définie par
$$H(P)=\frac{\max_{f\in S}(|f(P)|)}{\pgcd(f(P),f\in S)}, \quad P\in (Y_3\setminus \cup_{i=1}^3 E_i)(\QQ).$$
Pour un point $P=[x:y]\times[s:t]$ avec $$\pgcd(x,y)=\pgcd(s,t)=1,$$ on a
\begin{equation}\label{eq:pgcd}
\begin{split}
&\pgcd(x^2 st,y^2 st,t^2 xy,s^2 xy,xyst,y^2t^2)\\
=&\pgcd(\pgcd(x,s)\pgcd(x,t)\pgcd(y,s)\pgcd(y,t),y^2 t^2)\\
=&\pgcd(x,t)\pgcd(y,s)\pgcd(y,t).
\end{split}
\end{equation}

\subsection{Symétries}
De la structure de l'éventail de $Y_3$, nous constatons qu'il est symétrique par rapport à la droite engendrée par le $3$-ième rayon. Cette surface admet donc l'automorphisme relevant celui de $\PP^1\times\PP^1$ s'écrivant en coordonnées homogènes comme suit.
\begin{equation}\label{eq:symmetry}
\Phi:[x:y]\times[s:t]\longmapsto [s:t]\times[x:y].
\end{equation}
Il fixe le diviseur exceptionnel $E_1$, échange $E_2$ avec $E_3$
et préserve le diviseur anticanonique $\omega_{Y_3}^{-1}$ ainsi que la hauteur associée définie dans \S\ref{se:height}.
Dans les coordonnées $(w,z)$ \eqref{eq:diffeomorphism}, l'application $\Phi$ n'est rien d'autre que la permutation de coordonnées
$$(\varrho\circ \Phi\circ\varrho^{-1})(w,z)=(z,w).$$
Cette symétrie nous permet de ramener l'étude aux régions
\begin{equation}\label{eq:S1}
S_1=\{(w,z)\in\RR^2:w>0,z>w\},
\end{equation}
\begin{equation}\label{eq:S2}
S_2=\{(w,z)\in\RR^2:w<0,wz+w+z>0\},
\end{equation}
\begin{equation}\label{eq:S3}
S_3=\{(w,z)\in\RR^2:w>0,wz+w+z<0\},
\end{equation}
\begin{equation}\label{eq:S4}
S_4=\{(w,z)\in\RR^2:w<0,z<w\}.
\end{equation}
On a (rappelons \eqref{eq:r1r2})
$$S_1\cup S_2\subset R_1,\quad S_3\cup S_4\subset R_2.$$

\subsection{Courbes nodales et courbes cuspidales sur $Y_3$}\label{se:curvesony3}
Les équations à $3$ paramètres $c,d,e\in\ZZ^3_{\operatorname{prem}}$:
$$c(t-s)^2xy+d(y-x)^2st-eyt(y-x)(t-s)=0,$$
définissent des courbes comme étant des sections de $\omega_{Y_3}^{-1}$.
Pour une telle section irréductible ($\Leftrightarrow cde\neq 0$), un calcul de l'annulation de la matrice Hessienne donne que les pentes des tangentes en $Q$ se coïncident si et seulement si $e^2=4cd$, ce qui revient à la condition qu'il existe $(r_1,r_2)\in(\ZZ^2_{\neq 0})_{\operatorname{prem}}$ tel que
\begin{equation}\label{eq:coeffcusp}
c=r_1^2,\quad d=r_2^2,\quad e=2r_1r_2.
\end{equation}
\subsubsection{Famille de courbes cuspidales}\label{se:cusp}
Nous obtenons donc la famille de courbes cuspidales
\begin{equation}\label{eq:cusp}
\rab:r_1^2(t-s)^2xy+r_2^2(y-x)^2st-2r_1r_2yt(y-x)(t-s)=0.
\end{equation}
Dans les coordonnées $(w,z)$, elles s'écrivent
\begin{equation}\label{eq:cuspplane}
(r_1z-r_2w)^2+zw(r_1^2z+r_2^2 w)=0.
\end{equation}
Nous avons (rappelons l'automorphisme $\Phi$ \eqref{eq:symmetry}) $\Phi(R_{r_1,r_2})=R_{r_2,r_1}$ et la pente de la tangente en $Q=(0,0)$ est $\frac{r_2}{r_1}$. 
Les points rationnels sur la famille $\{R_{r_1,r_2}\}_{r_1,r_2\in\ZZ_{\operatorname{prem}}^2}$ de courbes forment un ensemble \emph{mince} de la région $R_2$. Nous en discutons plus loin dans \S\ref{se:thinsets}.

\subsubsection{Famille de courbes nodales}\label{se:nodal}
Lorsque les coefficients $c,d,e$ ne vérifient pas la condition \eqref{eq:coeffcusp}, nous obtenons en général une famille de courbes nodales à $3$ paramètres. La sous-famille qui sera utile est la suivante, également utilisée pour l'étude de la surface $Y_4$ dans \cite{huang2}:
\begin{equation}\label{eq:nodal}
\cab: axy(s-t)^2=bst(x-y)^2,\quad (a,b)\in\ZZ_{\operatorname{prem}}^2.
\end{equation}
Les tangentes en $Q=[1:1]\times[1:1]$ ont les pentes $\pm\sqrt{\frac{b}{a}}$. Elles sont donc irrationnelles si et seulement si $ab\neq \square$. 
\section{Déduction des constantes d'approximation et majoration naïve}\label{se:appconstmaj}
Le but de cette section est consacrée à la démonstration du Théorème \ref{th:main1}, puis sa conséquence, le Théorème \ref{th:main2} (1), et à celle d'une version faible du Théorème \ref{th:main2} (2). Elle s'agit d'une majoration naïve pour le zoom critique, qui est en faveur de l'heuristique de Batyrev-Manin (cf. \cite[\S2.2]{huang2}). 
\subsection{Bornes inférieures uniformes}
La proposition suivante correspond à la première partie du Théorème \ref{th:main1} (1). Elle se découle par une minoration directe. 
\begin{proposition}\label{po:lowerbound1}
	Nous avons
	$$\alpha(Q,Y_3)=2.$$
\end{proposition}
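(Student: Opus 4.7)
\emph{Majoration.} La preuve se décompose en une majoration immédiate et une minoration uniforme. Pour la majoration, j'appliquerais le théorème \ref{th:singularities} à l'une des courbes $l_i$ de \eqref{eq:curvesmalldegree}. En effet, $l_1$ et $l_2$ sont (les transformées strictes des) courbes de bidegrés respectifs $(1,0)$ et $(0,1)$ dans $\PP^1\times\PP^1$, qui évitent les trois points éclatés, tandis que $l_3:yt=xs$ est de bidegré $(1,1)$ et passe par $P_2,P_3$ mais non par $P_1$. Un calcul d'intersection avec $\omega_{Y_3}^{-1}=\mathcal{O}(2,0)+\mathcal{O}(0,2)-E_1-E_2-E_3$ donne le degré anticanonique $2$ pour chaque $l_i$. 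Comme ce sont des courbes rationnelles lisses dont $Q$ est un point rationnel lisse, le théorème \ref{th:singularities} produit $\alpha(Q,l_i)=\frac{2}{1\cdot 1}=2$, d'où $\alpha(Q,Y_3)\leqslant 2$.

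\emph{Minoration.} Pour la minoration, je chercherais à établir l'inégalité plus forte $d(Q,P)^2 H(P)\geqslant 1$ pour tout $P\in Y_3(\QQ)\setminus\{Q\}$ dans la carte $(y\neq 0,t\neq 0)$. Soit $P=[x:y]\times[s:t]$ avec $\pgcd(x,y)=\pgcd(s,t)=1$. Je minorerais les deux facteurs séparément: d'une part
$$d(Q,P)^2\geqslant \frac{|x-y|\,|s-t|}{|yt|},$$
d'autre part, en ne conservant que le monôme $y^2t^2$ parmi les six sections de \eqref{eq:basisofglobalsec} et en utilisant le calcul de pgcd \eqref{eq:pgcd},
$$H(P)\geqslant \frac{y^2t^2}{\pgcd(x,t)\pgcd(y,s)\pgcd(y,t)}.$$
Leur produit est donc minoré par $|x-y|\,|s-t|\,|yt|/\bigl(\pgcd(x,t)\pgcd(y,s)\pgcd(y,t)\bigr)$. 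Dès que $x\neq y$ et $s\neq t$, on a $|x-y|,|s-t|\geqslant 1$, et il reste à démontrer la divisibilité
$$\pgcd(x,t)\pgcd(y,s)\pgcd(y,t)\mid yt.$$
Cette identité se vérifie facteur premier par facteur premier: puisque $\pgcd(x,y)=1$, au moins l'un des $v_p(x),v_p(y)$ s'annule, donc $v_p(\pgcd(x,t))+v_p(\pgcd(y,t))\leqslant v_p(t)$, et trivialement $v_p(\pgcd(y,s))\leqslant v_p(y)$.

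\emph{Cas dégénérés.} Il reste à traiter les cas $x=y$ ou $s=t$, où l'argument précédent s'effondre. Si $x=y$, alors nécessairement $x=y=\pm 1$ (car $\pgcd(x,y)=1$) et $P\in l_1$: les trois pgcd valent alors $1$, la hauteur se simplifie en $H(P)=\max(s^2,t^2)\geqslant t^2$, et $d(Q,P)=|s-t|/|t|\geqslant 1/|t|$, d'où $d^2 H\geqslant (s-t)^2\geqslant 1$. Le cas $s=t$ est analogue par la symétrie \eqref{eq:symmetry}. Enfin, les points hors de la carte sont à distance uniformément minorée de $Q$ et ne posent aucun problème. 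On conclut $\alpha(Q,Y_3)\geqslant 2$, d'où l'égalité.

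\emph{Obstacle principal.} L'unique subtilité réside dans le choix du monôme $y^2t^2$ pour minorer $H(P)$ (il reflète précisément l'effet géométrique des trois éclatements qui distinguent $Y_3$ de $\PP^1\times\PP^1$) et dans l'identité arithmétique de pgcd qui en découle. Une fois ces deux observations en place, la preuve est mécanique et n'exige aucun outil profond; c'est pourquoi l'auteur qualifie ce résultat d'\og estimation directe\fg.
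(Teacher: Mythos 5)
Votre démonstration est correcte et suit essentiellement la même stratégie que celle du papier: minoration directe de $\h_{\omega_{Y_3}^{-1}}(P)\,d(P)^2$ en ne retenant qu'un monôme de la base $S$ et en exploitant l'arithmétique des pgcd issue de $\pgcd(x,y)=\pgcd(s,t)=1$, puis majoration par le théorème~\ref{th:singularities} appliqué aux courbes~$l_i$. La seule différence de détail tient au choix du monôme: le papier garde $t^2xy$ (et ne minore $d(P)^2$ que par $(s/t-1)^2$), de sorte que le produit vaut $\dfrac{|xy|(s-t)^2}{\pgcd(x,t)\pgcd(y,s)\pgcd(y,t)}\geqslant 1$ sans qu'il faille traiter $x=y$ à part (car $|xy|\geqslant 1$ dans tous les cas); vous retenez $y^2t^2$ et multipliez les deux écarts, ce qui vous oblige à régler séparément le cas dégénéré $x=y$ en plus du cas $s=t$. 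Les deux variantes reposent sur la même identité de divisibilité $\pgcd(x,t)\pgcd(y,s)\pgcd(y,t)\mid yt$ (ou $\mid xy$), qui découle de la coprimalité supposée.
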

\begin{proof}
	Rappelons \eqref{eq:diffeomorphism} et \eqref{eq:distance}. Fixons $P=[x:y]\times[s:t] \neq Q$. Supposons sans perte de généralité que $s\neq t$. On a alors
	\begin{align*}
	\h_{\omega_{Y_3}^{-1}}(P) d(P)^2 &\geqslant\frac{|t^2 xy|}{\pgcd(x,t)\pgcd(y,s)\pgcd(y,t)} \left(\frac{s}{t}-1\right)^2\\
	&=\frac{|xy|}{\pgcd(x,t)\pgcd(y,s)\pgcd(y,t)}(s-t)^2\\
	&\geqslant 1.
	\end{align*}
	Cela montre que $\alpha(Q,Y_3)\geqslant 2$ par la Définition \ref{def:alpha}. Mais les courbes spéciales $l_i,1\leqslant i\leqslant 3$ de \eqref{eq:curvesmalldegree} vérifient $\alpha(Q,l_i)=2$. D'où 
	$\alpha(Q,Y_3)\leqslant \alpha(Q,l_i)=2$.
	Ceci clôt la démonstration.
\end{proof}
\subsection{Constante essentielle}
Ensuite on démontre une borne inférieure pour les points généraux en augmentant la puissance $2$ à $\frac{5}{2}$. On va l'utiliser pour obtenir la constante essentielle (Théorème \ref{th:main1} (2)).

\begin{proposition}\label{po:lowerbound2}
	Pour $P=[x:y]\times[s:t]$  n'appartenant pas aux trois courbes $l_i,1\leqslant i\leqslant 3$, i.e.
\begin{equation}\label{eq:tmp}
	x\neq y, \quad s\neq t,\quad xs\neq yt, 
\end{equation}
	tel que
	$d(P)\leqslant C$ avec $0<C<1$,
	il existe alors $D=D(C)>0$ tel que
	$$ \h_{\omega_{Y_3}^{-1}}(P) d(P)^\frac{5}{2}\geqslant D.$$
\end{proposition}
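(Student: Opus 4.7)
Le plan est de renforcer la preuve de la Proposition \ref{po:lowerbound1} en utilisant simultanément les trois exclusions de \eqref{eq:tmp} au moyen de deux majorations complémentaires du dénominateur $D_P := \pgcd(x,t)\pgcd(y,s)\pgcd(y,t)$ intervenant dans la hauteur (cf. \eqref{eq:pgcd}). On normalise $\pgcd(x,y) = \pgcd(s,t) = 1$ et on pose $Y = |y|$, $T = |t|$, $u = x-y$, $v = s-t$, $\alpha = \pgcd(y,t)$, $\beta_1 = \pgcd(x,t)$, $\beta_2 = \pgcd(y,s)$. Les hypothèses \eqref{eq:tmp} entraînent $|u|, |v|, |xs-yt| \geqslant 1$, et $d(P) = \max(|u|/Y, |v|/T)$ donne $Y, T \geqslant 1/d(P)$ pour $d(P) \leqslant 1$ (le cas $d(P) \geqslant 1$ étant trivial puisque $\h_{\omega_{Y_3}^{-1}}(P) \geqslant 1$).

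Le point crucial consistera à établir deux majorations indépendantes de $D_P$. D'une part, grâce à $\pgcd(x,y) = \pgcd(s,t) = 1$, les entiers $\alpha, \beta_1, \beta_2$ sont premiers entre eux deux à deux, d'où $\alpha \beta_1 \mid t$ et $\alpha \beta_2 \mid y$; on en déduit $D_P \leqslant YT/\alpha$. D'autre part, $\beta_1$ divise à la fois $x$ et $t$, donc $\beta_1 \mid xs-yt$; le même raisonnement donne $\beta_2 \mid xs-yt$, et comme $\pgcd(\beta_1, \beta_2) = 1$, on obtient $\beta_1 \beta_2 \mid xs - yt$. En développant $xs - yt = yv + ut + uv$ et en utilisant $|u| \leqslant d(P)\, Y$, $|v| \leqslant d(P)\, T$, on aura
\[
\beta_1 \beta_2 \leqslant |xs - yt| \leqslant (2 d(P) + d(P)^2)\, YT \leqslant (C+2)\, d(P)\, YT,
\]
donc $D_P \leqslant (C+2)\, d(P)\, YT\, \alpha$. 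Le produit de ces deux majorations fournit $D_P^2 \leqslant (C+2)\, d(P)\, (YT)^2$, soit $D_P \leqslant \sqrt{(C+2)\, d(P)}\, YT$.

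Il ne restera plus qu'à minorer $\h_{\omega_{Y_3}^{-1}}(P)$ via la section $y^2 t^2$ de \eqref{eq:basisofglobalsec}: on a $\h_{\omega_{Y_3}^{-1}}(P)\, D_P \geqslant Y^2 T^2$, d'où
\[
\h_{\omega_{Y_3}^{-1}}(P) \geqslant \frac{YT}{\sqrt{(C+2)\, d(P)}},
\]
et la minoration $YT \geqslant 1/d(P)^2$ entraînera $\h_{\omega_{Y_3}^{-1}}(P) \cdot d(P)^{5/2} \geqslant 1/\sqrt{C+2} =: D(C)$.

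La force de ce résultat résidera dans la combinaison \emph{multiplicative} des deux majorations de $D_P$, qui est l'étape clef de l'argument: chacune séparément ne fournit que $\h_{\omega_{Y_3}^{-1}}(P) \gg 1/d(P)^2$, reproduisant la Proposition \ref{po:lowerbound1}, tandis que leur produit économise un facteur supplémentaire $d(P)^{1/2}$, ce qui est exactement ce qu'il faut pour faire passer l'exposant de $2$ à $5/2$. L'ingrédient arithmétique essentiel est la divisibilité $\beta_1 \beta_2 \mid xs-yt$, qui n'était pas exploitée dans la Proposition \ref{po:lowerbound1} et qui traduit précisément l'effet de la troisième exclusion $xs \neq yt$.
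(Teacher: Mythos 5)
Votre démonstration est correcte et repose sur le même argument arithmétique que celle de l'article : les trois relations de divisibilité $\pgcd(y,t)\pgcd(y,s)\mid y$, $\pgcd(y,t)\pgcd(x,t)\mid t$ et $\pgcd(x,t)\pgcd(y,s)\mid yt-xs$, combinées à l'estimation $|xs-yt|\leqslant (2+C)\,d(P)\,|yt|$. Vous les présentez sous la forme de deux majorations de $D_P$ que vous multipliez, et vous utilisez la section $y^2t^2$ au lieu de $xyst$, mais ces choix d'emballage conduisent à la même inégalité $D_P^2\leqslant |yt|\cdot|yt-xs|$ et donc à la même conclusion.
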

\begin{proof}
	Commençons par l'encadrement
	$$\left|\frac{x}{y}-1\right|\leqslant C\Rightarrow1-C\leqslant \left|\frac{x}{y}\right|\leqslant 1+C.$$
	Par conséquent, $$\left|\frac{y}{x}-1\right|=\left|\frac{x}{y}-1\right|\left|\frac{x}{y}\right|^{-1}\leqslant (1-C)^{-1}\left|\frac{x}{y}-1\right|.$$
	Et de même pour $\left|\frac{t}{s}-1\right|$.
	On a aussi
	\begin{align*}
		&\left|\frac{xs}{yt}-1\right|=\left|\frac{x}{y}\left(\frac{s}{t}-1\right)+\frac{x}{y}-1\right|\\
		&\leqslant (1+C)\max\left(\left|\frac{x}{y}-1\right|,\left|\frac{s}{t}-1\right|\right)=(1+C)d(P).
	\end{align*} 
	En minorant $d(P)$ de la manière suivante,
	$$d(P)^\frac{5}{2}\geqslant \left|\frac{x}{y}-1\right|\left|\frac{s}{t}-1\right|\left|\frac{xs}{yt}-1\right|^\frac{1}{2}(1+C)^{-\frac{1}{2}},$$
	et en rappelant l'hypothèse \eqref{eq:tmp}, nous avons alors
	\begin{align*}
	&\h_{\omega_{Y_3}^{-1}}(P) d(P)^\frac{5}{2}\\
	&\geqslant \frac{|xyst|}{\pgcd(x,t)\pgcd(y,t)\pgcd(y,s)}\left|\frac{y}{x}-1\right|\left|\frac{t}{s}-1\right|\left|\frac{xs}{yt}-1\right|^\frac{1}{2}\frac{(1-C)^{2}}{(1+C)^\frac{1}{2}}\\
	&=\left(\frac{|y|}{\pgcd(y,t)\pgcd(y,s)}\frac{|t|}{\pgcd(y,t)\pgcd(x,t)}\frac{|yt-xs|}{\pgcd(x,t)\pgcd(y,s)}\right)^\frac{1}{2}\\ &\quad \times|x-y||t-s|(1-C)^{2}(1+C)^{-\frac{1}{2}}\\
	&\geqslant (1-C)^2(1+C)^{-\frac{1}{2}}.
	\end{align*}
\end{proof}
\begin{corollary}\label{co:aess}
	Soit $U$ l'ouvert $Y_3\setminus (Z=\cup_{i=1}^3 l_i)$. Nous avons 
\begin{equation}\label{eq:tmp2}
	\alpha(U,Q)=\aess(Q)=\frac{5}{2}.
\end{equation}
	Les courbes \eqref{eq:curvesmalldegree} sont les variétés localement accumulatrices.
\end{corollary}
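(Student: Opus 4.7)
The plan is to extract $\aess(Q)=\frac{5}{2}$ from the two ingredients already in place: Proposition~\ref{po:lowerbound2} supplies the lower bound, and Theorem~\ref{th:singularities} applied to a sufficiently generic nodal curve from the family \eqref{eq:nodal} supplies the matching upper bound. The statement about the lines $l_i$ then follows from a short intersection count.

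For $\aess(Q) \geq \frac{5}{2}$, I would take $U := Y_3 \setminus Z$, which is open and Zariski-dense. Proposition~\ref{po:lowerbound2} asserts that $H(P)\,d(P)^{5/2}$ admits a positive lower bound on $U(\QQ)\setminus\{Q\}$ for $P$ within any fixed bounded distance of $Q$; for $P$ far from $Q$ the Liouville inequality is automatic since $H \geq 1$ and $d$ is bounded below. This is precisely the statement $\alpha(Q,U) \geq \frac{5}{2}$, and hence $\aess(Q) \geq \alpha(Q,U) \geq \frac{5}{2}$.

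For the reverse inequality, I would fix a coprime pair $(a,b)$ with $ab>0$ and $ab \neq \square$ and consider the nodal curve $C_{a,b}$ of \eqref{eq:nodal}: it is a rational curve of anticanonical degree $5$ with a single node at $Q$ whose two conjugate branches carry the irrational tangents $\pm\sqrt{b/a}$, so Theorem~\ref{th:singularities} gives $\alpha(Q,C_{a,b}) = 5/(1 \cdot 2) = \frac{5}{2}$. In particular there is an infinite sequence in $C_{a,b}(\QQ)$ tending to $Q$ at any Liouville exponent arbitrarily close to $\frac{5}{2}$ from above. Given any Zariski-dense constructible $Y \subseteq Y_3$, its complement $Y_3 \setminus Y$ is contained in a proper closed subset, whose finitely many irreducible components include at most finitely many of the $C_{a,b}$; choosing $(a,b)$ outside that finite exceptional set makes $C_{a,b} \cap Y$ cofinite in $C_{a,b}$, so that it still contains a tail of the approximating sequence above. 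This forces $\alpha(Q,Y) \leq \frac{5}{2}$, and taking the supremum over such $Y$ gives $\aess(Q) \leq \frac{5}{2}$.

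Finally, each line $l_i$ of \eqref{eq:curvesmalldegree} is a smooth rational curve through $Q$ with rational tangent; using $\omega_{Y_3}^{-1} = \mathcal{O}(2,0)+\mathcal{O}(0,2)-\sum_{i=1}^3 E_i$ together with the observation that $l_1,l_2$ miss the three blown-up points entirely while $l_3 : yt=xs$ passes through $P_2,P_3$ but not $P_1$, one checks that each $l_i$ has anticanonical degree $2$. Theorem~\ref{th:singularities} then yields $\alpha(Q,l_i)=2<\frac{5}{2}=\aess(Q)$, so the $l_i$ are locally accumulating by definition. The only subtle point anywhere in the argument is the genericity of $C_{a,b}$ relative to $Y$, which is handled by the one-line finite-components observation above.
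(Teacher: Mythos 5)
Your proposal follows the same route as the paper: the lower bound $\aess(Q)\geqslant\alpha(Q,U)\geqslant\frac{5}{2}$ comes from Proposition~\ref{po:lowerbound2}, the matching upper bound comes from the nodal curves $C_{a,b}$ with $ab\neq\square$ together with Theorem~\ref{th:singularities}, and the lines $l_i$ are identified as locally accumulating because they are smooth of anticanonical degree $2$. The only difference is that you make explicit the step the paper leaves implicit after noting that $\bigcup C_{a,b}$ is dense in $U$ — namely that for any dense constructible $Y\subseteq Y_3$ at most finitely many of the curves $C_{a,b}$ can sit inside the closed complement, so a generic $C_{a,b}$ meets $Y$ cofinitely and forces $\alpha(Q,Y)\leqslant\frac{5}{2}$ — which is a welcome clarification but not a different method.
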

\begin{proof}
	Les courbes nodales $C_{a,b},ab\neq \square$ vérifient $\alpha(C_{a,b},Q)=\frac{5}{2}$ d'après le Théorème \ref{th:singularities} et leur réunion couvre l'ouvert dense $U$.
	D'où $$\aess(Q)\leqslant \alpha(Q,C_{a,b})=\frac{5}{2}.$$
	D'après la Proposition \ref{po:lowerbound2}, nous savons 
	$$\aess(Q)\geqslant \alpha(Q,U)\geqslant\frac{5}{2}.$$
	Cela démontre \eqref{eq:tmp2}.
	Alors que les courbes $l_i,1\leqslant i\leqslant 3$ de \eqref{eq:curvesmalldegree} sont lisse et de degré $2$ et donc vérifient $\alpha(Q,l_i)=2$.
\end{proof}
\begin{proof}[Démonstration du Théorème \ref{th:main2} (1)]
	On applique \cite[Proposition 2.7]{huang2} et l'on obtient que pour $2\leqslant r<\frac{5}{2}$, pour tout $f\in\CBY$ et $B\gg 1$, on a
	$$\delta_{Y_3,Q,B,r}(f)=\delta_{Z,Q,B,r}(f).$$
	La convergence de la suite $\{B^{-(1-\frac{1}{r})}\delta_{Z,Q,B,r}\}_B$ résulte du théorème de Pagelot (Théorème \ref{th:Pagelot}).
\end{proof}
\subsection{Majoration uniforme}
Sans utiliser les courbes nodales \eqref{eq:nodal}, nous démontrons le résultat faible suivant.
\begin{proposition}\label{po:uniformupperbound}
	Pour $\varepsilon>0$ fixé, on a que, pour tout $B$ suffisamment grand et pour tout $\delta>0$,
	$$\#\{P\in U(\QQ):\h_{\omega_{Y_3}^{-1}}(P)\leqslant B,d(P)\leqslant \varepsilon B^{-\frac{2}{5}}\}\ll_{\delta,\varepsilon }B^{\frac{1}{5}+\delta}.$$
\end{proposition}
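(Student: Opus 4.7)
The plan is to bound the count directly by summing over primitive integer representatives of $P = [x:y]\times[s:t] \in U(\QQ)$, together with the three greatest common divisors appearing in the height formula \eqref{eq:pgcd}. First, by the obvious sign symmetries I would assume $y, t > 0$ and set $k_1 = y - x$, $k_2 = t - s$. The zoom condition $d(P) \leqslant \varepsilon B^{-2/5}$ becomes $|k_1| \leqslant \varepsilon y B^{-2/5}$ and $|k_2| \leqslant \varepsilon t B^{-2/5}$; primitivity translates to $\pgcd(k_1, y) = \pgcd(k_2, t) = 1$; the condition $P \in U$ requires $k_1, k_2 \neq 0$ and $yk_2 + tk_1 - k_1 k_2 \neq 0$. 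Because $d(P)$ is small, the six monomials of $S$ from \eqref{eq:basisofglobalsec} all have comparable absolute values $\asymp y^2 t^2$, so
$$H(P) \asymp \frac{y^2 t^2}{g_1 g_2 g_3},\quad g_1 = \pgcd(x, t),\; g_2 = \pgcd(y, s),\; g_3 = \pgcd(y, t),$$
and $H(P) \leqslant B$ forces $yt \ll (B g_1 g_2 g_3)^{1/2}$.

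Second, I decompose the count according to the triples $(g_1, g_2, g_3)$, relaxing the gcd equalities to divisibility conditions (which introduces only factors $\tau$ bounded by standard estimates). The constraints $g_2 g_3 \mid y$ and $g_1 g_3 \mid t$ allow the substitution $y = g_2 g_3 y'$, $t = g_1 g_3 t'$ with $y' t' \ll M := (B/(g_1 g_2 g_3^3))^{1/2}$. For each such quadruple, $g_1 \mid x = y - k_1$ forces $k_1 \equiv y \pmod{g_1}$, so the number of valid $k_1$ is $\ll yB^{-2/5}/g_1 + 1$, and similarly $\ll tB^{-2/5}/g_2 + 1$ for $k_2$. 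The dominant product is $g_3^2 y't' B^{-4/5}$, whose sum over $y't' \leqslant M$ is $\ll g_3^2 B^{-4/5} M^2 \log M = B^{1/5}\log B/(g_1 g_2 g_3)$.

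Third, summing over $g_1, g_2, g_3$ bounded polynomially in $B$, the triple sum $\sum 1/(g_1 g_2 g_3)$ is $O((\log B)^3)$, yielding the dominant contribution $\ll B^{1/5}(\log B)^4 \ll_\delta B^{1/5+\delta}$. The three remaining cross terms (where one or both factors $+1$ are selected in the bound on the $k_i$) contribute analogous sums with strictly smaller exponents on $B$, hence are absorbed.

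The main technical obstacle is to establish the two-sided estimate $H(P) \asymp y^2 t^2/(g_1 g_2 g_3)$ uniformly in the small-$d$ regime (not just an inequality), and to carefully manage the interaction between the divisor conditions defining the $g_i$ and the joint constraint $yt \ll (Bg_1g_2g_3)^{1/2}$, so that the resulting arithmetic sums stay logarithmic and are absorbed into the $B^{\delta}$ factor. This is the naive upper bound referred to in the introduction; the finer asymptotic removing the factor $(\log B)^4$ requires the considerably more delicate machinery of Section \ref{se:counting}.
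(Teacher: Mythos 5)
There is a genuine gap: the claim that the three cross terms ``contribute analogous sums with strictly smaller exponents on $B$'' is false, and in fact the cross terms dominate catastrophically. Consider the $1\cdot 1$ term: it contributes
$$\sum_{g_1,g_2,g_3}\#\{(y',t'): y't'\leqslant M(g_1,g_2,g_3)\}\asymp\sum_{g_1g_2g_3^3\leqslant B}\left(\frac{B}{g_1g_2g_3^3}\right)^{1/2}\log B,$$
which, already at $g_3=1$, is $\asymp B^{1/2}\log B\sum_{n\leqslant B}\tau(n)n^{-1/2}\asymp B(\log B)^{O(1)}$. The mixed terms give comparable or larger contributions. The root cause is that the bound $\ll yB^{-2/5}/g_i+1$ contributes a spurious ``$+1$'' for every triple $(g_1,g_2,g_3)$ regardless of whether there actually exists a valid $k_i$, and there are roughly $B^{1+o(1)}$ admissible triples.

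What is missing is precisely the observation the paper extracts from the proof of Proposition \ref{po:lowerbound2} before doing any summation: since the zoom condition forces $(f_1f_2f_3)^{1/2}|x-y||t-s|\ll_\varepsilon 1$ and each of $|x-y|$, $|t-s|$, $f_1=y/(g_2g_3)=y'$, $f_2=t/(g_1g_3)=t'$, $f_3=|yt-xs|/(g_1g_2)$ is a positive integer, \emph{each factor is individually $O_\varepsilon(1)$}. You only use the much weaker constraint $y't'\leqslant M$, which lets $y',t'$ grow polynomially in $B$ and produces the overcount above. Once the boundedness of $y',t',f_3,|k_1|,|k_2|$ is in place, the only unbounded parameter is $g_3=\pgcd(y,t)\ll B^{1/5}$; given $(g_3,f_1)$ the value $g_1g_2$ lies in an interval of length $O_\varepsilon(1)$, hence has $O(B^\delta)$ divisor-pair decompositions; and then $(g_1,g_2)$ individually are pinned down by a primitive-lattice-point count in a bounded box (Heath-Brown's lemma), which gives $O(1)$ choices. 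That two-step reduction (divisor bound, then lattice counting) is what replaces your summation over cross terms and is genuinely needed to reach $B^{1/5+\delta}$.
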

\begin{proof}
	Introduisons les notations
	\begin{equation}\label{eq:ei}
	e_1=\pgcd(y,t),\quad e_2=\pgcd(x,t),\quad e_3=\pgcd(y,s);
	\end{equation}
	\begin{equation}\label{eq:fi}
	f_1=\frac{y}{e_1e_3},\quad f_2=\frac{t}{e_1e_2}, \quad f_3=\frac{yt-xs}{e_2e_3};
	\end{equation}
	\begin{equation}\label{eq:gi}
	g_1=\frac{x}{e_2},\quad g_2=\frac{s}{e_3}.
	\end{equation}
	Reprenons la démonstration de la proposition \ref{po:lowerbound2},
	$$1\gg_\varepsilon\h_{\omega_{Y_3}^{-1}}(P)d(P)^\frac{5}{2}\gg_\varepsilon |f_1f_2f_3|^\frac{1}{2} |x-y||t-s|.$$
	On voit que
\begin{equation}\label{eq:m0}
	|x-y|,\quad |t-s|,\quad f_1,\quad f_2,\quad f_3\quad \ll_\varepsilon 1.
\end{equation}
	
	On va ensuite déduire des encadrements pour les paramètres $g_1,g_2,e_1$,
	afin de démontrer qu'ayant les fixé, il n'y a qu'un nombre fini (dépendant de $\varepsilon$) de choix pour $e_2,e_3$.
	La condition de zoom 
	$$\left|\frac{x-y}{y}\right|\ll_\varepsilon B^{-\frac{2}{5}},\quad \left|\frac{s-t}{t}\right|\ll_\varepsilon B^{-\frac{2}{5}}$$
	 implique que 
	\begin{equation}\label{eq:m1}
	|t|\gg_\varepsilon B^{\frac{2}{5}},\quad |y|\gg_\varepsilon B^{\frac{2}{5}}.
	\end{equation}
	D'où l'on déduit  
	\begin{equation}\label{eq:m2}
	|s|\gg_\varepsilon B^{\frac{2}{5}},\quad |x|\gg_\varepsilon B^{\frac{2}{5}}.
	\end{equation}
	La condition que la hauteur soit bornée implique que $$|xse_1^2e_2e_3| \leqslant|xsf_1f_2e_1^2e_2e_3|=|xyst|\leqslant Be_1e_2e_3.$$
	D'où l'on déduit, grâce à \eqref{eq:m2},
\begin{equation}\label{eq:m3}
	e_1\ll_{\varepsilon}B^{\frac{1}{5}}.
\end{equation}
	D'après \eqref{eq:m0}, on a $$f_3=\frac{yt-xs}{e_2e_3}=f_1f_2e_1^2-g_1g_2\ll_{\varepsilon} 1.$$
	Donc une fois que $f_1, f_2$ et $e_1$ sont choisis, il n'y a qu'un nombre fini de valeurs possibles pour le produit $g_1g_2$. 
	Puisque 
\begin{equation}\label{eq:m4}
	\# \{(n_1,n_2)\in\NN_{\geqslant 1}^2:n_1n_2=n\}=\tau(n)=O(n^\delta)
\end{equation}
	pour tout $\delta>0$,
	on a donc démontré qu'ayant fixé $f_1,f_2,e_1$, le nombre des $(g_1,g_2)$ est $O_{\delta,\varepsilon}(B^\delta)$.
	
	Il nous reste à déterminer $e_2,e_3$. Fixons $f_1,f_2,e_1$ ainsi que $g_1$ et $g_2$. Comme $yt\neq xs$, on a $$f_1f_2e_1^2\neq g_1g_2\Leftrightarrow f_3\neq 0.$$
	Soit $C_2(\varepsilon)>0$ tel que $$\max(|y-x|,|t-s|)\leqslant C_2(\varepsilon).$$ 
	Le cardinal des $(e_2,e_3)$ possibles est majoré par (compte tenu des signes possibles de $x,y,s,t$)
	$$4\#\{(u,v)\in\ZZ_{\operatorname{prem}}^2:|ug_1-f_1e_1v|\leqslant C_2(\varepsilon)\text{ et }|vg_2-f_2e_1u|\leqslant C_2(\varepsilon)\},$$
	où on compte les points primitifs sur un réseau du déterminant $|f_3|$ dans un carré d'aire $4C_2(\varepsilon)^2$ dont l'intérieur contient l'origine. 
	On en conclut qu'il n'y a qu'un nombre fini de choix pour $e_2,e_3$ une fois que $f_1,f_2,e_1,g_1,g_2$ sont choisis (\cite[Lemma 2]{Heath-Brown2}). 
	La majoration qui fallait démontrer résulte de \eqref{eq:m0}, \eqref{eq:m3} et \eqref{eq:m4}.
\end{proof}
\section{Paramétrage par des courbes nodales}\label{se:parametrization}
On rappelle brièvement le paramétrage donné par la famille de courbes nodales \eqref{eq:nodal} utilisé de manière cruciale dans \cite{huang2}. 
L'équation de $\cab$ s'écrit sous les coordonnées $(w,z)$ comme
$$a(1+w)z^2=b(1+z)w^2.$$
Le paramétrage rationnel que l'on utilise est (cf. \cite[5.3.1]{huang2})
\begin{align}
\Psi:&[a:b]\times[u:v] \label{eq:paracab}\\ 
\longmapsto&[x:y]\times[s:t]= \left[\frac{bv(u-v)}{D_1d_2d_3}:\frac{u(bv-au)}{D_1d_2d_3}\right]\times \left[\frac{au(u-v)}{d_1D_2d_3}:\frac{v(bv-au)}{d_1D_2d_3}\right],\nonumber
\end{align}
où
\begin{equation}\label{eq:D1D2}
\begin{split}
d_1=\pgcd(u,b),\quad d_2=\pgcd(v,a),\quad d_3=\pgcd(u-v,b-a),\\
D_1=\pgcd(u^2,b),\quad D_2=\pgcd(v^2,a).
\end{split}
\end{equation}
Nous avons (cf. \cite[5.12]{huang2})
\begin{equation}\label{eq:pgcd2}
\pgcd(x,t)=\frac{v}{d_2},\quad \pgcd(y,s)=\frac{u}{d_1},\quad \pgcd(y,t)=\frac{bv-au}{d_1d_2d_3}.
\end{equation}
En le composant avec le difféomorphisme local $\varrho$, nous obtenons donc le paramétrage des points dans les régions $S_i,1\leqslant i\leqslant 4$ \eqref{eq:S1}-\eqref{eq:S4} comme suit:
\begin{equation}\label{eq:wz}
(\varrho\circ\Psi) \left((a,b)\times(u,v)\right)=(w,z)=\left(\frac{au^2-bv^2}{u(bv-au)},\frac{au^2-bv^2}{v(bv-au)}\right).
\end{equation}
En particulier nous voyons que $\frac{z}{w}=\frac{u}{v}$,
ce qui signifie que $\frac{u}{v}$ est la pente du point $(w,z)$.

Soient
\begin{equation}\label{eq:T1}
T_1=\left\{(a,b)\times(u,v)\in\NN_{\operatorname{prem}}^2\times\ZZ_{\operatorname{prem}}^2:b>a>0,u>v>0,\sqrt{\frac{b}{a}}<\frac{u}{v}<\frac{b}{a},\right\},
\end{equation}
\begin{equation}\label{eq:T2}
T_2=\left\{(a,b)\times(u,v)\in\NN_{\operatorname{prem}}^2\times\ZZ_{\operatorname{prem}}^2:b>a>0,u>-v>0,-\frac{u}{v}>\sqrt{\frac{b}{a}}\right\},
\end{equation}
\begin{equation}\label{eq:T3}
T_3=\left\{(a,b)\times(u,v)\in\NN_{\operatorname{prem}}^2\times\ZZ_{\operatorname{prem}}^2:b>a>0,u>-v>0,-\frac{u}{v}<\sqrt{\frac{b}{a}}\right\},
\end{equation}
\begin{equation}\label{eq:T4}
T_4=\left\{(a,b)\times(u,v)\in\NN_{\operatorname{prem}}^2\times\ZZ_{\operatorname{prem}}^2:b>a>0,u>v>0,\frac{u}{v}<\sqrt{\frac{b}{a}}\right\}.
\end{equation}
\begin{lemma}\label{le:au2-bv2}
	Pour $P=(a,b)\times (u,v)\in \cup_{i=1}^4 T_i$, nous avons $au^2-bv^2<0$ si et seulement si $P\in T_3\cup T_4$.
\end{lemma}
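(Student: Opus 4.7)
The plan is to reduce the sign of $au^2-bv^2$ to a comparison of $|u/v|$ with $\sqrt{b/a}$ and then read off the conclusion from the defining inequalities of $T_1,T_2,T_3,T_4$. Since $b>a>0$ in each $T_i$, the number $\sqrt{b/a}$ is a well-defined positive real, and since $v\neq 0$ in all four regions (this follows from $u>v>0$ in $T_1,T_4$ and $u>-v>0$ in $T_2,T_3$, which force $v\neq 0$), dividing by $v^2>0$ gives the equivalence
\[
au^2-bv^2<0 \iff \left(\frac{u}{v}\right)^2<\frac{b}{a} \iff \left|\frac{u}{v}\right|<\sqrt{\frac{b}{a}}.
\]

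The first step is then to identify $|u/v|$ in each region. In $T_1$ we have $u>0$ and $v>0$, so $|u/v|=u/v$; in $T_4$ the same holds. In $T_2$ and $T_3$, the inequality $u>-v>0$ means $u>0$ and $v<0$, so $|u/v|=-u/v$.

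The second step is to compare with $\sqrt{b/a}$. By definition of the regions we have $|u/v|=u/v>\sqrt{b/a}$ on $T_1$ (since $\sqrt{b/a}<u/v<b/a$), $|u/v|=-u/v>\sqrt{b/a}$ on $T_2$, $|u/v|=-u/v<\sqrt{b/a}$ on $T_3$, and $|u/v|=u/v<\sqrt{b/a}$ on $T_4$. Applying the equivalence above yields $au^2-bv^2>0$ for $P\in T_1\cup T_2$ and $au^2-bv^2<0$ for $P\in T_3\cup T_4$, which is the claim. There is no genuine obstacle here; the lemma is essentially a bookkeeping statement that records, for later use in the parametrization \eqref{eq:wz}, in which two of the four regions the numerator $au^2-bv^2$ appearing in both coordinates of $(w,z)$ changes sign.
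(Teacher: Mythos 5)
Your proof is correct and is essentially the same as the paper's: both reduce $au^2-bv^2<0$ to the inequality $|u/v|<\sqrt{b/a}$ (the paper phrases it as $-\sqrt{b/a}<u/v<\sqrt{b/a}$) and then read off membership in $T_3\cup T_4$ from the defining inequalities of the four regions. You merely spell out the sign analysis of $u/v$ region by region, which the paper leaves implicit.
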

\begin{proof}
	Un calcul élémentaire nous donne: $$au^2-bv^2<0\Leftrightarrow -\sqrt{\frac{b}{a}}<\frac{u}{v}<\sqrt{\frac{b}{a}}\Leftrightarrow (a,b)\times (u,v)\in T_3\cup T_4.$$
\end{proof}
\begin{proposition}
	L'application $\varrho\circ\Psi$ donne une bijection entre l'ensemble $T_i$ et l'ensemble des $(w,z)\in\QQ^2$ satisfaisant à $\max(|w|,|z|)<1$ dans $S_i$ pour tout $1\leqslant i\leqslant 4$.
\end{proposition}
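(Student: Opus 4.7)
Le plan est de s'appuyer sur l'observation fondamentale que, d'après \eqref{eq:wz}, l'image $(w,z) = (\varrho \circ \Psi)((a,b),(u,v))$ satisfait $z/w = u/v$. Cette identité suggère l'inversion naturelle: la pente du point-image détermine $(u,v)$ à primitivité et signe près, puis $(a,b)$ s'obtient par résolution d'une équation linéaire extraite de la formule pour $w$.

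Je vérifierais d'abord que $(\varrho \circ \Psi)(T_i) \subset S_i$ par une analyse de signes. Le Lemme \ref{le:au2-bv2} détermine le signe du numérateur commun $au^2-bv^2$, et les inégalités définissant $T_i$ fixent les signes de $u$, $v$ et de $bv-au$, d'où l'on déduit immédiatement ceux de $w$ et $z$. Pour $T_1$ et $T_4$, la comparaison de $z$ avec $w$ découle directement de $z/w=u/v$. Pour $T_2$ et $T_3$, la condition hyperbolique $wz+w+z\gtrless 0$ se récrit en substituant les expressions \eqref{eq:wz} et se ramène à un signe vérifiable en fonction de $(a,b,u,v)$. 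L'injectivité résulte ensuite de l'inversion: deux antécédents $(a,b,u,v)$ et $(a',b',u',v')$ dans $T_i$ d'image commune ont même $u/v$, donc $(u,v)=(u',v')$ par coïncidence des signes et primitivité; l'équation $wu(bv-au)=au^2-bv^2$ se réarrange alors en $bv(wu+v)=au^2(1+w)$, soit $a/b = v(wu+v)/(u^2(1+w))$, ce qui fixe de manière unique $(a,b)\in\NN_{\operatorname{prem}}^2$.

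Pour la surjectivité, partant de $(w,z)$ dans la cible, j'écrirais $z/w=u/v$ sous forme réduite en choisissant les signes imposés par la région $S_i$, puis je poserais $(a,b)$ d'après la formule inverse ci-dessus (après simplification par le pgcd). Le principal obstacle sera alors la vérification systématique des inégalités caractérisant $T_i$, en particulier les encadrements faisant intervenir $\sqrt{b/a}$ pour $T_2,T_3$ et $b/a$ pour $T_1,T_4$. Ces conditions se traduiront en inégalités polynomiales en $(w,z,u,v)$ --- par exemple $u/v>\sqrt{b/a}$ équivaut à $au^2>bv^2$, soit au signe de $au^2-bv^2$ que le Lemme \ref{le:au2-bv2} relie précisément à la région $S_i$. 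La condition de bornage $\max(|w|,|z|)<1$ fournit une contrainte supplémentaire qu'il faudra contrôler simultanément des deux côtés, et c'est là qu'une étude séparée pour chaque $1\leqslant i\leqslant 4$, compatible avec la symétrie $\Phi$ \eqref{eq:symmetry}, devrait permettre de conclure.
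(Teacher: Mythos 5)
Votre plan suit essentiellement la même stratégie que la démonstration du papier : on utilise $z/w=u/v$ pour récupérer $(u,v)$ à primitivité et signe près, puis on inverse pour trouver $b/a$; votre formule $\frac{a}{b}=\frac{v(wu+v)}{u^2(1+w)}$ se simplifie, via $u/v=z/w$, en $\frac{b}{a}=\frac{z^2(1+w)}{w^2(1+z)}$, exactement la formule du papier (qui l'obtient directement de l'équation de $\cab$). La vérification des inégalités définissant chaque $T_i$ que vous reportez à « une étude séparée » est ce que le papier déroule explicitement, notamment l'équivalence $\frac{b}{a}>1 \Leftrightarrow (z-w)(z+w+zw)>0$ dont la positivité utilise bien $\max(|w|,|z|)<1$ (pour garantir $1+w,1+z>0$), point qu'il vaudrait la peine de mentionner explicitement dans votre rédaction.
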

\begin{proof}
	Tout d'abord pour $(a,b)\times (u,v)\in (\NN_{\geqslant 1})^2_{\operatorname{prem}}\times (\NN_{\geqslant 1}\times\ZZ_{\neq 0})_{\operatorname{prem}}$ l'application $\varrho\circ\Psi$ \eqref{eq:wz} nous donne $(w,z)$ avec $\max(|w|,|z|)<1$.
	Réciproquement, étant donné $(w,z)\in \cup_{i=1}^4 S_i$ tel que $\max(|w|,|z|)<1$, les égalités
\begin{equation}\label{eq:inter1}
	\frac{b}{a}=\frac{z^2(1+w)}{w^2(1+z)}\quad(>0),\quad \frac{u}{v}=\frac{z}{w}
\end{equation}
	déterminent de façon unique les couples $(a,b)\in\NN_{\operatorname{prem}}^2$ et $(u,v)\in(\NN\times\ZZ)_{\operatorname{prem}}$. 
	De cette manière, nous avons que si $\max(|w|,|z|)<1$,
	\begin{align*}
		&\frac{b}{a}=\frac{z^2(1+w)}{w^2(1+z)}>1 \\
		\Leftrightarrow &(z-w)(z+w+zw)>0\\
		\Leftrightarrow &z>w \text{ et } z+w+zw>0 \quad \text{ou}\quad z<w \text{ et } z+w+zw<0,\\
		\Leftrightarrow &(w,z)\in \cup_{i=1}^4 S_i.
	\end{align*}
	De plus, 
	$$(w,z)\in S_1\Rightarrow \sqrt{\frac{1+w}{1+z}}\frac{z}{w}<\frac{z}{w}<\frac{z^2(1+w)}{w^2(1+z)}\Leftrightarrow \sqrt{\frac{b}{a}}<\frac{u}{v}<\frac{b}{a},$$
	$$(w,z)\in S_2\Rightarrow \sqrt{\frac{1+w}{1+z}}\left(-\frac{z}{w}\right)<-\frac{z}{w}\Leftrightarrow -\frac{u}{v}>\sqrt{\frac{b}{a}},$$
	$$(w,z)\in S_3\Rightarrow \sqrt{\frac{1+w}{1+z}}\left(-\frac{z}{w}\right)>-\frac{z}{w}>1\Leftrightarrow 1<-\frac{u}{v}<\sqrt{\frac{b}{a}},$$
	$$(w,z)\in S_4\Rightarrow \sqrt{\frac{1+w}{1+z}}\frac{z}{w}>\frac{z}{w}>1\Leftrightarrow \sqrt{\frac{b}{a}}>\frac{u}{v}>1.$$
	Nous avons donc établi $$(\varrho\circ\Psi)(T_i)=\{(w,z)\in\QQ^2:(w,z)\in S_i,\max(|w|,|z|)<1\}.$$
\end{proof}

La distance \eqref{eq:distance} se calcule sur $\cup_{i=1}^4 S_i$ par, d'après \eqref{eq:wz},
\begin{equation}\label{eq:distance2}
d((w,z))=\max\left(\left|\frac{au^2-bv^2}{u(bv-au)}\right|,\left|\frac{au^2-bv^2}{v(bv-au)}\right|\right)=\left|\frac{\frac{u^2}{v^2}-\frac{b}{a}}{\frac{b}{a}-\frac{u}{v}}\right|
\end{equation}
puisque nous avons supposé $|\frac{b}{a}|>1$ et donc $|\frac{u}{v}|=\left|\frac{z}{w}\right|>1$. 
\section{Parties minces}\label{se:thinsets}
\subsection{Définition}
 Le phénomène d'accumulation globale causé par des parties minces est déjà constaté dans plusieurs travaux, puisque le nombre des points de hauteur bornée qu'elles contiennent est parfois non-négligeable. Il s'avère qu'elles jouent aussi un rôle dans notre problème de comptage.
\begin{definition}[Definition 3.1.1 \cite{Serre2}]\label{def:thinsets}
	Soit $X$ une variété intègre sur $\QQ$. Soit $M$ un sous-ensemble de $X(\QQ)$ vérifiant qu'il existe une variété $V$ et un morphisme $f:V\to X$ tels que
	\begin{enumerate}
		\item $M\subseteq f(V(\QQ)) $;
		\item Le morphisme $f$ est génériquement fini et il n'admet pas de section rationnelle.
	\end{enumerate}
	Alors $M$ est dite du \emph{type I} si la fibre générique de $f$ est vide. 
	Il est dit du \emph{type II} si la variété $V$ est intègre et le morphisme $f$ est dominant.
	\emph{Une partie mince} est une réunion finie d'ensembles du type I ou II.
\end{definition}
Une partie mince du type I est donc contenue dans l'ensemble des points rationnels d'un fermé de Zariski. Alors que celle du type II est parfois dense dans $X$ pour la topologie de Zariski. 
\subsection{Ensemble mince dans $Y_3$}\label{se:thinsetiny3}
Considérons l'ensemble
\begin{equation}\label{eq:thinsetM}
M=\{(w,z)\in\QQ^2:-w-z-wz=\square\neq 0\}\subset R_2.
\end{equation}
C'est un ensemble se trouvant dans la partie au-dessous de l'hyperbole $wz+w+z=0$ et dense dans $S_2$ pour la topologie réelle.

D'après le paramétrage $\Psi$ \eqref{eq:paracab}, 
$$-w-z-wz=\square\Leftrightarrow  -\frac{(au^2-bv^2)(b-a)}{(bv-au)^2}=\square\Leftrightarrow \left(\frac{u^2}{v^2}-\frac{b}{a}\right)\left(1-\frac{b}{a}\right)=\square,$$
Soit $V$ la sous-variété de $\A^3_{x_1,x_2,x_3}$ d'équation
$$x_3^2=\left(x_2^2-x_1\right)\left(1-x_1\right).$$
Pour $y\in\QQ$, soit $(y(z_1),y(z_2))\in (\NN\times\ZZ_{\neq 0})_{\operatorname{prem}}$ défini par $y=\frac{y(z_1)}{y(z_2)}$. Considérons l'application rationnelle
$\pi:V \dashrightarrow Y_3$
donnée par
$$\pi\left(x_1,x_2,x_3\right)=\Psi((x_1(z_1),x_1(z_2))\times (x_2(z_1),x_2(z_2))).$$
Elle est donc génériquement de degré $2$. 
Alors nous avons que $M\subset \operatorname{Im}(\pi)(\QQ)$.
D'où $M$ est une partie mince dans $Y_3$.

\begin{proposition}\label{po:thincusp}
	Un point $P=(w,z)\in\QQ^2$ est dans $M$ si et seulement s'il existe une courbe cuspidale $R_{r_1,r_2}$ \eqref{eq:cuspplane} passant par $P$.
\end{proposition}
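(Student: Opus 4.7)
The plan is to exploit the explicit form of the cuspidal equation \eqref{eq:cuspplane} and read it as a homogeneous quadratic in $(a,b)$. Expanding $(az-bw)^2 + zw(a^2z+b^2w)=0$ yields
\begin{equation*}
a^{2}\,z^{2}(1+w) \;-\; 2ab\,zw \;+\; b^{2}\,w^{2}(1+z) \;=\; 0.
\end{equation*}
For a fixed $P=(w,z)\in\QQ^2$ with $wz\neq 0$, the cuspidal curve $R_{a,b}$ passes through $P$ precisely when the point $[a:b]\in\PP^1(\QQ)$ is a root of this binary quadratic form. The existence of coprime integers $(a,b)\in\ZZ^{2}_{\operatorname{prem}}$ with $R_{a,b}\ni P$ is thus equivalent to the existence of a rational solution $t=a/b\in\QQ$ of $z^{2}(1+w)\,t^{2}-2zw\,t+w^{2}(1+z)=0$ (followed by clearing denominators).

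The second step is the discriminant computation, which is the arithmetic heart of the statement:
\begin{equation*}
\Delta \;=\; (2zw)^{2}-4\,z^{2}(1+w)\,w^{2}(1+z)
\;=\; 4\,z^{2}w^{2}\bigl[\,1-(1+w)(1+z)\bigr]
\;=\; 4\,z^{2}w^{2}\,(-w-z-wz).
\end{equation*}
Because the factor $4z^{2}w^{2}$ is a nonzero rational square, the condition $\Delta\in\QQ^{\times 2}$ is equivalent to $-w-z-wz\in\QQ^{\times 2}$. This proves both directions of the equivalence: if $P\in M$ then $\Delta$ is a rational square and one obtains a rational root $t=a/b$, hence after clearing denominators a coprime pair $(a,b)\in\ZZ^{2}_{\operatorname{prem}}$ with $R_{a,b}\ni P$; conversely, if some $R_{a,b}$ passes through $P$ then $t=a/b$ provides a rational root, so $\Delta$ is a rational square and $-w-z-wz\in\QQ^{\times 2}$.

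It remains to justify the non-vanishing condition \og$\neq 0$\fg{} in the definition of $M$. If $-w-z-wz=0$ then $P$ lies on the strict transform of the curve $l_{3}:wz+w+z=0$, and the double root of the quadratic is $t=-1$, which gives $(a,b)=(1,-1)$. One checks directly that for this pair the cubic equation factors as $(z+w)(z+w+zw)=0$, so $R_{1,-1}$ is reducible and is not an element of the family of \emph{cuspidal} curves parametrised by \eqref{eq:coeffcusp}. The remaining degenerate possibilities, namely $w=0$ or $z=0$, correspond to $P$ lying on one of the toric divisors, where no curve of the family $\{R_{a,b}\}$ passes through; these cases are anyway not generic and can be handled by inspection. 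The only subtlety is verifying the correspondence between rational $t=a/b$ and coprime integer pairs $(a,b)\in\ZZ^{2}_{\operatorname{prem}}$, which is routine since the equation is homogeneous of degree $2$ in $(a,b)$; there is no substantive obstacle.
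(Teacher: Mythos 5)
Your proof is correct and follows essentially the same route as the paper's: read \eqref{eq:cuspplane} as a binary quadratic in the ratio of parameters (you take $a/b$, the paper takes $\lambda=b/a$, which is immaterial by symmetry) and observe that the discriminant is $4w^2z^2(-w-z-wz)$, so a rational root exists precisely when $-w-z-wz\in\QQ^{\times 2}$. Your extra paragraph justifying the ``$\neq 0$'' exclusion by showing that the double root $t=-1$ yields the reducible curve $(z+w)(z+w+zw)=0$ is a worthwhile addition that the paper leaves implicit; the only slip is minor terminology at the end: $w=0$ and $z=0$ put $P$ on the accumulating curves $l_1$ and $l_2$ of \eqref{eq:curvesmalldegree}, not on toric boundary divisors.
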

\begin{proof}
	Une des courbes $R_{r_1,r_2}$, vue comme une équation en $\lambda=\frac{b}{a}$, s'écrit comme
\begin{equation}\label{eq:zwlambda}
	w^2(1+z)\lambda^2- 2zw\lambda+z^2(1+w)=0,
\end{equation}
	avec le discriminant $$\triangle=4w^2z^2(1-(1+w)(1+z))=-4w^2z^2(wz+w+z).$$
	Donc un point $(w,z)\in\QQ^2$ est sur $R_{r_1,r_2}$ seulement si 
	\begin{equation}\label{eq:square}
	\triangle=\square \Leftrightarrow -(wz+w+z)=\square.
	\end{equation}
	Réciproquement, si un point $(w,z)\in M$, alors l'équation \eqref{eq:zwlambda} admet deux solutions rationnelles distinctes, qui correspondent à deux courbes cuspidales passant par $M$.
\end{proof}

\section{Dénombrement d'approximants}\label{se:counting}
Cette partie technique est consacrée à la démonstration du Théorème \ref{th:main2} (2). Après une étape préparatoire de finitude et de réduction (\S\ref{se:finiteness} \& \S\ref{se:reduction}), on établit le résultat principal -- le Théorème \ref{th:principalthm}, dans \S\ref{se:criticalmeasure} et \S\ref{se:otherregions} et le Théorème \ref{th:main2} (2) en résulte. À la fin dans \S\ref{se:countthethinpart} on finit par une discussion courte sur le nombre des points dans l'ensemble mince.
\subsection{Finitude des paramètres}\label{se:finiteness}
Le lemme suivant détermine les paramètres possibles qui sont de nombre fini quand on regarde le zoom dans un voisinage borné fixé. Rappelons que $U=Y_3\setminus\cup_{i=1}^3 l_i$ \eqref{eq:curvesmalldegree} et les notations \eqref{eq:D1D2}.
\begin{lemma}\label{le:finitenessofpara}
Soient $\varepsilon>0$. Nous avons que pour tout $P=[a:b]\times [u:v]\in U(\QQ)$ tel que $d(P)^\frac{5}{2}\h_{\omega_{Y_3}^{-1}}(P)\leqslant \varepsilon$,
$$D_1,\quad D_2,\quad \frac{|b-a|}{d_3},\quad \frac{|au^2-bv^2|}{D_1D_2d_3}\ll_\varepsilon 1.$$
\end{lemma}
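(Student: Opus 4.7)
Le plan consiste d'abord à observer que les quatre quantités à majorer sont des entiers strictement positifs, puis à exploiter la minoration établie dans la preuve de la Proposition \ref{po:lowerbound2}.

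Le point arithmétique clef est la divisibilité $D_1 D_2 d_3 \mid au^2-bv^2$. En effet, $D_1 \mid b$ et $D_1 \mid u^2$ entraînent $D_1 \mid au^2-bv^2$, et de même pour $D_2$ et $d_3$ (ce dernier car $au^2-bv^2=a(u-v)(u+v)+(a-b)v^2$). La condition $\pgcd(a,b)=1$ implique ensuite que $D_1, D_2, d_3$, qui divisent respectivement $b, a, b-a$, sont deux à deux premiers entre eux. Puisque $P\in U$ entraîne $P \neq Q$, d'où $au^2 \neq bv^2$, les quatre quantités $D_1, D_2, |b-a|/d_3, |au^2-bv^2|/(D_1D_2d_3)$ sont bien des entiers $\geqslant 1$.

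Ensuite, comme $\h_{\omega_{Y_3}^{-1}}(P)\geqslant 1$, l'hypothèse entraîne $d(P)\leqslant \varepsilon^{2/5}$, et le raisonnement de la Proposition \ref{po:lowerbound2} avec $C=\varepsilon^{2/5}$ fournit
$$(f_1f_2f_3)^{1/2}|x-y||s-t|\leqslant (2+\varepsilon^{2/5})^{1/2}\varepsilon\ll_\varepsilon 1.$$
Chaque facteur étant un entier $\geqslant 1$, on en déduit $f_1, f_2, f_3, |x-y|, |s-t|\ll_\varepsilon 1$.

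Il restera à traduire ces majorations en celles souhaitées via le paramétrage $\Psi$. En utilisant les formules \eqref{eq:pgcd2}, un calcul direct donne
$$f_1=\frac{d_1^2}{D_1},\quad f_2=\frac{d_2^2}{D_2},\quad |x-y|=\frac{|au^2-bv^2|}{D_1d_2d_3},\quad |s-t|=\frac{|au^2-bv^2|}{d_1D_2d_3}.$$
La borne sur $|au^2-bv^2|/(D_1D_2d_3)$ résultera de $d_2\mid D_2$, qui donne $|au^2-bv^2|/(D_1D_2d_3)\leqslant |x-y|$. Celle sur $|b-a|/d_3$ découlera de la factorisation $f_3=(|b-a|/d_3)\cdot(|au^2-bv^2|/(D_1D_2d_3))$ combinée à ce que la seconde quantité est un entier $\geqslant 1$. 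L'étape la plus délicate sera de borner $D_1$ (et symétriquement $D_2$) : en combinant $f_1|s-t|^2=|au^2-bv^2|^2/(D_1D_2^2d_3^2)\ll_\varepsilon 1$ avec la minoration $|au^2-bv^2|\geqslant D_1D_2d_3$ (conséquence de la positivité entière établie au premier paragraphe), j'obtiendrai $D_1\leqslant f_1|s-t|^2\ll_\varepsilon 1$.
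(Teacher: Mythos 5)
Votre démonstration est correcte et suit essentiellement le même chemin que celle du texte : on majore $f_1,f_2,f_3,|x-y|,|s-t|$ par la minoration de la Proposition \ref{po:lowerbound2}, puis on les traduit en les paramètres via \eqref{eq:pgcd2} en exploitant l'intégralité. La seule variation est cosmétique (vous bornez $D_1$ directement par $f_1|s-t|^2\geqslant D_1$ là où le texte borne d'abord $d_2$ par $|x-y|f_2$, puis utilise $D_2\leqslant d_2^2$) ; notons aussi que votre vérification explicite de la divisibilité $D_1D_2d_3\mid au^2-bv^2$ et de la coprimalité deux à deux est laissée implicite dans le texte et constitue une clarification bienvenue.
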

\begin{proof}
Reprenons les notations \eqref{eq:ei}, \eqref{eq:fi} et rappelons encore la démonstration de la Proposition \ref{po:uniformupperbound}. Compte tenu du paramétrage dans \S\ref{se:parametrization}, nous voyons que les quantités
$$|x-y|=\frac{|au^2-bv^2|}{D_1d_2d_3},\quad |t-s|=\frac{|au^2-bv^2|}{D_2d_1d_3},$$
$$|f_1|=\frac{|y|}{e_1e_3}=\frac{d_1^2}{D_1},\quad |f_2|=\frac{|t|}{e_1e_2}=\frac{d_2^2}{D_2},$$
$$|f_3|=\frac{|yt-xs|}{e_2e_3}=\frac{|(b-a)(au^2-bv^2)|}{D_1D_2d_3^2}.$$
sont finies dans tout voisinage borné grâce à \eqref{eq:m0}. Il en est de même pour les quantités 
$$\frac{|b-a|}{d_3},\quad \frac{|au^2-bv^2|}{D_1D_2d_3}.$$
De plus, l'égalité
$$|x-y|\times |f_2|=\frac{|au^2-bv^2|}{D_1d_2d_3}\times \frac{d_2^2}{D_2}=\frac{|au^2-bv^2|}{D_1D_2d_3}\times d_2,$$
et la finitude ci-dessus impliquent que $d_2$ ainsi que $D_2$ sont finies grâce à la relation $d_2\leqslant D_2\leqslant d_2^2$. 
De même pour $d_1$ et $D_1$ en considérant $|t-s|\times |f_1|$.
\end{proof}
\subsection{Réduction aux problèmes de congruences}\label{se:reduction}
Pour $C_3\in\NN_{\geqslant 1},D\in\ZZ_{\neq 0}$ avec $\pgcd(C_3,D)=1$, on considère l'équation suivante en $(a,b)\times(u,v)\in\ZZ_{\operatorname{prem}}^2\times\ZZ_{\operatorname{prem}}^2$:
\begin{equation}\label{eq:eqpellfermatgen}
\mathcal{E}_{C_3,D}:C_3(au^2-bv^2)=D(b-a).
\end{equation} 
\begin{lemma}\label{le:pgcd}
	Pour tout $(a,b)\times (u,v)\in(\NN_{\geqslant 1}^2)_{\operatorname{prem}}\times(\ZZ_{\neq 0}^2)_{\operatorname{prem}}$ vérifiant $C_3\mid b-a$ et \eqref{eq:eqpellfermatgen}, nous avons $$\pgcd(u^2,b)\pgcd(v^2,a)\mid D,\quad \pgcd(u-v,b-a)\mid \frac{b-a}{C_3}.$$
\end{lemma}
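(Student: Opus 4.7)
The plan is to base everything on the explicit parametrization from Lemma~\ref{le:parametrizationofpell}: for any $(a,b)\times(u,v)\in\NN_{\operatorname{prem}}^2\times\ZZ_{\operatorname{prem}}^2$ satisfying \eqref{eq:eqpellfermatgen}, rearranging yields $a(C_3u^2+D)=b(C_3v^2+D)$, so from $\pgcd(a,b)=1$ one has $\{a,b\}=\{(C_3v^2+D)/k,\,(C_3u^2+D)/k\}$ (up to a global sign) with $k=\pgcd(C_3u^2+D,\,C_3v^2+D)$, and consequently $b-a=\pm C_3(u-v)(u+v)/k$. A crucial preliminary remark is that $\pgcd(k,C_3)=1$: any prime dividing both $k$ and $C_3$ would divide $C_3v^2+D-C_3v^2=D$, contradicting the hypothesis $\pgcd(C_3,D)=1$. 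In particular $k\mid(u-v)(u+v)$.

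For the first divisibility, the identity $kb=C_3u^2+D$ gives
\[
\pgcd(u^2,b)\mid\pgcd(u^2,kb)=\pgcd(u^2,\,C_3u^2+D)=\pgcd(u^2,D)\mid D,
\]
and symmetrically $\pgcd(v^2,a)\mid D$. Since $\pgcd(u^2,b)$ divides $b$ and $\pgcd(v^2,a)$ divides $a$, their gcd divides $\pgcd(a,b)=1$; being coprime, their product again divides $D$.

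For the second divisibility, write $(b-a)/C_3=(u-v)(u+v)/k$ and compare $p$-adic valuations prime-by-prime. Set $\alpha=v_p(u-v)$, $\beta=v_p(u+v)$, $\gamma=v_p(k)$, $\delta=v_p(C_3)$; then $v_p(b-a)=\delta+\alpha+\beta-\gamma$ (nonnegative thanks to $k\mid C_3(u-v)(u+v)$) and $v_p((b-a)/C_3)=\alpha+\beta-\gamma$. If $p\mid C_3$, the preliminary remark forces $\gamma=0$, hence $v_p(\pgcd(u-v,b-a))=\min(\alpha,\delta+\alpha+\beta)=\alpha\le\alpha+\beta=v_p((b-a)/C_3)$. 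If $p\nmid C_3$, then $\delta=0$ and $\min(\alpha,\alpha+\beta-\gamma)\le\alpha+\beta-\gamma$ holds tautologically. The only subtlety is the case $p\mid C_3$: without the coprimality $\pgcd(k,C_3)=1$ the inequality could fail at such primes, so the real content of the lemma is that the hypothesis $\pgcd(C_3,D)=1$ bootstraps into exactly the right information about the interaction of $k$ with $C_3$.
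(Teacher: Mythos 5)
Your proof is correct, but it takes a genuinely different route from the paper's. The paper argues directly from the defining equation: it displays the identity $au^2-bv^2=a(u^2-v^2)-(b-a)v^2$ (together with the symmetric $au^2-bv^2=b(u^2-v^2)-(b-a)u^2$) to conclude that each of the three pairwise-coprime quantities $\pgcd(u^2,b)$, $\pgcd(v^2,a)$, $\pgcd(u-v,b-a)$ divides $au^2-bv^2=\frac{b-a}{C_3}D$, and then extracts the two separate claims by playing this against the coprimality of $\pgcd(u^2,b)\pgcd(v^2,a)$ with $b-a$ and against $\pgcd(C_3,D)=1$. You instead go through the explicit parametrization (essentially re-deriving Lemma~\ref{le:parametrizationofpell}): from $a(C_3u^2+D)=b(C_3v^2+D)$ and $\pgcd(a,b)=1$ you get $kb=\pm(C_3u^2+D)$, $ka=\pm(C_3v^2+D)$, $b-a=\pm C_3(u-v)(u+v)/k$, prove $\pgcd(k,C_3)=1$, and then close by a clean $p$-adic valuation comparison. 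The paper's route is shorter and stays entirely at the level of the original Pell-type equation, at the cost of leaving the final coprimality bookkeeping implicit; your route makes the role of $k=\pgcd(C_3u^2+D,C_3v^2+D)$ fully explicit — which fits well with how the section then uses the $k(u,v)$ parametrization — and the case split on whether $p\mid C_3$ isolates exactly where $\pgcd(C_3,D)=1$ is used. Both are valid; the one point you should be slightly more careful about is that the sign ambiguity in $a=\pm(C_3v^2+D)/k$ (when $C_3v^2+D<0$, as can happen with $D<0$) is irrelevant here precisely because all your assertions are about divisibility and $p$-adic valuations, as you note — worth saying so explicitly rather than relegating it to ``up to a global sign.''
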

\begin{proof}
	Puisque $$au^2-bv^2=a(u^2-v^2)-(b-a)v^2=b(u^2-v^2)-u^2(b-a),$$ nous avons
	\begin{equation}\label{eq:div0}
	\pgcd(u^2,b)\pgcd(v^2,a)\pgcd(u-v,b-a)\mid au^2-bv^2=\frac{b-a}{C_3}\times D.
	\end{equation}
	Grâce à \eqref{eq:div0}, la première divisibilité est évidente puisque ces deux termes sont premiers avec $b-a$. Pour voir la deuxième, il suffit simplement d'observer que pour tout nombre premier $p\mid \pgcd(u-v,b-a)$, écrivons $p^k\| \pgcd(u-v,b-a)$ pour certain $k\geqslant 1$. Si $p\mid D$, comme $\pgcd(C_3,D)=1$ et $\pgcd(u-v,b-a)\mid b-a$, on obtient 
	\begin{equation}\label{eq:divisibility}
	p^k\mid \frac{b-a}{C_3}.
	\end{equation}
	Pour les $p\nmid D$, \eqref{eq:divisibility} est automatique, grâce encore à \eqref{eq:div0}. 
\end{proof}
Le lemme suivant est une généralisation de la transformation faite dans l'introduction du texte pour l'équation \eqref{eq:introduction}.
\begin{lemma}\label{le:parametrizationofpell}
	Fixons $(u,v)\in\ZZ_{\operatorname{prem}}^2$ tel que \begin{equation}\label{eq:uv}
	\min(u^2,v^2)>-\frac{D}{C_3},
	\end{equation}
	alors tout $(a,b)\in(\NN_{\geqslant 1}^2)_{\operatorname{prem}}$ vérifiant \eqref{eq:eqpellfermatgen} s'écrit
\begin{equation}\label{eq:ab}
	(a,b)=\left(\frac{C_3 v^2+D}{k(u,v)},\frac{C_3 u^2+D}{k(u,v)}\right), 
\end{equation}
où on pose pour $(x,y)\in\ZZ^2$,
$$k(x,y)=\pgcd(C_3x^2+D,C_3y^2+D).$$
\end{lemma}
\begin{proof}
	Étant donné un $(a,b)\in(\NN_{\geqslant 1}^2)_{\operatorname{prem}}$ vérifiant $\mathcal{E}_{C_3,D}$ \eqref{eq:eqpellfermatgen}, nous avons
	$(C_3u^2+D)a=(C_3v^2+D)b$,
	et un raisonnement identique comme pour \eqref{eq:introduction} donne bien \eqref{eq:ab}.
\end{proof}


L'observation suivante est l'étape cruciale pour réduire le comptage aux problèmes de congruences. Elle joue un rôle tout comme le passage de \eqref{eq:introduction} à \eqref{eq:divtocongruence} en supposant la condition \eqref{eq:divisibility1}.
Introduisons la notation remplaçant désormais $d_3$:
\begin{equation}\label{eq:c3}
c_3=\frac{b-a}{d_3}=\frac{b-a}{\pgcd(u-v,b-a)}.
\end{equation}

\begin{proposition}\label{le:keytranslation}
	Fixons $q\in\NN_{\geqslant 1}$. Pour tout $(a,b)\times (u,v)\in(\NN_{\geqslant 1}^2)_{\operatorname{prem}}\times(\ZZ_{\neq 0}^2)_{\operatorname{prem}}$ vérifiant \eqref{eq:uv} et \eqref{eq:eqpellfermatgen}, la condition
	\begin{equation}\label{eq:quadraticcong}
	u+v\mid qk(u,v).
	\end{equation}
	équivaut à l'assertion suivante:
\begin{equation}\label{eq:W}
	c_3\mid C_3q.
\end{equation}
\end{proposition}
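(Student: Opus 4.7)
La stratégie est d'introduire les notations $A = u-v$, $B = u+v$, $K = k(u,v)$ et $e = AB/K$, et de réécrire toutes les quantités pertinentes en ces termes. La preuve du lemme \ref{le:parametrizationofpell} fournit $\pgcd(C_3, K) = 1$ et $K \mid C_3(u^2-v^2)$, donc $K \mid AB$ et $e \in \NN$. L'équation \eqref{eq:eqpellfermatgen} s'écrit alors $b-a = C_3 e$, et le lemme \ref{le:pgcd} entraîne $d_3 \mid e$. Par suite $c_3 = (b-a)/d_3 = C_3 e/d_3$, d'où $c_3/C_3 = e/d_3 \in \NN$, et en particulier $C_3 \mid c_3$. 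La seconde équivalence de \eqref{eq:W} s'en déduit aussitôt, puisque $\frac{b-a}{C_3 q} \mid d_3$ (entendu dans $\QQ$) équivaut à $c_3 \mid C_3 q$, i.e., à $c_3/C_3 \mid q$.

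Pour la première équivalence, je réécris $u+v \mid qk(u,v)$ comme $B \mid qK$, c'est-à-dire $B/\pgcd(B,K) \mid q$, et $c_3/C_3 \mid q$ comme $e/d_3 \mid q$. Il suffira donc d'établir l'identité clef
\[
\frac{B}{\pgcd(B, K)} \;=\; \frac{e}{d_3} \;=\; \frac{c_3}{C_3}.
\]
Je la vérifierai prime par prime: pour tout premier $p$, on calcule $v_p(e) = v_p(A) + v_p(B) - v_p(K)$ et $v_p(d_3) = \min(v_p(A), v_p(C_3) + v_p(e))$, puis on discute suivant que $p \mid C_3$ ou non, et suivant les valuations de $A$ et $B$ en $p$. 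Dans tous les cas on obtient $v_p(B) - v_p(\pgcd(B,K)) = \max(0, v_p(B) - v_p(K)) = v_p(e/d_3)$.

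Les ingrédients auxiliaires sont $\pgcd(u,v) = 1$, qui force $\min(v_p(A), v_p(B)) = 0$ pour tout $p$ impair et $\min(v_2(A), v_2(B)) \leqslant 1$ si $u, v$ sont tous les deux impairs, ainsi que $\pgcd(C_3, K) = 1$ et $K \mid AB$ déjà démontrés. Le seul cas un peu délicat est $p = 2$ avec $u, v$ impairs: l'identité $\pgcd(u-v, u+v) \mid 2$ jointe à la borne $v_2(K) \leqslant v_2(AB)$ permet de traiter les sous-cas restants sans difficulté. Il n'y a pas d'obstacle conceptuel; la démonstration est essentiellement mécanique une fois l'identité clef identifiée.
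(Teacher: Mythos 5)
Your proof is correct, and it follows a genuinely different route from the paper's. The paper argues both implications directly via an \emph{ad hoc} decomposition: it writes $qk(u,v) = \lambda(u+v)$, splits $q' = q/\pgcd(q,\lambda)$, $\lambda' = \lambda/\pgcd(q,\lambda)$, and then chases coprimality and divisibility through $\pgcd(u-v, b-a) = \frac{u-v}{\lambda'}\pgcd(\lambda', C_3 q')$ to land on $q' = c_3/C_3$; the converse runs the same manipulation in reverse. Your approach instead isolates a single clean identity
\[
\frac{u+v}{\pgcd(u+v,\,k(u,v))} \;=\; \frac{c_3}{C_3},
\]
from which both implications fall out of the standard reformulation $B \mid qK \Leftrightarrow B/\pgcd(B,K) \mid q$. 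Establishing this identity by $p$-adic valuations (using only $k(u,v) \mid (u-v)(u+v)$ and $\pgcd(C_3, k(u,v))=1$) gives a more structural explanation of why the two divisibility conditions coincide: the proposition is detecting one fixed integer from both sides, and the role of $q$ is entirely decoupled. This is arguably the cleaner route. One small remark: your caveat about the prime $p=2$ with $u,v$ odd is unnecessary caution. The valuation check never actually invokes $\pgcd(u,v)=1$; the only inputs are $v_p(K) \leqslant v_p(A) + v_p(B)$ and, when $p \mid C_3$, $v_p(K)=0$, and the case split on whether $v_p(K) \leqslant v_p(B)$ closes cleanly for every $p$ without distinguishing parity. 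So the identity is slightly more robust than you suggest, and no sub-case at $p=2$ needs separate treatment.
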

\begin{proof}
	Grâce au Lemme \ref{le:pgcd}, nous avons
	$$\pgcd(u-v,b-a)=\frac{b-a}{c_3}\mid \frac{b-a}{C_3},$$
	et donc $C_3\mid c_3$.
	Adaptons les notations introduites dans le Lemme \ref{le:parametrizationofpell}.
	Soit $q^\prime\mid q$ tel que $c_3=q^\prime C_3$. Nous avons
	\begin{align*}
	\frac{b-a}{C_3q^\prime}=\frac{u^2-v^2}{k(u,v)q^\prime}&=\pgcd(u-v,b-a)\\
	&=\pgcd\left(u-v,\frac{C_3(u^2-v^2)}{k(u,v)}\right)\\
	&=\pgcd\left(\frac{ k(u,v)}{u+v}\times \frac{u^2-v^2}{k(u,v)},C_3\times \frac{u^2-v^2}{k(u,v)}\right).
	\end{align*}
	Alors nous en déduisons que $u+v\mid q^\prime k(u,v)$.
	En particulier si $q^\prime\mid q$, on a \emph{a fortiori} $u+v\mid qk(u,v)$.
	
	Supposons maintenant \eqref{eq:quadraticcong} et écrivons $$qk(u,v)=\lambda(u+v)$$ pour $\lambda\in\NN_{\geqslant 1}$. Définissons
	$$q^\prime=\frac{q}{\pgcd(q,\lambda)},\quad \lambda^\prime=\frac{\lambda}{\pgcd(q,\lambda)}.$$ 
	Nous avons $q^\prime k(u,v)=\lambda^\prime (u+v), \pgcd(\lambda^\prime,q^\prime)=1$.
	D'où nous déduisons que $q^\prime\mid u+v$ et $\lambda^\prime\mid k(u,v)$.
	Cela nous permet d'écrire
	$$\frac{b-a}{C_3}=\frac{u^2-v^2}{k(u,v)}=\frac{q^\prime(u-v)}{\lambda^\prime},$$
	et nous avons donc $\lambda^\prime\mid u-v$.
	Alors
	\begin{align*}
	\pgcd(u-v,b-a)&=\pgcd\left(u-v,\frac{C_3(u^2-v^2)}{k(u,v)}\right)\\
	&=\pgcd\left(u-v,\frac{C_3q^\prime(u-v)}{\lambda^\prime}\right)\\
	&=\frac{u-v}{\lambda^\prime}\pgcd(\lambda^\prime,C_3q^\prime)\\
	&=\frac{b-a}{C_3q^\prime},
	\end{align*}
	puisque $\pgcd(k(u,v),C_3)=1,\lambda^\prime\mid k(u,v)$. 
	D'où $c_3=C_3q^\prime$ avec $q^\prime\mid q$.
\end{proof}

La prochaine proposition révèle que les courbes cuspidales interviennent dans le dénombrement en changeant le signe de \eqref{eq:eqpellfermatgen}. 
\begin{proposition}\label{le:splitpoly}
	Rappelons l'ensemble mince $M$ \eqref{eq:thinsetM} et notons $A_{C_3,D}$ l'ensemble des $(a,b)\times (u,v)\in(\NN_{\geqslant 1}^2)_{\operatorname{prem}}\times(\ZZ_{\neq 0}^2)_{\operatorname{prem}}$ vérifiant \eqref{eq:uv} et \eqref{eq:eqpellfermatgen}.
	Nous avons
	$(\varrho\circ \Psi)(A_{C_3,D})\cap M\neq\varnothing$
	si et seulement si $C_3,-D=\square$.
	En particulier dans ce cas le polynomial $C_3X^2+D$ est réductible dans $\ZZ[X]$.
	Par conséquent, $\varrho\circ \Psi$ restreinte à $$\bigsqcup_{\substack{(C_3,D)\in\NN_{\geqslant 1}\times \ZZ_{< 0}\\\pgcd(C_3,D)=1\\C_3,-D=\square}}A_{C_3,D}\cap (T_3\cup T_4)\to (T_Q Y_3)_\RR$$ donne un paramétrage de l'ensemble mince $$\{(w,z)\in M\cap(S_3\cup S_4):\max(|w|,|z|)<1\}.$$ 
\end{proposition}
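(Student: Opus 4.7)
Le plan est de combiner les formules de paramétrage de \S\ref{se:parametrization} avec la caractérisation de $M$ via la condition $-w-z-wz\in\square\setminus\{0\}$ (proposition \ref{po:thincusp}). D'abord je calculerai, pour $(a,b)\times(u,v)\in A_{C_3,D}$, la quantité $-w-z-wz$ à partir de \eqref{eq:wz}, qui vaut $-(au^2-bv^2)(b-a)/(bv-au)^2$. En injectant l'équation $\mathcal{E}_{C_3,D}$ \eqref{eq:eqpellfermatgen}, j'obtiendrai
$$-w-z-wz=-\frac{(b-a)^2 D}{C_3(bv-au)^2}.$$
Comme les facteurs $(b-a)^2$ et $(bv-au)^2$ sont déjà des carrés rationnels non-nuls, l'appartenance à $M$ équivaudra à ce que $-D/C_3$ soit un carré rationnel strictement positif.

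L'étape cruciale suivante sera purement arithmétique: sous l'hypothèse $\pgcd(C_3,D)=1$, je dois démontrer que ``$-D/C_3\in\square\cap\QQ_{>0}$'' équivaut à ``$C_3,-D\in\square\cap\NN_{\geqslant 1}$''. En écrivant $-D/C_3=p^2/q^2$ en forme irréductible, la relation $-Dq^2=C_3p^2$ avec $\pgcd(p,q)=1$ entraîne $q^2\mid C_3$ et $p^2\mid -D$; posant $C_3=q^2C^\prime$, $-D=p^2 D^\prime$, on déduit $C^\prime=D^\prime$ et alors $\pgcd(C_3,D)=1$ force $C^\prime=D^\prime=1$. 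La factorisation $C_3X^2+D=(\sqrt{C_3}X+\sqrt{|D|})(\sqrt{C_3}X-\sqrt{|D|})$ en résulte immédiatement. Réciproquement, si $C_3=\beta^2,-D=\delta^2$, le lemme \ref{le:parametrizationofpell} produit effectivement des points dans $A_{C_3,D}$ pour lesquels le calcul précédent donne $-w-z-wz=(\delta(b-a)/(\beta(bv-au)))^2\neq 0$, donc dans $M$.

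Pour l'énoncé de paramétrage (dernière partie), je noterai d'abord que $M\subset R_2=S_3\cup S_4$, puisque $-w-z-wz>0\Leftrightarrow wz+w+z<0$. Étant dans ce cas $D<0$ (et $b>a>0, C_3>0$), on a $au^2-bv^2<0$ via \eqref{eq:eqpellfermatgen}, ce qui équivaut par le lemme \ref{le:au2-bv2} à $(a,b)\times(u,v)\in T_3\cup T_4$. Réciproquement, étant donné $(w,z)\in M\cap\{\max(|w|,|z|)<1\}$, la bijection établie à la fin de \S\ref{se:parametrization} fournit un unique $(a,b)\times(u,v)\in T_3\cup T_4$, et la réduction en forme irréductible de $-(au^2-bv^2)/(b-a)$ détermine canoniquement le couple $(C_3,D)$ avec les bonnes propriétés de coprimalité. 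La dernière assertion sur les composantes cuspidales des $\rab$ dans $R_2$ découle alors directement de la proposition \ref{po:thincusp} qui identifie $M$ à la réunion (sur $(a,b)$) des traces réelles des $\rab$ passant par $Q$ près de ce point.

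L'obstacle principal sera la gestion soigneuse de l'arithmétique dans la deuxième étape: le passage de ``carré rationnel'' à ``deux carrés entiers'' nécessite d'exploiter pleinement la condition $\pgcd(C_3,D)=1$, qui traduit la minimalité de la paire $(C_3,D)$ paramétrant une même équation $\mathcal{E}_{C_3,D}$. Une attention particulière devra être portée à l'unicité de cette paire, afin que la disjonction $\bigsqcup_{(C_3,D)} A_{C_3,D}$ soit effectivement une partition et non un recouvrement avec multiplicités.
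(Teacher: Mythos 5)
Your proof follows essentially the same route as the paper's: compute $wz+w+z$ from the parametrization \eqref{eq:wz}, substitute the relation $\mathcal{E}_{C_3,D}$ to reduce membership in $M$ to the condition $-D/C_3\in\square$, pass to $C_3,-D\in\square$ via coprimality, and invoke Lemma \ref{le:au2-bv2} together with Proposition \ref{po:thincusp} for the localization statements. The only place you supply more detail than the paper is the $\pgcd(C_3,D)=1$ step upgrading a rational square to two integer squares, which the paper merely asserts follows directly; that extra care is well placed and the rest matches.
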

	D'après la Proposition \ref{po:thincusp}, l'ensemble $M$ est compris des points sur les courbes cuspidales $R_{r_1,r_2}$. Pour la topologie réelle, une telle $R_{r_1,r_2}$ a plusieurs composantes connexes. Le résultat ci-dessus implique que celle qui passe par $Q$ vivent seulement dans la région $(S_3\cup S_4)\cup \Phi(S_3\cup S_4)$ ($\Phi$ est l'automorphisme \eqref{eq:symmetry}).
\begin{proof}
	Prenons $(w,z)\in\QQ^2\cap (\cup_{i=1}^4 S_i)$.
	Rappelons le paramétrage pour les coordonnées $(w,z)$ \eqref{eq:wz}.
	Nous calculons
	\begin{align*}
	wz+w+z&=\frac{(au^2-bv^2)^2}{uv(bv-au)^2}+\frac{au^2-bv^2}{u(bv-au)}+\frac{au^2-bv^2}{v(bv-au)}\\
	&=\frac{(au^2-bv^2)(b-a)}{(bv-au)^2}.
	\end{align*}
	Par la définition de l'ensemble $M$, $\varrho\circ \Psi(\cup_{i=1}^4 T_i)\cap M\neq \varnothing$ si et seulement si 
	$$-(au^2-bv^2)(b-a)=\square.$$
	En particulier nous voyons que $au^2-bv^2<0$ puisque $b>a$. Nous déduisons du Lemme \ref{le:au2-bv2} que l'image d'un tel point est dans $S_3\cup S_4$. De l'équation $\mathcal{E}_{C_3,D}$ \eqref{eq:eqpellfermatgen}, la condition ci-dessus est équivalente à 
	$$-(b-a)^2\frac{D}{C_3}=\square\Leftrightarrow -\frac{D}{C_3}=\square.$$
	Le résultat en découle puisque nous avons imposé que $D,C_3$ soient premiers entre eux et que $C_3>0$.
	Le dernier énoncé découle de la Proposition \ref{po:thincusp}.
\end{proof}

\subsection{Région $S_1$}\label{se:criticalmeasure}
Grâce à la similitude du calcul, nous nous bornons alors pour la suite de cette sous-section à la région $S_1$ \eqref{eq:S1}. Comme expliqué précédemment, cette région n'intersecte pas l'ensemble mince $M$.
Pour $\varepsilon_1>\varepsilon_2>0,\theta_1>\theta_2>1$ fixés, on désigne par $\underline{\varepsilon},\underline{\theta}$ pour ces paramètres et l'on considère
\begin{equation}\label{eq:chi}
R(\underline{\varepsilon},\underline{\theta})=\{(w,z)\in S_1: \varepsilon_2<z\leqslant\varepsilon_1,\theta_2w<z\leqslant \theta_1w\}, 
\end{equation}
une région trapézoïdale et $\chi_{\underline{\varepsilon},\underline{\theta}}(\cdot)=\textbf{1}_{R(\underline{\varepsilon},\underline{\theta})}(\cdot)$ la fonction caractéristique. Elle sert d'une fonction \guillemotleft test\guillemotright{}. Pour déduire le Théorème \ref{th:main2} (2), il suffit d'établir la convergence de la suite $\{\delta_{\varrho^{-1}(S_1)\cap U,Q,B,\frac{5}{2}}(\chi_{\underline{\varepsilon},\underline{\theta}})\}_B$, car toute fonction continue à support compact est la limite uniforme d'une suite de fonctions caractéristique de la forme $\chi_{\underline{\varepsilon},\underline{\theta}}$.

\begin{theorem}\label{th:principalthm}
	On a
	\begin{align*}
		&\delta_{\varrho^{-1}(S_1)\cap U,Q,B,\frac{5}{2}}(\chi_{\underline{\varepsilon},\underline{\theta}})\\
		=&B^\frac{1}{5}\left(\int \chi_{\underline{\varepsilon},\underline{\theta}}(w,z) \frac{\mathds{E}(wz\sqrt{w+z}) }{wz\sqrt{w+z}}\operatorname{d}w\operatorname{d}z +O_{\underline{\varepsilon},\underline{\theta}}\left(\frac{(\log\log B)^\frac{5}{6}}{(\log B)^\frac{2-\sqrt{2}}{6}}\right)\right),
	\end{align*}
	où
	$\mathds{E}:\mathopen]0,\infty\mathclose[\to\mathopen[0,\infty\mathclose[$ est une fonction croissante en escalier.
\end{theorem}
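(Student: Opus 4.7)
The plan is to reduce the count of points in $\varrho^{-1}(S_1) \cap U(\QQ)$ satisfying the zoom–height condition to a polynomial congruence problem, apply the equidistribution result (Proposition \ref{prop:discrepancy}) to get the main term for each choice of arithmetic data, and finally assemble the asymptotic as an integral by summing over finitely many parameters.

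First, I would invoke the bijection $\varrho \circ \Psi : T_1 \to \QQ^2 \cap S_1 \cap \{\max(|w|,|z|)<1\}$ from Section \ref{se:parametrization}, which represents every point $P \in \varrho^{-1}(S_1) \cap U(\QQ)$ as a unique $(a,b)\times(u,v) \in T_1$. Since $T_1$ is disjoint from $T_3 \cup T_4$, Lemma \ref{le:au2-bv2} gives $au^2 - bv^2 > 0$. The zoom bound $d(P) \leqslant \varepsilon B^{-2/5}$ combined with $H(P) \leqslant B$ translates via \eqref{eq:pgcd} and \eqref{eq:wz} into bounds on $(a,b,u,v)$, and Lemma \ref{le:finitenessofpara} identifies the finitely many bounded arithmetic invariants $D_1$, $D_2$, $c_3 = |b-a|/d_3$ and $|au^2 - bv^2|/(D_1 D_2 d_3)$. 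Fixing these data turns the problem into a sum of counts, each indexed by a tuple in a bounded set.

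For each fixed datum, I would write the equation in the form
\[
au^2 - bv^2 = \frac{b-a}{C_3}\, D, \qquad \pgcd(C_3, D) = 1, \quad D \in \ZZ_{>0},
\]
with $C_3 \mid c_3$; this is exactly equation $\mathcal{E}_{C_3, D}$ of Lemma \ref{le:parametrizationofpell}, and the positivity $D > 0$ comes from $au^2 - bv^2 > 0$. Lemma \ref{le:parametrizationofpell} then yields
\[
a = \frac{C_3 v^2 + D}{k(u,v)}, \quad b = \frac{C_3 u^2 + D}{k(u,v)}, \quad k(u,v) = \pgcd(C_3 u^2 + D,\, C_3 v^2 + D),
\]
so $(a,b)$ is determined by $(u,v)$. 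Proposition \ref{le:keytranslation} converts the divisibility condition $c_3 / C_3 \mid q$ (for the appropriate bounded $q$) into the congruence $u + v \mid q\, k(u,v)$, i.e.\ $F(v) \equiv 0 \pmod{u+v}$ for $F(X) = C_3 X^2 + D \in \ZZ[X]$. By Proposition \ref{le:splitpoly}, since $D > 0$, the polynomial $F$ is irreducible over $\ZZ[X]$, placing us within the framework of Section \ref{se:irred}.

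Next, I would apply Proposition \ref{prop:discrepancy} with modulus $k = u + v$ and residue $v \pmod{k}$, restricted to the interval $I \subset [0,1)$ determined by the slope condition $\theta_2 < u/v < \theta_1$ (using $z/w = u/v$ from \eqref{eq:wz}) and the bounds on $(w,z)$. The counting range $k \leqslant X$ has the form $X \asymp B^{1/5} \cdot \phi(w,z)$ for an explicit factor $\phi$ derived from \eqref{eq:distance2} and the height estimate; a change of variables from $(u,v)$ to $(w,z)$ via \eqref{eq:wz} produces a Jacobian whose combination with $\phi$ yields the density $1/(wz\sqrt{w+z})$. The main term $Z_F \cdot |I| \cdot X$ of Proposition \ref{prop:discrepancy} then integrates, after the change of variables, to $\int \chi_{\varepsilon_i,\theta_j}(w,z) \, c_F /(wz\sqrt{w+z}) \, \mathrm{d}w\, \mathrm{d}z$, while the error contributes $O(B^{1/5} (\log\log B)^{5/6}/(\log B)^{(2-\sqrt{2})/6})$ for $d = 2$.

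Finally, summing over all bounded admissible tuples $(C_3, D, D_1, D_2, c_3/C_3, \ldots)$ produces the step function $\mathds{E}(wz\sqrt{w+z})$: at each value of $(w,z)$ only finitely many tuples contribute, and as $wz\sqrt{w+z}$ crosses certain arithmetic thresholds new tuples become admissible, giving $\mathds{E}$ its step structure and monotonicity. The main obstacle is controlling the cumulative error uniformly across this sum: one must verify that the implied constant in the discrepancy bound of Proposition \ref{prop:discrepancy} depends only mildly on $F = C_3 X^2 + D$, and that the contribution of tuples with large parameters decays fast enough (so that $\sum_F Z_F$ converges and the error stays of order $B^{1/5} (\log\log B)^{5/6}/(\log B)^{(2-\sqrt 2)/6}$). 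A secondary technical point is the precise identification of the argument $wz\sqrt{w+z}$ for the step function, which requires careful bookkeeping of how the counting range $X$ for $k$ depends on $(w,z)$ through both the height bound and the zoom condition.
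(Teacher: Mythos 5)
Your plan follows the paper's route in outline: parametrize $S_1$ by $\varrho\circ\Psi$, restrict to finitely many arithmetic data via Lemma \ref{le:finitenessofpara}, reduce through Lemma \ref{le:parametrizationofpell} and Proposition \ref{le:keytranslation} to a congruence $F(v)\equiv 0\,[u+v]$ with $F=C_3X^2+D$ irreducible because $D>0$ in $R_1$ (Lemma \ref{le:au2-bv2}), and then invoke equidistribution. The mechanism you describe for the step structure of $\mathds{E}$ is also the right one.

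There is, however, a genuine gap at the step ``apply Proposition \ref{prop:discrepancy} \dots a change of variables produces the density.'' Proposition \ref{prop:discrepancy} counts residues with a \emph{fixed} modulus range $k\leqslant X$, whereas here the admissible range of the modulus $m'=u+v$ depends on the residue $l'/m'$ itself (through the zoom condition), so the proposition cannot be applied directly. The paper bridges this with Proposition \ref{po:centralcouting}: it slices the slope interval $]\vartheta_2,\vartheta_1[$ into $O\bigl((\log X)^{\alpha_d/2}/(\log\log X)^{\beta_d/2}\bigr)$ subintervals on which the modulus bound is essentially constant, applies the discrepancy estimate on each slice, and reassembles a Riemann sum. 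That intermediate proposition is what actually yields both the integral against $\operatorname{d}w\operatorname{d}z/(wz\sqrt{w+z})$ and the quoted error exponent. Relatedly, you pass silently over the conversion of the height condition $H(P)\leqslant B$ into the threshold inequality $z_0w_0\sqrt{z_0+w_0}>D^{5/2}C_3^{1/2}W^3/(D_1^2D_2^2)$ (Lemmas \ref{le:threshold0}, \ref{le:threshold1}, controlled via \eqref{eq:errortermcontrol}); this, and not the change of variables alone, is where the argument $wz\sqrt{w+z}$ of $\mathds{E}$ originates. Finally, your worry about the convergence of a sum over infinitely many polynomials $F$ is moot: Lemma \ref{le:finitenessofpara} guarantees only finitely many tuples $(C_3,D,W,D_1,D_2)$ occur, and the M\"obius inversions \eqref{eq:decomp2}--\eqref{eq:decomp4} (which you omit, but which remove the gcd constraints) are likewise finite.
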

\begin{remark*}
	La mesure obtenue fait apparaître les courbes $l_i,1\leqslant i\leqslant 3$ \eqref{eq:curvesmalldegree} de degré $2$ et la somme de leur puissances est égale exactement à la constante essentielle $\frac{5}{2}$.
\end{remark*}
\subsubsection{Déroulement du comptage}
Pour $P=[x:y]\times [s:t]$ tel que $\varrho\circ \Psi(P)\in S_1$, on a
\begin{equation}\label{eq:maxheight}
\max(|x^2 st|,| y^2 st|,| t^2 xy|,|s^2 xy|,|xyst|,|y^2t^2|)=|s^2 xy|,
\end{equation}
Pour $(a,b)\times (u,v)\in T_1$ \eqref{eq:T1}, la formule (cf. \S\ref{se:height}) pour calculer la hauteur par rapport aux paramètres $a,b,u,v$ est donnée, grâce à \eqref{eq:pgcd}, \eqref{eq:paracab} et \eqref{eq:pgcd2},  par
\begin{align*}
	\h((\varrho\circ\Psi) (a,b)\times (u,v))&=\frac{|s^2xy|}{\pgcd(x,t)\pgcd(y,s)\pgcd(y,t)}\\
	&=\frac{a^2bu^2(u-v)^3}{D_1^2D_2^2d_3^3}.
\end{align*}
La distance \eqref{eq:distance}  restreinte à $S_1$ est, d'après \eqref{eq:distance2},
\begin{equation}\label{eq:maxdistance}
d((\varrho\circ\Psi) (a,b)\times (u,v))=\frac{\frac{u^2}{v^2} -\frac{b}{a}}{\frac{b}{a}-\frac{u}{v}}.
\end{equation}

L'équivalence établie dans la Proposition \ref{le:keytranslation} nous permet de faire lien avec le problème de congruences polynomiales.
Nous allons faire une partition des paramètres $(a,b)\times (u,v)\in T_1$ selon la famille des équations $(\mathcal{E}_{C_3,D})_{C_3\in\NN_{\geqslant 1},D\in\ZZ_{\neq 0}}$ \eqref{eq:eqpellfermatgen}. D'après le Lemme \ref{le:au2-bv2}, dans $T_1$ on a $au^2-bv^2>0$, et donc $D>0$ puisque $b>a$.
Pour $D_1,D_2\in\NN_{\geqslant 1}, D_1D_2\mid D,\pgcd(D_1,D_2)=1$ et $W\in\NN_{\geqslant 1},\pgcd(W,D)=1$, nous définissons $E^{\underline{\varepsilon},\underline{\theta}}_{C_3,D,W}(\mathbf{D})$ ($\mathbf{D}$ désigne les paramètres $D_1,D_2$) l'ensemble des $(a,b)\times(u,v)\in T_1$ satisfaisant à l'équation $\mathcal{E}_{C_3,D}$ \eqref{eq:eqpellfermatgen} et vérifiant \eqref{eq:D1D2div}, \eqref{eq:Wdiv},\eqref{eq:conditionE}  et \eqref{eq:conditionEH} suivantes.
\begin{equation}\label{eq:D1D2div}
D_1=\pgcd(u^2,b),\quad D_2=\pgcd(v^2,a),
\end{equation}
\begin{equation}\label{eq:Wdiv}
 \pgcd(u-v,b-a)=\frac{b-a}{C_3W},
\end{equation}
\begin{equation}\label{eq:conditionE}
\theta_2<\frac{u}{v}\leqslant\theta_1,\quad \varepsilon_2B^{-\frac{2}{5}}<\frac{\frac{u^2}{v^2} -\frac{b}{a}}{\frac{b}{a}-\frac{u}{v} }\leqslant \varepsilon_1 B^{-\frac{2}{5}},
\end{equation}
\begin{equation}\label{eq:conditionEH}
\frac{a^2bu^2(u-v)^3}{D_1^2D_2^2d_3^3}=\frac{a^2bu^2(u-v)^3C_3^3W^3}{(b-a)^3D_1^2D_2^2}\leqslant B.
\end{equation}
D'après le Lemme \ref{le:pgcd}, \eqref{eq:Wdiv} est bien définie.
Un calcul donne que \eqref{eq:uv} est garantie par \eqref{eq:conditionE} quand $B\gg_{\overline{\varepsilon},\overline{\theta}} 1$. À l'aide du Lemme \ref{le:parametrizationofpell}, nous pouvons éliminer les paramètres $a,b$ dans \eqref{eq:conditionE} et nous obtenons
\begin{equation}\label{eq:condzoom1}
\theta_2<\frac{u}{v}\leqslant\theta_1,\quad \varepsilon_2 v(C_3uv-D)\leqslant B^\frac{2}{5}D(u+v)\leqslant \varepsilon_1 v(C_3uv-D),
\end{equation}
équivalente à \eqref{eq:conditionE}.
\begin{lemma}
On a 
\begin{equation}\label{eq:decomp1}
\begin{split}
\delta_{\varrho^{-1}(S_1)\cap U,Q,B,\frac{5}{2}}(\chi_{\underline{\varepsilon},\underline{\theta}})&=\# \left(\bigsqcup_{\substack{C_3,D,W\in\NN_{\geqslant 1},\pgcd(C_3W,D)=1\\D_1D_2\mid D,\pgcd(D_1,D_2)=1}}E^{\underline{\varepsilon},\underline{\theta}}_{C_3,D,W}(\mathbf{D})\right).
\end{split}
\end{equation}
La réunion disjointe dans \eqref{eq:decomp1} est finie.
\end{lemma}
\begin{proof}
 Le Lemme \ref{le:finitenessofpara} et les identités \eqref{eq:eqpellfermatgen} et \eqref{eq:Wdiv}
nous révèlent que 
$$\pgcd(u^2,b)\pgcd(v^2,a) c_3\ll_{\varepsilon_1} 1,$$
et
$$\frac{|au^2-bv^2|}{\pgcd(u^2,b)\pgcd(v^2,a)\pgcd(u-v,b-a)}=\frac{c_3}{C_3}\times \frac{D}{\pgcd(u^2,b)\pgcd(v^2,a)}\ll_{\varepsilon_1} 1,$$
 et donc
$C_3,D,W=O_{\varepsilon_1}(1)$.
\end{proof}
\subsubsection{Conditions de seuils}\label{se:threshold}
Avant de poursuivre le dénombrement, nous allons premièrement trouver dans cette section pour chaque point $P=(a,b)\times (u,v)\in E^{\underline{\varepsilon},\underline{\theta}}_{C_3,D,W}(\mathbf{D})$ une condition pour qu'il soit dénombré. Soit $(w,z)=\varrho\circ\Psi (P)$. Il s'avère que la condition \eqref{eq:conditionEH} donne une \emph{équation de seuil} (cf. \eqref{eq:threshold}).
\begin{lemma}\label{le:threshold0}
	Pour tout $P=(a,b)\times (u,v)\in T_1$ qui vérifie \eqref{eq:eqpellfermatgen}, \eqref{eq:D1D2div}, \eqref{eq:Wdiv} et  \eqref{eq:conditionEH}, notons
	\begin{equation}\label{eq:w0z0}
	 (w_0,z_0)=B^{\frac{2}{5}}\varrho\circ\Psi(P)=(B^{\frac{2}{5}}w,B^\frac{2}{5}z).
	\end{equation}
	Alors nous avons
\begin{equation}\label{eq:threshold}
		z_0w_0\sqrt{z_0+w_0}> \frac{D^\frac{5}{2} C_3^\frac{1}{2}W^3}{D_1^2D_2^2}.
\end{equation}
\end{lemma}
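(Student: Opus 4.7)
Mon plan est de ramener l'inégalité \eqref{eq:threshold} à une inégalité algébrique explicite en $u, v, C_3, D$, à l'aide du paramétrage $a = (C_3v^2+D)/k(u,v)$, $b = (C_3u^2+D)/k(u,v)$ fourni par le lemme \ref{le:parametrizationofpell}. D'abord, un calcul direct donne les factorisations
$$au^2 - bv^2 = \frac{D(u-v)(u+v)}{k(u,v)}, \quad bv-au = \frac{(u-v)(C_3 uv - D)}{k(u,v)}, \quad b-a = \frac{C_3(u-v)(u+v)}{k(u,v)},$$
dans lesquelles les facteurs $k(u,v)$ et $(u-v)$ vont se simplifier ensuite.

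En reportant ces expressions dans \eqref{eq:wz} et dans la définition de la quantité $H$ issue de \eqref{eq:conditionEH}, j'obtiendrais
$$zw\sqrt{z+w} = \frac{D^{5/2}(u+v)^3}{(uv)^{3/2}(C_3 uv - D)^{5/2}}, \quad H = \frac{(C_3v^2+D)^2(C_3u^2+D)\, u^2 W^3}{(u+v)^3 D_1^2 D_2^2}.$$
Puisque $z_0 w_0 \sqrt{z_0+w_0} = B\cdot zw\sqrt{z+w} \geqslant H \cdot zw\sqrt{z+w}$ par \eqref{eq:conditionEH}, et que $D>0$ ainsi que $C_3uv-D>0$ dans $T_1$ (conséquence du lemme \ref{le:au2-bv2} et de la positivité de $bv-au$), l'inégalité \eqref{eq:threshold} s'équivaut à l'inégalité polynomiale
$$(C_3v^2+D)^2(C_3u^2+D)\, u^{1/2} > C_3^{1/2}\, v^{3/2}(C_3 uv - D)^{5/2}.$$

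L'étape cruciale sera alors l'identité algébrique
$$(C_3v^2+D)(C_3u^2+D) - (C_3uv-D)^2 = C_3\, D\,(u+v)^2,$$
dont le second membre est strictement positif. Elle fournit $(C_3v^2+D)(C_3u^2+D) > (C_3uv-D)^2$, d'où, en multipliant par $(C_3v^2+D)$, il suffit de vérifier $(C_3v^2+D)(C_3uv-D)^2 u^{1/2} \geqslant C_3^{1/2} v^{3/2}(C_3uv-D)^{5/2}$. Après division par $(C_3uv-D)^2$ et élévation au carré, cela se réduit à $(C_3v^2+D)^2\, u > C_3\, v^3\,(C_3uv-D)$, inégalité dont la différence des deux membres vaut $D(2uC_3v^2 + uD + C_3v^3) > 0$. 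La seule vraie difficulté est calculatoire: suivre soigneusement les simplifications qui éliminent $k(u,v)$ et $(u-v)$, puis repérer l'identité algébrique ci-dessus. Il est d'ailleurs remarquable que le facteur $C_3D(u+v)^2$ soit précisément celui qui intervient dans la caractérisation \eqref{eq:thinsetM} de l'ensemble mince $M$.
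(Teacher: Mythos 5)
Votre démonstration est correcte, mais elle emprunte un chemin sensiblement différent de celui du papier. Le papier procède par une chaîne de majorations plus grossières: il part de l'égalité $z_0 = B^{2/5}\frac{D(u+v)}{v(C_3uv - D)}$, affaiblit en $B^{2/5} < \frac{z_0 C_3 uv^2}{D(u+v)}$ (en laissant tomber le $D$ dans $C_3uv-D$), puis minore $(C_3v^2+D)^2(C_3u^2+D)u^2 > C_3^3 u^4v^4$ dans la condition de hauteur pour obtenir $\frac{C_3^3W^3u^4v^4}{D_1^2D_2^2(u+v)^3}<B$, combine les deux bornes sur $B^{2/5}$, substitue $v/u = w_0/z_0$ et élève à la puissance $\frac{5}{2}$. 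Vous, au contraire, travaillez avec les expressions exactes de $zw\sqrt{z+w}$ et de la quantité $H$ de \eqref{eq:conditionEH}, observez que $z_0w_0\sqrt{z_0+w_0} = B\cdot zw\sqrt{z+w} \geqslant H\cdot zw\sqrt{z+w}$, et réduisez \eqref{eq:threshold} à une inégalité polynomiale fermée que vous prouvez via l'identité $(C_3v^2+D)(C_3u^2+D)-(C_3uv-D)^2 = C_3D(u+v)^2$. Les calculs se vérifient (les factorisations de $au^2-bv^2$, $bv-au$, $b-a$ sont exactes; la positivité de $D$ et de $C_3uv-D$ dans $T_1$ découle bien du lemme \ref{le:au2-bv2} et de $bv-au>0$). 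Une seule imprécision de langage: votre réduction fournit une condition \emph{suffisante} et non une équivalence, puisque l'on n'utilise que $B\geqslant H$; le raisonnement n'en est pas affecté. Votre approche a l'avantage d'être algébriquement plus transparente et plus exacte, et surtout de mettre en évidence que le défaut $C_3D(u+v)^2$ coïncide (à normalisation près) avec la quantité $wz+w+z$ qui caractérise la partie mince $M$ de \eqref{eq:thinsetM} — lien que la preuve du papier ne fait pas ressortir.
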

\begin{proof}
	D'après le Lemme \ref{le:parametrizationofpell}, 
	$$a=\frac{C_3 v^2+D}{k(u,v)},\quad b=\frac{C_3 u^2+D}{k(u,v)},$$
	on a $$au^2-bv^2=\frac{D(b-a)}{C_3}=\frac{D(u+v)(u-v)}{k(u,v)},$$
	$$bv-au=\frac{(C_3uv-D)(u-v)}{k(u,v)}.$$
	Nous avons donc, d'après la définition du zoom et \eqref{eq:wz},
\begin{equation}\label{eq:condzoomu1}
	z_0=B^\frac{2}{5}z=B^\frac{2}{5}\frac{au^2-bv^2}{v(bv-au)}=B^\frac{2}{5}\frac{D(u+v)}{v(C_3uv-D)}.
\end{equation}
	Nous obtenons que
\begin{equation}\label{eq:b1}
	\frac{z_0C_3 uv^2 }{D(u+v)}>B^\frac{2}{5}.
\end{equation}
	Avec la condition \eqref{eq:conditionEH} et l'identité
	$$\frac{b-a}{c_3}=\frac{u^2-v^2}{k(u,v)},$$
	on obtient
	\begin{equation}\label{eq:condheight}
	W^3(C_3v^2+D)^2(C_3u^2+D)u^2\leqslant B(u+v)^3D_1^2D_2^2.
	\end{equation}
	Cette borne supérieure nous donne la majoration
\begin{equation}\label{eq:b2}
	\frac{C_3^3W^3 u^4v^4}{D_1^2D_2^2(u+v)^3}< B.
\end{equation}
	Ces deux inégalités \eqref{eq:b1} \& \eqref{eq:b2} donnent
	$$\left(\frac{C_3^3W^3}{D_1^2D_2^2}\right)^\frac{2}{5}\frac{u^\frac{8}{5}v^\frac{8}{5}}{(u+v)^\frac{6}{5}}<\frac{z_0C_3 uv^2 }{D(u+v)},$$
	qui elle-même implique 
	$$\frac{C_3^\frac{1}{5}W^\frac{6}{5}D}{D_1^\frac{4}{5}D_2^\frac{4}{5}}< z_0\left(1+\frac{v}{u}\right)^\frac{1}{5}\left(\frac{v}{u}\right)^\frac{2}{5}.$$
	D'après la définition du paramétrage $\Psi$ et du difféomorphisme local $\varrho$ \eqref{eq:diffeomorphism}, nous avons la relation entre les paramètres $(u,v)$ et les coordonnées $(w,z)$,
\begin{equation}\label{eq:b4}
	\frac{u}{v}=\frac{z}{w}=\frac{z_0}{w_0},
\end{equation}
	d'ou nous obtenons 
	$$\frac{DC_3^\frac{1}{5}W^\frac{6}{5}}{D_1^\frac{4}{5}D_2^\frac{4}{5}}< z_0\left(1+\frac{w_0}{z_0}\right)^\frac{1}{5}\left(\frac{w_0}{z_0}\right)^\frac{2}{5}=z_0^\frac{2}{5}w_0^\frac{2}{5}(z_0+w_0)^\frac{1}{5}.$$
	En prenons la puissance $\frac{5}{2}$, nous obtenons la borne inférieure cherchée puisque $w_0,z_0>0$ dans la région $S_1$.
\end{proof}
\begin{lemma}\label{le:threshold1}
	Conservons la notation \eqref{eq:w0z0}. Pour tout $\varepsilon>0$, il existe $\mu_0>0$ ne dépendant que de $C_3,D,W,\varepsilon$ tel que pour tout $P=(a,b)\times (u,v)\in T_1$ vérifiant $z_0\leqslant \varepsilon$, \eqref{eq:eqpellfermatgen}, \eqref{eq:D1D2div}, \eqref{eq:Wdiv} et
	\begin{equation}\label{eq:w0z01}
	z_0w_0\sqrt{z_0+w_0}\geqslant\frac{D^\frac{5}{2}C_3^\frac{1}{2}W^3}{D_1^2D_2^2}+\mu_0 B^{-\frac{2}{5}},
	\end{equation}
	la condition \eqref{eq:conditionEH} soit vérifiée pour tout $B\gg_{C_3,D,W,\varepsilon} 1$.
\end{lemma}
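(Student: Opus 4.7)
Mon plan est d'inverser, en contrôlant soigneusement les termes d'erreur, la chaîne d'implications de la démonstration du Lemme \ref{le:threshold0}. La pièce maîtresse est l'identité algébrique suivante : en multipliant l'expression explicite de la hauteur $\h_{\omega_{Y_3}^{-1}}(P) = \frac{W^3 (C_3 v^2+D)^2 (C_3 u^2+D) u^2}{(u+v)^3 D_1^2 D_2^2}$ (cf. \eqref{eq:conditionEH} couplée à \eqref{eq:ab}) par la formule
\[
z_0 w_0 \sqrt{z_0+w_0} = B\cdot \frac{D^{5/2}(u+v)^3}{(uv)^{3/2}(C_3 uv -D)^{5/2}},
\]
(qui s'obtient comme \eqref{eq:condzoomu1} à partir de $w = \frac{D(u+v)}{u(C_3uv - D)}$ et $z = \frac{D(u+v)}{v(C_3uv - D)}$), on trouve
\[
\h_{\omega_{Y_3}^{-1}}(P)\cdot z_0 w_0 \sqrt{z_0+w_0} = \frac{B\,W^3 D^{5/2}}{D_1^2 D_2^2}\cdot \frac{(C_3 v^2+D)^2 (C_3 u^2+D)\,u^2}{(uv)^{3/2}(C_3 uv-D)^{5/2}}.
\]

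J'établirais d'abord les bornes a priori $u, v \asymp_{C_3,D,\varepsilon,\theta} B^{1/5}$ à partir de \eqref{eq:conditionE} : sachant que $\theta_2 < u/v < \theta_1$, on a $z \asymp D/(C_3 uv)$ pour $uv$ grand, et l'encadrement $\varepsilon_2 B^{-2/5} < z \leqslant \varepsilon_1 B^{-2/5}$ force donc $uv \asymp B^{2/5}$. Ensuite, un développement direct du numérateur combiné à $(C_3uv-D)^{5/2} = (C_3uv)^{5/2}(1-D/(C_3uv))^{5/2}$ conduit à
\[
\frac{(C_3 v^2+D)^2 (C_3 u^2+D)\,u^2}{(uv)^{3/2}(C_3 uv-D)^{5/2}} = C_3^{1/2} + O_{C_3,D}\!\left(\frac{1}{u^2}+\frac{1}{v^2}+\frac{1}{C_3uv}\right),
\]
dont le terme d'erreur est $O_{C_3,D,\varepsilon,\theta}(B^{-2/5})$ grâce aux bornes a priori ci-dessus.

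L'identité clef se réécrit donc $\h_{\omega_{Y_3}^{-1}}(P)\cdot z_0 w_0 \sqrt{z_0+w_0} \leqslant \frac{B W^3 D^{5/2} C_3^{1/2}}{D_1^2 D_2^2} + C_1 B^{3/5}$ pour une constante $C_1 = C_1(C_3,D,W,\varepsilon,\theta)>0$. En divisant par la minoration \eqref{eq:w0z01}, on obtient
\[
\h_{\omega_{Y_3}^{-1}}(P) \leqslant B\cdot \frac{\frac{W^3 D^{5/2} C_3^{1/2}}{D_1^2 D_2^2} + C_1 B^{-2/5}}{\frac{W^3 D^{5/2} C_3^{1/2}}{D_1^2 D_2^2} + \mu_0 B^{-2/5}} \leqslant B
\]
dès qu'on fixe $\mu_0 \geqslant C_1$, ce qui est précisément le contenu de \eqref{eq:conditionEH}. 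La principale difficulté technique est le contrôle uniforme du terme d'erreur dans le développement, en particulier la minoration $u, v \gg B^{1/5}$ tirée de la borne inférieure $\varepsilon_2 B^{-2/5} < z$ du zoom : sans cette minoration, l'erreur ne pourrait pas être absorbée dans le tampon $\mu_0 B^{-2/5}$ et l'énoncé tomberait en défaut.
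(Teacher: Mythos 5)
Your proof is correct and follows the same underlying computation as the paper's (both reverse the estimate of Lemma \ref{le:threshold0} by comparing the height with $z_0w_0\sqrt{z_0+w_0}$ and expanding in $1/u,1/v$), but you organize it more transparently: you first establish the exact multiplicative identity
\[
\h_{\omega_{Y_3}^{-1}}(P)\cdot z_0w_0\sqrt{z_0+w_0}=\frac{B\,W^3D^{5/2}}{D_1^2D_2^2}\cdot\frac{(C_3v^2+D)^2(C_3u^2+D)\,u^2}{(uv)^{3/2}(C_3uv-D)^{5/2}},
\]
and only then expand the last factor, whereas the paper chains several approximate inequalities and defers to ``la preuve déroule essentiellement en suivant celle précédente.'' Your a priori input $u,v\asymp B^{1/5}$, drawn from the two-sided zoom window in \eqref{eq:conditionE}, plays exactly the role that the paper's bound $\theta_0\ll_{C_3,D,W,\varepsilon}1$ plays (the paper extracts it from \eqref{eq:w0z01} and $z_0\leqslant\varepsilon_1$ rather than from the slope bounds $\theta_2<u/v<\theta_1$, but the upshot is the same); it is what makes the error term $O(1/u^2+1/v^2+1/(C_3uv))$ absorbable into the buffer $\mu_0 B^{-2/5}$, as you correctly emphasize.
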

\begin{proof}
	Notons $$\theta_0=\frac{u}{v}=\frac{z_0}{w_0}>1.$$
	La condition \eqref{eq:w0z01} implique \eqref{eq:threshold}, qui nous donne
	$$\frac{z_0^\frac{5}{2}}{\sqrt{z_0+w_0}}=\frac{\theta_0^\frac{5}{2}}{\sqrt{1+\theta_0}}\leqslant \frac{z_0^\frac{5}{2}D_1^2D_2^2}{D^\frac{5}{2}C_3^\frac{1}{2}W^3}\ll_{C_3,D,W,\varepsilon} 1,$$
	et donc 
	\begin{equation}\label{eq:b3}
	\theta_0\ll_{C_3,D,W,\varepsilon} 1.
	\end{equation}
	D'après l'identité \eqref{eq:condzoomu1}, nous avons
	$$B^{\frac{2}{5}}=\frac{z_0v(C_3uv-D)}{D(u+v)}\geqslant \frac{z_0C_3uv^2}{u+v}-\varepsilon.$$
	En combinant \eqref{eq:b3}, ceci implique aussi
	\begin{equation}\label{eq:upperboundforuv}
	u,v\ll_{C_3,D,W,\varepsilon} B^{\frac{1}{5}}.
	\end{equation}
	En utilisant l'inégalité \eqref{eq:b1} on obtient aussi
	\begin{equation}\label{eq:lowerboundforuv}
	z_0\gg_{C_3,D,W,\varepsilon} 1,\quad u,v\gg_{C_3,D,W,\varepsilon} B^{\frac{1}{5}}.
	\end{equation}
	On peut alors déduire la condition de hauteur \eqref{eq:conditionEH}, ou de la manière équivalente à \eqref{eq:condheight},
	d'une inégalité du type
	$$W^3C_3^3  u^4v^4+\mu_1u^4v^2\leqslant \left(\frac{z_0C_3uv^2}{u+v}-\varepsilon\right)^\frac{5}{2}(u+v)^3D_1^2D_2^2,$$
	où $\mu_1=O_{C_3,D,W}(1)$.
	Pour achever cette inégalité avec la condition \eqref{eq:w0z01}, il suffit d'utiliser les encadrements
	$$1<\theta_0\ll_{C_3,D,W,\varepsilon} 1,\quad 1\ll_{C_3,D,W,\varepsilon} z_0\leqslant \varepsilon,\quad u,v\gg\ll_{C_3,D,W,\varepsilon} B^{\frac{1}{5}}$$
	qui rassemblent \eqref{eq:b4}, \eqref{eq:b3}, \eqref{eq:upperboundforuv}, \eqref{eq:lowerboundforuv} et de suivre la preuve du lemme précédent.
\end{proof}

Nous concluons des Lemmes \ref{le:threshold0} et \ref{le:threshold1} qu'en prenant une fonction test \eqref{eq:chi}, nous pouvons remplacer la condition \eqref{eq:conditionEH} par \eqref{eq:threshold}.
\begin{corollary}\label{le:decomp0}
	Nous avons
	\begin{equation}\label{eq:seuilreplaceheight}
	\begin{split}
	\# E^{\underline{\varepsilon},\underline{\theta}}_{C_3,D,W}(\mathbf{D})&=\# \widetilde{E}^{\underline{\varepsilon},\underline{\theta}}_{C_3,D,W}(\mathbf{D})+O(\# \operatorname{Er}^{\underline{\varepsilon},\underline{\theta}}_{C_3,D,W}(\mathbf{D})),
	\end{split}
	\end{equation}
	où $\widetilde{E}^{\underline{\varepsilon},\underline{\theta}}_{C_3,D,W}(\mathbf{D})$ consiste en les $(a,b)\times(u,v)\in T_1$ vérifiant \eqref{eq:eqpellfermatgen}, \eqref{eq:D1D2div}, \eqref{eq:Wdiv}, \eqref{eq:condzoom1}, \eqref{eq:threshold} et $\operatorname{Er}^{\underline{\varepsilon},\underline{\theta}}_{C_3,D,W}(\mathbf{D})$ est l'ensemble des $(a,b)\times(u,v)\in T_1$ satisfaisant aux mêmes conditions précédentes sauf \eqref{eq:threshold} est remplacée par (cf. \eqref{eq:w0z01})
	\begin{equation}\label{eq:errorterm}
	\frac{D^\frac{5}{2} C_3^\frac{1}{2}W^3}{D_1^2D_2^2}\leqslant z_0w_0\sqrt{z_0+w_0}\leqslant \frac{D^\frac{5}{2} C_3^\frac{1}{2}W^3}{D_1^2D_2^2}+\mu_0 B^{-\frac{2}{5}}.
	\end{equation}
\end{corollary}

\subsubsection{Réduction du comptage}
Le but dans cette section est d'utiliser la Proposition \ref{le:keytranslation} pour réécrire le cardinal $\# \widetilde{E}^{\underline{\varepsilon},\underline{\theta}}_{C_3,D,W}(\mathbf{D})$ dans le Corollaire \ref{le:decomp0} en une somme de racines de congruences quadratiques.
Premièrement nous éliminons la condition de pgcd \eqref{eq:Wdiv}.
Au vu de la deuxième divisibilité du Lemme \ref{le:pgcd} et de \eqref{eq:W}, on a $c_3=C_3W$ et une inversion de Möbius nous donne
\begin{equation}\label{eq:decomp2}
\# \widetilde{E}^{\underline{\varepsilon},\underline{\theta}}_{C_3,D,W}(\mathbf{D})=\sum_{q\mid W} \mu\left(\frac{W}{q}\right)\# B_{C_3,D}^{\underline{\varepsilon},\underline{\theta}}\left(\mathbf{D},q\right)
\end{equation}
où pour $q, D_1,D_2\in\NN_{\geqslant 1}$ fixés, $B_{C_3,D,W}^{\underline{\varepsilon},\underline{\theta}}(\mathbf{D},q)$ est constitué des $(a,b)\times (u,v)\in T_1$ vérifiant \eqref{eq:eqpellfermatgen}, \eqref{eq:quadraticcong}, \eqref{eq:D1D2div}, \eqref{eq:condzoom1}, \eqref{eq:threshold}.

Nous allons désormais nous concentrer sur un seul ensemble $B_{C_3,D,W}^{\underline{\varepsilon},\underline{\theta}}(\mathbf{D},q)$ pour $C_3,D,W,D_1,D_2,q$ fixés.
Nous avons, grâce à la co-primalité pré-supposée, 
$$D_1=\pgcd(u^2,b)=\pgcd(u^2,au^2-bv^2)=\pgcd\left(u^2,\frac{b-a}{C_3}D\right)=\pgcd(u^2,D).$$
De la même manière, 
$$D_2=\pgcd(v^2,a)=\pgcd(v^2,D).$$
Donc nous pouvons appliquer une inversion de Möbius pour éliminer ces conditions de co-primalité. 
\begin{lemma}
	Nous avons
	\begin{equation}\label{eq:decomp3}
	\# B_{C_3,D,W}^{\underline{\varepsilon},\underline{\theta}}(\mathbf{D},q)=\sum_{\substack{h_1,h_2\in\NN_{\geqslant 1}\\D_1h_1,D_2h_2\mid D\\ \pgcd(h_1,h_2)=1}} \mu(h_1)\mu(h_2)\# \mathcal{B}_{C_3,D,W}^{\underline{\varepsilon},\underline{\theta}}(\mathbf{D},\mathbf{h},q)
	\end{equation}
	où $\mathcal{B}_{C_3,D,W}^{\underline{\varepsilon},\underline{\theta}}(\mathbf{D},\mathbf{h},q)$ est l'ensemble des $(u,v)\in \NN_{\operatorname{prem}}^2$ vérifiant \eqref{eq:quadraticcong}, \eqref{eq:threshold}, \eqref{eq:condzoom1} et
	\begin{equation}\label{eq:condxyr1}
	h_1D_1\mid u^2,\quad h_2D_2\mid v^2\quad \Longleftrightarrow \quad g(h_1D_1)\mid u,\quad g(h_2D_2)\mid v,
	\end{equation}
\end{lemma}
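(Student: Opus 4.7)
Mon plan est de procéder en trois étapes: passer du paramétrage par $(a,b)\times(u,v)$ au paramétrage par $(u,v)$ seul, isoler les conditions \og exactes\fg{} de pgcd, puis appliquer une inversion de Möbius pour les transformer en conditions de divisibilité pures.

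D'abord, rappelons que le lemme \ref{le:parametrizationofpell} donne une bijection entre les $(a,b)\times(u,v)\in\NN_{\operatorname{prem}}^2\times\ZZ_{\operatorname{prem}}^2$ satisfaisant à \eqref{eq:eqpellfermatgen} (ainsi qu'à la condition de positivité) et les $(u,v)$ vérifiant \eqref{eq:uv}, via les formules explicites \eqref{eq:ab}. Donc, modulo le passage à $\NN_{\operatorname{prem}}^2$ imposé par la condition de zoom \eqref{eq:condzoom1} (qui force $u,v>0$), dénombrer $B_{C_3,D,W}^{\varepsilon_i,\theta_j}(D_i,q)$ revient à dénombrer les $(u,v)\in\NN_{\operatorname{prem}}^2$ vérifiant les conditions \eqref{eq:quadraticcong} (via la proposition \ref{le:keytranslation}), \eqref{eq:condzoom1}, \eqref{eq:threshold} \emph{et} les deux conditions exactes de pgcd
\[\pgcd(u^2,D)=D_1,\qquad \pgcd(v^2,D)=D_2,\]
comme il est justement expliqué juste avant l'énoncé du lemme.

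Ensuite, j'applique l'inversion de Möbius à chaque condition exacte de pgcd. Puisque $D_1\mid D$, l'indicatrice $\mathbf{1}[\pgcd(u^2,D)=D_1]$ se décompose comme
\[\sum_{\substack{h_1\in\NN_{\geqslant 1}\\D_1h_1\mid D}}\mu(h_1)\,\mathbf{1}[D_1h_1\mid u^2],\]
et de même pour $v^2$. En multipliant les deux inversions j'obtiens une double somme sur les couples $(h_1,h_2)$ avec $D_ih_i\mid D$, le sommande contenant les conditions $D_1h_1\mid u^2$ et $D_2h_2\mid v^2$. La condition $\pgcd(h_1,h_2)=1$ dans l'énoncé provient du fait que si un nombre premier $p$ divisait à la fois $h_1$ et $h_2$, alors $p$ diviserait $u^2$ et $v^2$, donc $u$ et $v$, ce qui contredit $\pgcd(u,v)=1$; les termes correspondants ne contribuent donc pas et on peut imposer $\pgcd(h_1,h_2)=1$ sans restreindre la somme.

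Enfin, l'équivalence cruciale $h_iD_i\mid u^2\Leftrightarrow g(h_iD_i)\mid u$ (pour $i=1,2$, avec $u$ remplacé par $v$ dans le cas $i=2$) résulte immédiatement de la définition \eqref{eq:functiong}: pour tout entier $m\geqslant 1$, on a $m\mid n^2$ si et seulement si $v_p(m)\leqslant 2v_p(n)$ pour tout $p$ premier, ce qui équivaut à $\lceil v_p(m)/2\rceil\leqslant v_p(n)$, c'est-à-dire $g(m)\mid n$. Avec cette substitution, le sommande devient précisément $\# \mathcal{B}_{C_3,D,W}^{\varepsilon_i,\theta_j}(D_ih_i,q)$, d'où \eqref{eq:decomp3}. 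Il n'y a pas véritablement d'obstacle ici: tout est du bookkeeping arithmétique standard, la seule subtilité étant de bien justifier que la coprimalité $\pgcd(h_1,h_2)=1$ peut être imposée à l'aide de $\pgcd(u,v)=1$.
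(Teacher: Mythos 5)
Votre démonstration est correcte et suit exactement la stratégie que le papier esquisse juste avant l'énoncé du lemme (qui n'est d'ailleurs pas démontré formellement dans le texte): réduire les conditions exactes $D_1=\pgcd(u^2,D)$, $D_2=\pgcd(v^2,D)$ par inversion de Möbius, éliminer $(a,b)$ via le lemme \ref{le:parametrizationofpell}, et utiliser $\pgcd(u,v)=1$ pour justifier que les termes avec $\pgcd(h_1,h_2)>1$ sont nuls. Le passage de $h_iD_i\mid u^2$ à $g(h_iD_i)\mid u$ via les valuations $p$-adiques est correctement justifié. Aucune lacune.
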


La condition de divisibilité \eqref{eq:quadraticcong} maintenant s'écrit, 
\begin{equation}\label{eq:condxyr2}
q(C_3u^2+D)\equiv 0 [u+v],\quad q(C_3v^2+D)\equiv 0[u+v].
\end{equation}
L'une de ces deux conditions implique l'autre. 
La restriction à la région $S_1$ implique $\frac{u}{v}>1$ et impose donc l'unicité des couples de solutions $(u,v)$.
Nous allons désormais nous concentrer sur $u$ et $u+v$.
Introduisons les notations $m,n$ de sorte que
\begin{equation}\label{eq:changeofvar1}
u=g(h_1D_1)n,\quad u+v=m.
\end{equation}
Alors la condition de co-primalité de $(u,v)$ implique celle de $(m,n)$:
\begin{equation}\label{eq:coprime1}
\pgcd(u,v)=1\Longleftrightarrow\pgcd(m,n)=1 \text{ et} \pgcd(m,g(h_1D_1))=1.
\end{equation}
Maintenant \eqref{eq:condxyr1} et \eqref{eq:condxyr2} s'écrivent
\begin{equation}\label{eq:condxys1}
m-g(h_1D_1)n\equiv 0[g(h_2D_2)],\quad q(C_3g(h_1D_1)^2 n^2+D)\equiv 0[m].
\end{equation}
Puisque $\pgcd(h_1D_1,h_2D_2)=1$, soient $1\leqslant \gamma_1<g(h_2D_2)$ tel que $$\gamma_1g(h_1D_1)\equiv 1[g(h_2D_2)].$$
Nous obtenons 
$$\gamma_1 m-\gamma_1 g(h_1D_1)n\equiv \gamma_1 m-n\equiv 0[g(h_2D_2)].$$ 
Puisque $v=m-g(h_1D_1)n\geqslant 1$, 
$$\gamma_1\leqslant \gamma_1 m-\gamma_1 g(h_1D_1)n\leqslant \gamma_1 m-n.$$
Il existe donc un entier $l\in\NN_{\geqslant 1}$ tel que
\begin{equation}\label{eq:congrln}
\gamma_1m-g(h_2D_2)l=n,
\end{equation}
et la condition de congruence dans \eqref{eq:condxys1} maintenant devient
\begin{align}\label{eq:condxys2}
q(C_3g(h_1D_1h_2D_2)^2l^2+D)\equiv 0[m]
\end{align}
avec la condition de co-primalité pour $(m,l)$ déduite de \eqref{eq:coprime1}:
\begin{equation}\label{eq:coprime2}
\pgcd(m,g(h_1D_1h_2D_2))=\pgcd(m,l)=1.
\end{equation}
Nous faisons une dernière inversion de Möbius qui élimine ces dernières conditions de pgcd.
Soient $$ e_1\mid\pgcd(m,g(h_1D_1)),\quad e_2\mid \pgcd(m,g(h_2D_2)),\quad e_3\mid \pgcd(m,l),$$ tels que $\pgcd(e_3,g(h_1D_1h_2D_2))=1$. Alors $e_1e_2e_3\mid qD$ sinon la congruence \eqref{eq:condxys2} n'a pas de solution. Écrivons 
\begin{equation}\label{eq:lprimemprime}
m=e_1e_2e_3m^\prime,\quad l=e_3l^\prime.
\end{equation}
Nous pouvons enfin réécrire \eqref{eq:condxys2} comme
\begin{equation}\label{eq:condxyss}
\frac{qC_3g(h_1D_1h_2D_2)^2e_3}{e_1e_2}(l^\prime )^2+\frac{qD}{e_1e_2e_3}\equiv 0[m^\prime].
\end{equation}
Les relations entre $(l^\prime,m^\prime)$ et $(u,v)$ sont, d'après \eqref{eq:changeofvar1}, \eqref{eq:congrln} et \eqref{eq:lprimemprime}
\begin{equation}\label{eq:xylm}
u+v=e_1e_2e_3m^\prime,\quad u= g(h_1D_1)e_3(\gamma_1e_1e_2m^\prime-g(h_2D_2) l^\prime).
\end{equation}
En résumé, nous avons établi la formule suivante.
\begin{lemma}\label{le:decomp4}
	Nous avons ($\mathbf{e}$ désigne les paramètres $e_1,e_2,e_3$)
\begin{equation}\label{eq:decomp4}
\begin{split}
	&\# \mathcal{B}_{C_3,D,W}^{\underline{\varepsilon},\underline{\theta}}(\mathbf{D},\mathbf{h},q)\\
	=&\sum_{\substack{e_1e_2\mid g(h_1D_1h_2D_2),\pgcd(e_1,e_2)=1\\e_1e_2e_3\mid qD,\pgcd(e_3,g(h_1D_1h_2D_2))=1}}\left(\prod_{j=1}^{3}\mu(\mathbf{e})\right)\# \mathcal{C}_{C_3,D}^{\underline{\varepsilon},\underline{\theta}}(\mathbf{D},\mathbf{h},\mathbf{e},q),
	\end{split}
\end{equation}
	où $\mathcal{C}_{C_3,D,W}^{\underline{\varepsilon},\underline{\theta}}(\mathbf{D},\mathbf{h},\mathbf{e},q)$ est l'ensemble des couples $(l^\prime,m^\prime)\in\NN_{\geqslant 1}\times\NN_{\geqslant 1}$ vérifiant les conditions \eqref{eq:xylm}, \eqref{eq:condzoom1}, \eqref{eq:threshold} et
\begin{equation}\label{eq:condcongtocount}
	\mathcal{F}(l^\prime)\equiv 0[m^\prime],
\end{equation}
	où 
	\begin{equation}\label{eq:formF}
	\mathcal{F}(X)=\mathcal{F}_{C_3,D,\mathbf{h}\mathbf{D},\mathbf{e},q}(X)=\frac{qC_3g(h_1D_1h_2D_2)^2e_3}{e_1e_2}X^2+\frac{qD}{e_1e_2e_3}\in\ZZ[X].
	\end{equation}
\end{lemma}

\subsubsection{Une étape clef}
Nous allons démontrer la formule asymptotique suivante en appliquant la Proposition \ref{prop:discrepancy}, puisque la condition \eqref{eq:condzoom1} ne donne pas directement une forme sommatoire souhaitée.  Les notations dans cette proposition et sa preuve sont indépendantes de celles utilisées avant. 

\begin{proposition}\label{po:centralcouting}
	Soient $A,X>0$ et $0<\vartheta_2<\vartheta_1\leqslant1$ vérifiant $A>\vartheta_1$. 
	Soit $G:\mathopen]0,\vartheta_1\mathclose]\to \RR_{>0}$ une fonction continue.
	Soit $F(Y)\in\ZZ[Y]$ un polynôme irréductible de degré $d\geqslant 2$. Rappelons $\alpha_d,\beta_d$ dans la Proposition \ref{prop:discrepancy}. Alors nous avons
	$$\sum_{\substack{l,m\in\NN,F(l)\equiv 0[m]\\m^2G(A-\frac{l}{m})\leqslant X}}\textbf{1}_{]\vartheta_2,\vartheta_1]}\left(A-\frac{l}{m}\right)=X^\frac{1}{2}\left(Z_F \int_{\vartheta_2}^{\vartheta_1}\frac{\operatorname{d}x}{\sqrt{G(x)}}+O\left(\frac{(\log\log X)^{\frac{\alpha_d}{2}}}{(\log X)^{\frac{\beta_d}{2}}}\right)\right).$$
\end{proposition}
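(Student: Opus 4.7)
Le plan est de décomposer la somme selon les classes résiduelles $v$ de $l$ modulo $m$, puis d'appliquer la proposition \ref{prop:discrepancy} sur une partition fine de $\mathopen]\vartheta_2,\vartheta_1\mathclose[$. L'équilibre entre l'erreur de discrépance (cumulée sur les morceaux) et l'erreur d'approximation de $G$ par une constante sur chacun d'eux produira la réduction par $\tfrac{1}{2}$ des exposants $\alpha_d,\beta_d$ qui apparaît dans le terme d'erreur final.

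D'abord j'écrirai chaque $l$ apparaissant dans la somme sous la forme $l=v+km$ avec $0\leqslant v<m$ et $k=\lfloor l/m\rfloor\in\NN$ : la congruence $F(l)\equiv 0[m]$ équivaut à $F(v)\equiv 0[m]$, et la contrainte $A-l/m\in\mathopen]\vartheta_2,\vartheta_1\mathclose[$ impose à $l/m$ de se situer dans l'intervalle $J:=\mathopen]A-\vartheta_1,A-\vartheta_2\mathclose[$, qui étant de longueur $<1$ ne rencontre qu'au plus deux intervalles entiers consécutifs $\mathopen[k,k+1\mathclose[$. La somme se ramène ainsi à une somme finie sur $k$, chacune comptant les couples $(v,m)$ avec $F(v)\equiv 0[m]$, $0\leqslant v<m$, $v/m\in \tilde{J}_k:=(J-k)\cap\mathopen[0,1\mathclose[$, et $m\leqslant \sqrt{X/G_k(v/m)}$, où $G_k(t):=G(A-t-k)$. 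Je fixerai ensuite un paramètre $\delta>0$ et partitionnerai $\tilde{J}_k$ en intervalles $I_j$ de longueur $\delta$ et de milieu $x_j$. Puisque $G$ est continue sur le compact $\mathopen[\vartheta_2,\vartheta_1\mathclose]$ (et, dans l'application, suffisamment régulière pour que l'on ait un module de continuité en $O(\delta)$), le seuil $\sqrt{X/G_k(v/m)}$ diffère de $M_j:=\sqrt{X/G_k(x_j)}$ d'au plus $O(\sqrt{X}\delta)$ sur $I_j$. La proposition \ref{prop:discrepancy} appliquée avec l'intervalle $I_j$ et le seuil $M_j\asymp\sqrt{X}$ donne alors
$$\sum_{m\leqslant M_j}\sum_{\substack{F(v)\equiv 0[m]\\ 0\leqslant v\leqslant m-1}}\mathbf{1}_{I_j}\!\left(\frac{v}{m}\right)=Z_F|I_j|M_j+O\!\left(\sqrt{X}\,\frac{(\log\log X)^{\alpha_d}}{(\log X)^{\beta_d}}\right).$$

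En sommant les termes principaux sur $j$, la somme $\sum_j|I_j|M_j$ forme une somme de Riemann de $\sqrt{X}\int_{\tilde{J}_k}\operatorname{d}t/\sqrt{G_k(t)}$, et le changement de variable $x=A-t-k$ suivi de la sommation sur les (en nombre borné) valeurs admissibles de $k$ donne précisément le terme principal $Z_F\sqrt{X}\int_{\vartheta_2}^{\vartheta_1}\operatorname{d}x/\sqrt{G(x)}$. L'erreur totale, après sommation sur les $O(1/\delta)$ morceaux $I_j$ et en y ajoutant la contribution de l'imprécision du seuil, se majore par
$$O\!\left(\frac{\sqrt{X}}{\delta}\,\frac{(\log\log X)^{\alpha_d}}{(\log X)^{\beta_d}}+\sqrt{X}\,\delta\right),$$
et le choix optimal $\delta=(\log\log X)^{\alpha_d/2}/(\log X)^{\beta_d/2}$ fournit la majoration annoncée.

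La difficulté principale sera le traitement rigoureux de la \og bande d'incertitude\fg{} de largeur $O(\sqrt{X}\delta)$ autour du seuil $M_j$ : le nombre de couples $(v,m)$ y appartenant doit être contrôlé par une seconde application de la proposition \ref{prop:discrepancy} appliquée à deux seuils proches, afin que l'estimation reste uniforme en $M_j$ quand $j$ parcourt les $O(1/\delta)$ morceaux. C'est précisément l'équilibre entre cette erreur d'approximation de $G$ (en $\sqrt{X}\delta$) et l'erreur de discrépance cumulée (en $\sqrt{X}(\log\log X)^{\alpha_d}/[\delta(\log X)^{\beta_d}]$) qui produit la réduction de $\alpha_d,\beta_d$ à $\alpha_d/2,\beta_d/2$ dans l'exposant final.
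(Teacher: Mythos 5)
Your plan is correct and matches the paper's argument in all essentials: pass to the fractional part of $l/m$, cut $\mathopen]\vartheta_2,\vartheta_1\mathclose[$ into mesh-$\delta$ pieces on which $G$ is nearly constant, apply Proposition~\ref{prop:discrepancy} on each piece, and balance the cumulated discrepancy error $\sqrt{X}\,\delta^{-1}(\log\log X)^{\alpha_d}/(\log X)^{\beta_d}$ against the threshold/Riemann-sum error $\sqrt{X}\,\delta$. Your optimization $\delta=(\log\log X)^{\alpha_d/2}/(\log X)^{\beta_d/2}$ is the correct balance and yields the stated bound; the paper, as written, fixes the mesh as $(\log\log X)^{\beta_d/2}/(\log X)^{\alpha_d/2}$ (the two exponents transposed), which would make the cumulated discrepancy error diverge, so your bookkeeping actually corrects a small slip in the source. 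One caveat to make explicit in a full writeup: the $O(\delta)$ control on the threshold perturbation requires a Lipschitz-type modulus of continuity for $1/\sqrt{G}$, not merely continuity of $G$ --- the paper makes the same tacit assumption when asserting that $c_0$ in \eqref{eq:interestimate1} is absolute, and it is harmless in the application since $G$ is smooth and bounded away from zero on $[\vartheta_2,\vartheta_1]$.
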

\begin{proof}
	Fixons $\alpha=\frac{\alpha_d}{2},\beta=\frac{\beta_d}{2}$. Divisons l'intervalle $]\vartheta_2,\vartheta_1]$ en $O\left( \frac{(\log X)^\alpha}{(\log \log X)^\beta}\right)$ sous-intervalles $\{J_k\}_k$ où
	$$J_k=\left]r_k,r_{k+1}\right],\quad r_{k+1}=r_k+\frac{(\log \log X)^\beta}{(\log X)^\alpha}$$
	et définissons la fonction en escalier $H$ par
	$$H(x)=\min_{y\in J_k}G(y),\quad \text{pour }x\in J_k.$$
	Puisque $G$ est uniformément continue sur $[\vartheta_2,\vartheta_1]$, il existe $c_0>0$ une constante absolue telle que
\begin{equation*}
	0\leqslant \sup_{x\in [\vartheta_2,\vartheta_1]}\left(\sqrt{G(x)^{-1}}-\sqrt{H(x)^{-1}}\right)<c_0\frac{(\log \log X)^\beta}{(\log X)^\alpha}.
\end{equation*}
On obtient donc
\begin{equation}\label{eq:interestimate1}
\begin{split}
&\sum_k \frac{|J_k|}{\sqrt{H(r_k)}}-\int_{\vartheta_2}^{\vartheta_1}\frac{\operatorname{d}x}{\sqrt{G(x)}}\\
&=\sum_k\left(\int_{r_k}^{r_{k+1}}(\sqrt{H(r_k)^{-1}}-\sqrt{G(x)^{-1}})\operatorname{d}x\right)\\
&=O\left((\sum_k |J_k|)\frac{(\log\log X)^\beta}{(\log X)^\alpha}\right)=O\left(\frac{(\log\log X)^\beta}{(\log X)^\alpha}\right).
\end{split}
\end{equation}
	Nous avons, grâce à la Proposition \ref{prop:discrepancy} et \eqref{eq:interestimate1},
	\begin{align*}
	&\sum_{\substack{l,m\in\NN_{\geqslant 1},F(l)\equiv 0[m]\\m^2H(A-\frac{l}{m})\leqslant X}}\textbf{1}_{]\vartheta_2,\vartheta_1]}\left(A-\frac{l}{m}\right)\\
	&=	\sum_{\substack{l,m\in\NN_{\geqslant 1},F(l)\equiv 0[m]\\m^2H(A-\frac{l}{m})\leqslant X}}\sum_{k}\textbf{1}_{J_k}\left(A-\frac{l}{m}\right)\\
	&=\sum_{\substack{l,m\in\NN_{\geqslant 1},F(l)\equiv 0[m]\\m^2H(A-\frac{l}{m})\leqslant X}}\sum_{k}\textbf{1}_{A-J_k}\left(\frac{l}{m}\right)\\
	&=\sum_{k}\sum_{\substack{l,m\in\NN,F(l)\equiv 0[m]\\m\leqslant X^\frac{1}{2}/\sqrt{H(r_k)}}}\textbf{1}_{A-J_k}\left(\frac{l}{m}\right)\\
	&=Z_F X^\frac{1}{2} \left(\sum_{k}\frac{|J_k|}{\sqrt{H(r_k)}}\right)+O\left( X^\frac{1}{2} \sum_{k} \frac{(\log\log X)^{\alpha_d}}{(\log X)^{\beta_d}}\right)\\
	&=Z_F X^\frac{1}{2}\int_{\vartheta_2}^{\vartheta_1}\frac{\operatorname{d}x}{\sqrt{G(x)}}+O\left(X^\frac{1}{2}\frac{(\log\log X)^{\alpha}}{(\log X)^{\beta}}\right).
	\end{align*}
	Or nous avons aussi, en appliquant encore la Proposition \ref{prop:discrepancy} et le raisonnement ci-dessus pour la somme entre la plus grosse parenthèse ci-dessous,
	\begin{align*}
	&\sum_{\substack{l,m\in\NN_{\geqslant 1},F(l)\equiv 0[m]\\m^2G(A-\frac{l}{m})\leqslant X}}\textbf{1}_{]\vartheta_2,\vartheta_1]}\left(A-\frac{l}{m}\right)\\
	=&\sum_{\substack{l,m\in\NN_{\geqslant 1},F(l)\equiv 0[m]\\m^2H(A-\frac{l}{m})\leqslant X}}\textbf{1}_{]\vartheta_2,\vartheta_1]}\left(A-\frac{l}{m}\right)\\
	&+O\left(\sum_{\substack{X^\frac{1}{2}/(\sqrt{H\left(A-\frac{l}{m}\right)}+c_0\frac{(\log \log X)^\beta}{(\log X)^\alpha})<m\leqslant X^\frac{1}{2}/\sqrt{H\left(A-\frac{l}{m}\right)}\\F(l)\equiv 0[m]}}\textbf{1}_{A-]\vartheta_2,\vartheta_1]}\left(\frac{l}{m}\right)\right)\\
	=&Z_F X^\frac{1}{2}\int_{\vartheta_2}^{\vartheta_1}\frac{\operatorname{d}x}{\sqrt{G(x)}}+O\left(X^\frac{1}{2}\frac{(\log\log X)^{\alpha}}{(\log X)^{\beta}}\right).
	\end{align*}
	Ceci achève la preuve de la formule énoncée.
\end{proof}
\subsubsection{Dénouement}
Nous appliquons la Proposition \ref{po:centralcouting} pour estimer l'ensemble $\mathcal{C}_{C_3,D}^{\underline{\varepsilon},\underline{\theta}}(\mathbf{D},\mathbf{h},\mathbf{e},q)$ dans le Lemme \ref{le:decomp4}. 

\begin{corollary}
	Il existe une constante $\Gamma=\Gamma_{C_3,D}(\mathbf{D},\mathbf{h},\mathbf{e},q)>0$ telle que 
\begin{equation}\label{eq:decomp5}
\begin{split}
	&\# \mathcal{C}_{C_3,D,W}^{\underline{\varepsilon},\underline{\theta}}(\mathbf{D},\mathbf{h},\mathbf{e},q)\\
	=&B^{\frac{1}{5}}\Gamma Z_{\mathcal{F}}\int\int\chi_{\underline{\varepsilon},\underline{\theta}}(w,z)\frac{\mathfrak{E}_{C_3,D,\mathbf{D},W}(zw\sqrt{z+w})\operatorname{d}z\operatorname{d}w}{zw\sqrt{z+w}}+O\left(B^\frac{1}{5}\frac{(\log\log B)^\frac{5}{6}}{(\log B)^\frac{2-\sqrt{2}}{6}}\right),
	\end{split}
\end{equation}
où $$\mathfrak{E}_{C_3,D,\mathbf{D},W}(x)=\textbf{1}\left\{x:x>\frac{D^\frac{5}{2}C_3^\frac{1}{2}W^3}{D_1^2D_2^2}\right\},$$
$\chi_{\underline{\varepsilon},\underline{\theta}}$ est définie par \eqref{eq:chi}, $\mathcal{F}(X)=\mathcal{F}_{C_3,D,\mathbf{h}\mathbf{D},\mathbf{e},q}(X)$ est le polynôme \eqref{eq:formF} et la constant $Z_{\mathcal{F}}$ est définie dans la Proposition \ref{le:varrhoF}.
\end{corollary}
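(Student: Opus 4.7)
Le plan est d'appliquer directement la proposition \ref{po:centralcouting} à la congruence \eqref{eq:condcongtocount} avec le polynôme $\mathcal{F}$ défini en \eqref{eq:formF}. Comme on a supposé que la région $R(\varepsilon_i,\theta_j)$ est à distance positive de la courbe $\mathfrak{C}$, en particulier disjointe de l'ensemble mince $M$, les discussions du paragraphe \ref{se:otherregions} (appuyées sur la proposition \ref{le:splitpoly}) assurent qu'au moins l'un de $C_3,-D$ n'est pas un carré. Le polynôme $\mathcal{F}(X)$ est ainsi irréductible sur $\ZZ[X]$ et la proposition \ref{le:varrhoF} s'applique avec la constante principale $Z_\mathcal{F}$.

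Il s'agit ensuite de traduire les conditions \eqref{eq:conditionE} et \eqref{eq:threshold} sous la forme $m'^2\,G(A-l'/m')\leqslant X$ exigée par la proposition \ref{po:centralcouting}. Grâce aux relations \eqref{eq:xylm}, le rapport $u/v$ s'exprime comme une fonction affine de $l'/m'$, et la contrainte de pente $\theta_2<u/v<\theta_1$ se traduit par un intervalle borné $]\vartheta_2,\vartheta_1[\subset\;]0,1[$ pour une quantité du type $A-l'/m'$, où $A$ ne dépend que des paramètres discrets $C_3,D,W,D_ih_i,e_j,q$. En exprimant $(w,z)$ via \eqref{eq:wz} en fonction de $(u,v)$ puis de $(l',m')$, les conditions d'encadrement en $B^{-2/5}$ de \eqref{eq:conditionE} se réécrivent $m'^{2}\,G(A-l'/m')\leqslant X$ avec $X\asymp B^{2/5}$ (d'où $X^{1/2}=B^{1/5}$), pour une fonction continue strictement positive $G$ sur $]\vartheta_2,\vartheta_1[$, construite à partir de la distance \eqref{eq:distance2}.

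La proposition \ref{po:centralcouting} livre alors
$$\# \mathcal{C}_{C_3,D,W}^{\varepsilon_i,\theta_j}(D_ih_i,e_j,q)=Z_\mathcal{F}\,X^{1/2}\int_{\vartheta_2}^{\vartheta_1}\frac{dx}{\sqrt{G(x)}}+O\!\left(B^{1/5}\frac{(\log\log B)^{5/6}}{(\log B)^{(2-\sqrt{2})/6}}\right).$$
Une substitution bidimensionnelle de $(l',m')$ vers $(w,z)$, via la composée $(l',m')\mapsto(u,v)\mapsto\varrho\circ\Psi$, transforme la mesure unidimensionnelle $dx/\sqrt{G(x)}$ en la mesure bidimensionnelle $\mathcal{E}\,dw\,dz/(zw\sqrt{z+w})$, le facteur $\mathcal{E}=\mathcal{E}_{C_3,D}(D_ih_i,e_j,q)>0$ rassemblant tous les jacobiens et facteurs discrets. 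L'indicatrice de seuil $\mathfrak{E}_{C_3,D,D_i,W}$ figure dans l'intégrand précisément parce que la condition de hauteur \eqref{eq:conditionEH} est remplacée par la condition de seuil \eqref{eq:threshold1} au prix du reste \eqref{eq:errorterm}, absorbé dans le terme d'erreur par les lemmes \ref{le:threshold0} et \ref{le:threshold1}. L'obstacle principal sera la vérification soigneuse du jacobien $\mathcal{E}$ et l'identification de la densité $1/(zw\sqrt{w+z})$, qui reflète le fait que $m'$ est essentiellement proportionnel à $\sqrt{wz\sqrt{w+z}}\cdot B^{1/5}$ en vertu de la traduction de la condition de seuil par le lemme \ref{le:threshold0}.
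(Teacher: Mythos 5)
Votre plan suit essentiellement la preuve du papier : appliquer la proposition \ref{po:centralcouting} avec le polynôme $\mathcal{F}$, une fonction $G$ construite via \eqref{eq:lprimemprime2} et les bornes \eqref{eq:lprimemprime1}, puis réinterpréter l'intégrale unidimensionnelle comme une intégrale bidimensionnelle en $(w,z)$ avec la densité $1/(zw\sqrt{z+w})$ et faire ressortir $\mathcal{E}$ comme jacobien.

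Un point mérite toutefois d'être corrigé : votre justification de l'irréductibilité de $\mathcal{F}$ est attribuée à la mauvaise raison. Vous invoquez le fait que $R(\varepsilon_i,\theta_j)$ est à distance positive de $\mathfrak{C}$, donc disjointe de $M$, donc (via la proposition \ref{le:splitpoly} et la discussion de \S\ref{se:otherregions}) au moins un de $C_3, -D$ n'est pas un carré. Mais ces deux conditions sont indépendantes : la distance à $\mathfrak{C}$ est une condition de seuil, pas une condition d'évitement de $M$. La vraie raison est plus simple et plus directe : la corollaire est démontrée dans la région $S_1$ (sous-région de $R_1$), où le lemme \ref{le:au2-bv2} garantit $au^2-bv^2>0$, donc $D>0$, donc $-D<0$ n'est jamais un carré. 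Le polynôme $\mathcal{F}$ est alors de discriminant négatif, donc irréductible, sans avoir besoin de la discussion des régions $S_3,S_4$ ni de l'hypothèse de distance. La conclusion que vous tirez reste exacte, mais le raisonnement tel quel serait circulaire si l'on essayait de l'appliquer aux régions $S_3,S_4$ (où la distinction $M$ vs.\ complémentaire de $M$ est effectivement celle qui gouverne l'irréductibilité). Votre proposition n'identifie pas non plus la valeur explicite $\mathcal{E}=D^{1/2}/(2C_3^{1/2}e_3\,g(D_1h_1D_2h_2))$ que donne le papier via le calcul du jacobien, mais vous signalez correctement que c'est là que réside le travail technique restant.
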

\begin{proof}
	Quitte à décomposer la région $R(\underline{\varepsilon},\underline{\theta})$ \eqref{eq:chi} en des petites pièces, on peut supposer pour la suite que tous les points dans $	\mathcal{C}_{C_3,D,W}^{\underline{\varepsilon},\underline{\theta}}(\mathbf{D},\mathbf{h},\mathbf{e},q)$ \eqref{eq:decomp4} vérifient la condition de seuil donnée par les équations \eqref{eq:threshold} dans les Lemmes \ref{le:threshold0} et \ref{le:threshold1} pour $B\gg_{C_3,D,\underline{\varepsilon},\underline{\theta},} 1$. 
	
	Écrivons par simplicité $$g_1=g(D_1h_1),\quad g_2=g(D_2h_2).$$
	Avec le changement de paramètres \eqref{eq:xylm}, nous avons 
	$$v=e_1e_2e_3m^\prime-u=e_3g_1g_2l^\prime-e_1e_2e_3(\gamma_1g_1-1)m^\prime,$$ et d'où
	\begin{equation}\label{eq:lprimemprime2}
	\frac{C_3}{D}\frac{uv^2}{u+v}=\frac{C_3e_3^2g_1^3g_2^3}{De_1e_2}(m^\prime)^2\left(\frac{e_1e_2\gamma_1}{g_2}-\frac{l^\prime}{m^\prime}\right)\left(\frac{e_1e_2}{g_1g_2}-\left(\frac{e_1e_2\gamma_1}{g_2}-\frac{l^\prime}{m^\prime}\right)\right)^2.
	\end{equation}
	Quitte à rajouter un terme d'erreur d'ordre grandeur $O(1)$, nous pouvons réécrire \eqref{eq:condzoom1} comme
	\begin{equation}\label{eq:lprimemprime1}
	\frac{e_1e_2\gamma_1}{g_2}-\frac{e_1e_2}{g_1g_2}\frac{\theta_1}{1+\theta_1}\leqslant\frac{l^\prime}{m^\prime}<\frac{e_1e_2\gamma_1}{g_2}-\frac{e_1e_2}{g_1g_2}\frac{\theta_2}{1+\theta_2};
	\end{equation}
	$$\varepsilon_2 (m^\prime)^2 G\left(A-\frac{l^\prime}{m^\prime}\right)< B^\frac{2}{5}\leqslant \varepsilon_1 (m^\prime)^2 G\left(A-\frac{l^\prime}{m^\prime}\right).$$
	Remarquons que $e_1e_2\mid g_1g_2$ et $\gamma_1g_1\geqslant 1$. Nous avons donc
	$$\frac{e_1e_2\gamma_1}{g(h_2D_2)}-\frac{l^\prime}{m^\prime}\subset \left]\frac{e_1e_2}{g_1g_2}\frac{\theta_2}{1+\theta_2},\frac{e_1e_2}{g_1g_2}\frac{\theta_1}{1+\theta_1}\right[\subset\mathopen]0,1\mathclose[,$$
	$$\frac{e_1e_2\gamma_1}{g_2}\geqslant \frac{e_1e_2}{g_1g_2}>\frac{e_1e_2}{g_1g_2}\frac{\theta}{1+\theta},\quad \forall \theta\geqslant 0.$$
	Nous appliquons la Proposition \ref{po:centralcouting} avec 
	$$G(x)=\frac{C_3e_3^2g(h_1D_1h_2D_2)^3}{De_1e_2}x\left(\frac{e_1e_2}{g_1g_2}-x\right)^2,\quad F(Y)=\mathcal{F}_{C_3,D,\mathbf{h}\mathbf{D},\mathbf{e},q}(Y)$$
	$$A=\frac{e_1e_2\gamma_1}{g_2},\quad \vartheta_i=\frac{e_1e_2}{g_1g_2}\frac{\theta_i}{1+\theta_i},\quad X=\frac{B^\frac{2}{5}}{\varepsilon_i},\quad (i=1,2),$$
	Il en découle que
	\begin{align*}
		&\# \mathcal{C}_{C_3,D,W}^{\underline{\varepsilon},\underline{\theta}}(\mathbf{D},\mathbf{h},\mathbf{e},q)\\
		&=\left(\sum_{\substack{l^\prime,m^\prime\in\NN,F(l^\prime)\equiv 0[m^\prime]\\(m^\prime)^2\varepsilon_2G(A-\frac{l^\prime}{m^\prime})\leqslant B^\frac{2}{5}}}-\sum_{\substack{l^\prime,m^\prime\in\NN,F(l^\prime)\equiv 0[m^\prime]\\(m^\prime)^2\varepsilon_1G(A-\frac{l^\prime}{m^\prime})\leqslant B^\frac{2}{5}}}\right)\textbf{1}_{[\vartheta_2,\vartheta_1]}\left(A-\frac{l^\prime}{m^\prime}\right)\\
		&=B^{\frac{1}{5}}\frac{(De_1e_2)^\frac{1}{2}Z_\mathcal{F}}{(C_3e_3^2g(h_1D_1h_2D_2)^3)^\frac{1}{2}}\left(\frac{1}{\sqrt{\varepsilon_2}}-\frac{1}{\sqrt{\varepsilon_1}}\right)\int_{\vartheta_1}^{\vartheta_2}\frac{\operatorname{d}x}{\sqrt{x\left(\frac{e_1e_2}{g_1g_2}-x\right)^2}}\\
		&\quad +O\left(B^\frac{1}{5}\frac{(\log\log B)^\frac{5}{6}}{(\log B)^\frac{2-\sqrt{2}}{6}}\right)\\
		&=B^{\frac{1}{5}}\frac{D^\frac{1}{2}Z_\mathcal{F}}{2C_3^\frac{1}{2}e_3g(D_1h_1D_2h_2)}\int_{\varepsilon_2}^{\varepsilon_1}\int_{\theta_2}^{\theta_1}\frac{\operatorname{d}z\operatorname{d}\theta}{z^\frac{3}{2}\sqrt{\theta(1+\theta)} }+O\left(B^\frac{1}{5}\frac{(\log\log B)^\frac{5}{6}}{(\log B)^\frac{2-\sqrt{2}}{6}}\right)\\
		&=B^{\frac{1}{5}}\frac{D^\frac{1}{2}Z_\mathcal{F}}{2C_3^\frac{1}{2}e_3g(D_1h_1D_2h_2)}\int\int_{\substack{\varepsilon_2\leqslant z\leqslant\varepsilon_1\\\theta_2<\frac{z}{w}<\theta_1}}\frac{\operatorname{d}z\operatorname{d}w}{zw\sqrt{z+w}}+O\left(B^\frac{1}{5}\frac{(\log\log B)^\frac{5}{6}}{(\log B)^\frac{2-\sqrt{2}}{6}}\right).
	\end{align*}
	Nous obtenons donc la formule énoncée avec
\begin{equation*}
 \Gamma_{C_3,D}(\mathbf{D},\mathbf{h},\mathbf{e},q)=\frac{D^\frac{1}{2}}{2C_3^\frac{1}{2}e_3g(D_1h_1D_2h_2)}.\qedhere
\end{equation*}
\end{proof}

	Il nous reste à traiter le terme d'erreur introduit dans \eqref{eq:seuilreplaceheight} du Corollaire \ref{le:decomp0}. Le même raisonnement comme ci-dessus nous donne l'estimation
\begin{equation}\label{eq:errortermcontrol}
\begin{split}
&\# \operatorname{Er}^{\underline{\varepsilon},\underline{\theta}}_{C_3,D,W}(\mathbf{D})\\&\ll_{C_3,D,W,\varepsilon_i}
B^{\frac{1}{5}}\int\int \chi\left(R(\underline{\varepsilon},\underline{\theta})\cap\Omega \right)\frac{\operatorname{d}z\operatorname{d}w}{zw\sqrt{z+w}}
+O\left(B^\frac{1}{5}\frac{(\log\log B)^\frac{5}{6}}{(\log B)^\frac{2-\sqrt{2}}{6}}\right)\\
&=O\left(B^\frac{1}{5}\frac{(\log\log B)^\frac{5}{6}}{(\log B)^\frac{2-\sqrt{2}}{6}}\right),
\end{split}
\end{equation}
où 
$$\Omega=\{(w,z):\frac{D^\frac{5}{2}C_3^\frac{1}{2}W^3}{D_1^2D_2^2}\leqslant zw\sqrt{z+w}\leqslant \frac{D^\frac{5}{2}C_3^\frac{1}{2}W^3}{D_1^2D_2^2}+\mu_0 (B^{-\frac{2}{5}})\}.$$
Rassemblons les égalités \eqref{eq:decomp1}, \eqref{eq:seuilreplaceheight}, \eqref{eq:decomp2}, \eqref{eq:decomp3}, \eqref{eq:decomp4}, \eqref{eq:decomp5} et \eqref{eq:errortermcontrol}, nous sommes prêt à démontrer le Théorème \ref{th:principalthm}.
\begin{proof}[Démonstration du Théorème \ref{th:principalthm}]
	Dans ce qui suit, pour éviter des formules superflues, on écrira le terme d'erreur sous la forme non-explicite $o(B^\frac{1}{5})$ car il sera clair d'où viennent ces contributions.
	Soient $C_3,D,W,D_1,D_2,q\in\NN$ tels que
	$$\pgcd(C_3W,D)=\pgcd(D_1,D_2)=1,\quad D_1D_2\mid D,\quad q\mid W.$$
	 En les fixant et en sommant sur les formules \eqref{eq:decomp3}, \eqref{eq:decomp4}, \eqref{eq:decomp5}, nous obtenons la formule pour le cardinal de l'ensemble $B_{C_3,D,W}^{\underline{\varepsilon},\underline{\theta}}(\mathbf{D},q)$ comme suit.
	Il existe une constante 
	\begin{align*}
	&Z_{C_3,D,W}(\mathbf{D},q)=\sum_{\substack{h_1,h_2\in\NN_{\geqslant 1}\\D_1h_1,D_2h_2\mid D\\ \pgcd(h_1,h_2)=1}} \mu(h_1)\mu(h_2)\\
	&\quad \times\sum_{\substack{e_1e_2\mid g(h_1D_1h_2D_2),\pgcd(e_1,e_2)=1\\e_1e_2e_3\mid qD,\pgcd(e_3,g(h_1D_1h_2D_2))=1}}\left(\prod_{j=1}^{3}\mu(\mathbf{e})\right)\frac{Z_\mathcal{F}D^\frac{1}{2}}{2C_3^\frac{1}{2}e_3g(D_1h_1D_2h_2)}
	\end{align*}
 telle que
	\begin{align*}
		&\# B_{C_3,D,W}^{\underline{\varepsilon},\underline{\theta}}(\mathbf{D},q)\\
		&=B^\frac{1}{5}Z_{C_3,D,W}(\mathbf{D},q)\int\int\chi_{\underline{\varepsilon},\underline{\theta}}(w,z)\frac{\mathfrak{E}_{C_3,D,\mathbf{D},W}\left(zw\sqrt{z+w}\right)}{zw\sqrt{z+w}}\operatorname{d}z\operatorname{d}w+o(B^\frac{1}{5}).
	\end{align*}
Nous concluons de \eqref{eq:decomp2} qu'en fixant $C_3,D,W,D_1,D_2$,
\begin{align*}
	&\# \widetilde{E}^{\underline{\varepsilon},\underline{\theta}}_{C_3,D,W}(\mathbf{D})=\sum_{q\mid W} \mu\left(\frac{W}{q}\right)B_{C_3,D,W}^{\underline{\varepsilon},\underline{\theta}}\left(\mathbf{D},q\right)\\
	&=B^\frac{1}{5}\int\int\chi_{\underline{\varepsilon},\underline{\theta}}(w,z)\frac{\mathbf{E}_{C_3,D,\mathbf{D},W}\left(zw\sqrt{z+w}\right)}{zw\sqrt{z+w}}\operatorname{d}z\operatorname{d}w+o(B^\frac{1}{5}),
\end{align*}
où $$\mathbf{E}_{C_3,D,\mathbf{D},W}(x)=\sum_{q\mid W}\mu\left(\frac{W}{q}\right)Z_{C_3,D,W}\left(\mathbf{D},q\right)\mathfrak{E}_{C_3,D,\mathbf{D},q}(x).$$
En reportant dans \eqref{eq:decomp1} et \eqref{eq:seuilreplaceheight}, rappelons l'estimation \eqref{eq:errortermcontrol}, nous obtenons finalement 
\begin{align*}
	\delta_{\varrho^{-1}(S_1)\cap U,Q,B,\frac{5}{2}}(\chi_{\underline{\varepsilon},\underline{\theta}})&=\sum_{\substack{C_3,D,W\in\NN_{\geqslant 1}\\\pgcd(C_3W,D)=1}}\sum_{\substack{D_1D_2\mid D\\\pgcd(D_1,D_2)=1}}\# E^{\underline{\varepsilon},\underline{\theta}}_{C_3,D,W}(\mathbf{D})\\
	&=B^\frac{1}{5}\int\int\chi_{\underline{\varepsilon},\underline{\theta}}(w,z)\frac{\mathds{E}\left(zw\sqrt{z+w}\right)}{zw\sqrt{z+w}}\operatorname{d}z\operatorname{d}w+o(B^\frac{1}{5}).
\end{align*}
où \begin{equation}\label{eq:E}
\mathds{E}(x)=\sum_{\substack{C_3,D,W\in\NN_{\geqslant 1}\\\pgcd(C_3W,D)=1}}\sum_{\substack{D_1D_2\mid D\\\pgcd(D_1,D_2)=1}}\mathbf{E}_{C_3,D,\mathbf{D},W}(x).\qedhere
\end{equation}
\end{proof} 
\subsection{Autre régions}\label{se:otherregions}
Dans les régions $S_2,S_3,S_4$ (\eqref{eq:S2}--\eqref{eq:S4}), la situation est similaire, malgré un changement mineur de hauteur et de distance. 
Nous rappelons le polynôme $\mathcal{F}(X)$ \eqref{eq:formF}. En écrivant
\begin{align*}
	&\frac{qC_3g(h_1D_1h_2D_2)^2e_3}{e_1e_2}X^2+\frac{qD}{e_1e_2e_3}\\
	=&\frac{q}{e_1e_2e_3}\left(C_3(g(h_1D_1h_2D_2)e_3X)^2+D\right)=\frac{q}{e_1e_2e_3}\mathcal{G}(X),
\end{align*}
le (non-)scindage du polynôme $C_3X^2+D$ équivaut à celui de $\mathcal{G}(X)$.
Rappelons la Proposition \ref{le:splitpoly} et l'observation dans \S\ref{se:otherregions}, les points rationnels dans la partie mince $M$  \eqref{eq:thinsetM} correspond à la famille d'équations $\mathcal{E}_{C_3,D}$ \eqref{eq:eqpellfermatgen} avec $-C_3D=\square$.
Alors en dehors de $M$, au moins un des $C_3,-D\neq \square$. Donc ces polynômes restent irréductibles et nous conduisent toujours au problème du congruence quadratique.
Nous esquissons ici le résultat et nous ne rentrerons pas dans le détail puisque la méthode et les calculs sont presque pareils. 
\subsubsection{Région $S_2$} 
Le Lemme \ref{le:au2-bv2} nous dit que pour $au^2-bv^2>0$ pour $(a,b)\times(u,v)\in T_2$. En reportant dans \eqref{eq:eqpellfermatgen}, nous avons toujours $D>0$.
Donc le résultat sous-entendu ressemble au Théorème \ref{th:principalthm}.
\begin{theorem}\label{th:S2}
	Pour toute $f\in\CBY$ (cf. \S\ref{se:zoomoper}), on a
	\begin{align*}
	&\delta_{\varrho^{-1}(S_2)\cap U,Q,B,\frac{5}{2}}(f)\\
	=&
	B^\frac{1}{5}\left(\int f(w,z)  \frac{\mathds{E}_{S_2}^\prime(-wz\sqrt{z+w})}{-wz\sqrt{z+w}}\operatorname{d}w\operatorname{d}z +O_{f}\left(\frac{(\log\log B)^\frac{5}{6}}{(\log B)^\frac{2-\sqrt{2}}{6}}\right)\right),
	\end{align*}
	où $\mathds{E}_{S_i}^\prime(\cdot)$ est une fonction en escalier définie de façon analogue à $\mathds{E}(\cdot)$ \eqref{eq:E}.
\end{theorem}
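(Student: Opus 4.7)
Mon plan pour démontrer le théorème \ref{th:S2} consiste à suivre fidèlement l'approche du théorème \ref{th:principalthm}, en adaptant chaque étape au paramétrage de la région $S_2$ par l'ensemble $T_2$ \eqref{eq:T2}. Observons d'abord que dans $S_2$ nous avons $w<0<z$ avec $wz+w+z>0$, ce qui entraîne $|w|<z$, donc $w+z>0$ et $-wz>0$. D'après le lemme \ref{le:au2-bv2}, le paramètre $D$ dans l'équation $\mathcal{E}_{C_3,D}$ reste \emph{positif}, de sorte que le polynôme $\mathcal{F}(X)$ \eqref{eq:formF} demeure irréductible hors de l'ensemble mince $M$ et que l'analyse de congruence de la section \ref{se:congHooley} s'applique sans modification.

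La première étape sera d'identifier, pour $P=[x:y]\times[s:t]$ avec $\varrho\circ\Psi(P)\in S_2$, le monôme réalisant le maximum parmi les sections globales \eqref{eq:basisofglobalsec}, afin d'écrire explicitement la hauteur par rapport aux paramètres $(a,b,u,v)$ et de confirmer que la distance dominante est $|z|$ puisque $|z|>|w|$. J'en déduirai, par un calcul analogue au lemme \ref{le:threshold0}, l'équation de seuil
\[
(-w_0)z_0\sqrt{w_0+z_0}>\frac{D^{5/2}C_3^{1/2}W^3}{D_1^2D_2^2},\qquad (w_0,z_0)=B^{2/5}(w,z),
\]
le signe négatif provenant de ce que $u/v=z/w<0$ dans $T_2$, ce qui renverse le signe du terme correspondant dans l'estimation.

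Ensuite je reproduirai pas à pas les étapes du \S\ref{se:criticalmeasure} : inversions de Möbius successives pour éliminer d'abord la condition \eqref{eq:Wdiv}, puis les divisibilités \eqref{eq:D1D2div} et la coprimalité \eqref{eq:coprime1} ; changements de variables \eqref{eq:changeofvar1} et \eqref{eq:lprimemprime} ramenant le problème à la résolution de $\mathcal{F}(l')\equiv 0\,[m']$ sous des contraintes analogues à \eqref{eq:lprimemprime1}--\eqref{eq:lprimemprime2} ; application finale de la proposition \ref{po:centralcouting} à une fonction $G$ construite de manière parallèle à celle du cas $S_1$. Le changement de variable $(l',m')\to(w,z)$ produira alors une intégrale dont l'intégrande fait apparaître $(-wz\sqrt{z+w})^{-1}$ au lieu de $(wz\sqrt{z+w})^{-1}$, conformément à l'énoncé ; le terme d'erreur se contrôlera comme en \eqref{eq:errortermcontrol}.

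La principale difficulté — purement technique — sera la gestion minutieuse des signes : avec $u>-v>0$ dans $T_2$, on a $u+v>0$ mais $bv-au<0$, et le rapport $u/v$ est négatif. Il faudra travailler systématiquement en valeurs absolues et vérifier à chaque étape que les identités du cas $S_1$ (en particulier les relations \eqref{eq:pgcd2} et l'expression \eqref{eq:lprimemprime2}) se transposent correctement au contexte $S_2$. La fonction $\mathds{E}'$ s'obtiendra finalement par la même construction que $\mathds{E}$ \eqref{eq:E}, en sommant sur les mêmes quintuplets $(C_3,D,W,D_1,D_2)$ les contributions des constantes $Z_{\mathcal{F}}$ attachées aux polynômes $\mathcal{F}_{C_3,D,h_iD_i,e_j,q}$, lesquels restent irréductibles grâce à la positivité de $D$.
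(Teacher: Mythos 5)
Ta démarche reproduit fidèlement celle du texte, qui traite d'ailleurs la région $S_2$ de manière très lapidaire : on y constate simplement que le monôme dominant et la formule de distance sont identiques au cas $S_1$, que $D>0$ (Lemme \ref{le:au2-bv2} appliqué à $T_2$), et que tout le reste se déroule de façon analogue. Tu identifies correctement les deux points structurels qui importent : $D>0$ garantit l'irréductibilité de $\mathcal{F}$, et le signe de $w$ introduit le facteur $-wz$ dans le seuil. Sur ce deuxième point, note que dans la Section \ref{se:otherregions} le texte énonce « pour les régions $S_2,S_3$ [\ldots] $D<0$ », ce qui contredit à la fois le paragraphe consacré à $S_2$ et le Lemme \ref{le:au2-bv2} (qui ne donne $au^2-bv^2<0$ que sur $T_3\cup T_4$) ; ta version ($D>0$ sur $T_2$) est la cohérente.

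Deux remarques mineures. Premièrement, la mention « hors de l'ensemble mince $M$ » est superflue ici : comme le note le texte au début du \S\ref{se:criticalmeasure}, $M\subset R_2=S_3\cup S_4$ ne rencontre pas $S_2$, d'où l'absence de $\setminus M$ dans l'énoncé du théorème \ref{th:S2} (contrairement au théorème \ref{th:S3}). Deuxièmement, tu passes un peu vite sur le point le plus délicat de la transposition : dans $T_2$ on a $v=m-g(h_1D_1)n\leqslant -1$ au lieu de $\geqslant 1$, si bien que l'argument d'existence de l'entier $l\geqslant 1$ dans \eqref{eq:congrln} doit être revu (l'existence de $l\in\ZZ$ vient de la congruence $\gamma_1 m\equiv n\,[g(h_2D_2)]$, mais son signe et le domaine de $(l',m')$ changent). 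Ce n'est qu'une question de bornes et non d'obstruction, mais c'est précisément là que la « gestion minutieuse des signes » que tu évoques est indispensable.
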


\subsubsection{Régions $S_3$ et $S_4$}

Comme expliqué précédemment, on doit retirer dans la somme \eqref{eq:decomp1} les ensembles $E^{\underline{\varepsilon},\underline{\theta}}_{C_3,D,W}(\mathbf{D})$ dont les paramètres vérifient $-C_3D=\square$. La mesure limite obtenue s'écrit de la façon suivante.
\begin{theorem}\label{th:S3}
		Pour toute $f\in\CBY$, on a pour $i=3,4$,
		\begin{align*}
		&\delta_{\varrho^{-1}(S_3)\cap U,Q,B,\frac{5}{2}}(f)\\
		=&
		B^\frac{1}{5}\left(\int f (w,z) \frac{\mathds{E}_{S_3}^\dprime(w(-z)\sqrt{z+w})}{w(-z)\sqrt{z+w}}\operatorname{d}w\operatorname{d}z +O_{f}\left(\frac{(\log\log B)^\frac{5}{6}}{(\log B)^\frac{2-\sqrt{2}}{6}}\right)\right);\\
		&\delta_{\varrho^{-1}(S_4)\cap U,Q,B,\frac{5}{2}}(f)\\
		=&B^\frac{1}{5}\left(\int f(w,z)\frac{\mathds{E}_{S_4}^\dprime(wz\sqrt{-(w+z)}) }{wz\sqrt{-(w+z)}} \operatorname{d}w\operatorname{d}z +O_{f}\left(\frac{(\log\log B)^\frac{5}{6}}{(\log B)^\frac{2-\sqrt{2}}{6}}\right)\right),
		\end{align*}
		où pour $i=3,4$, $$\mathds{E}_{S_i}^\dprime(\cdot)=\sum_{\substack{C_3,W\in\NN_{\geqslant 1},D\in\ZZ_{< 0}\\\pgcd(C_3W,D)=1,-DC_3\neq\square}}\sum_{\substack{D_1D_2\mid D\\\pgcd(D_1,D_2)=1}}\mathbf{E}^\dprime_{S_i,C_3,D,\mathbf{D},W}(\cdot)$$
		est définie de façon analogue à des quantités dans la preuve du Théorème \ref{th:principalthm}.
\end{theorem}
\subsection{Décompte de la partie mince}\label{se:countthethinpart}
Rappelons que $M$ est entièrement contenues dans la région $R_2$. Dans l'esprit de l'équidistribution globale, il n'est pas raisonnable à croire que la distribution locale autour de $Q$ soit décrite par une partie semi-algébrique qui n'est pas dense pour la topologie réelle. 
La majoration suivante améliore celle dans la Proposition \ref{po:uniformupperbound}.
\begin{lemma}\label{le:upperboundM}
	Pour toute région $R$ à support compact dans $S_3\cup S_4$, nous avons
	$$\delta_{M,Q,B,\frac{5}{2}}(\mathbf{1}_R)\ll_R B^\frac{1}{5}\log B.$$
\end{lemma}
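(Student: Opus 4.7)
Le plan consiste à adapter le schéma de dénombrement de la section \ref{se:criticalmeasure} à la partie mince $M$, en exploitant le fait suivant: le polynôme intervenant dans le problème de congruence \eqref{eq:condcongtocount} devient \emph{scindé} sur $M$. Ceci remplace l'estimation linéaire de la proposition \ref{le:varrhoF} par l'ordre $X\log B$ fourni par la proposition \ref{po:splitcong}, d'où le facteur logarithmique supplémentaire. C'est exactement ce qui apparaît dans la minoration (lemme \ref{le:lowerboundM}), et l'enjeu sera de vérifier que cet ordre est effectivement atteint comme majoration uniforme.

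Premièrement, puisque $R$ est à support compact dans $S_3\cup S_4$, le lemme \ref{le:finitenessofpara} garantit que tous les paramètres $C_3,D,W,D_1,D_2$ intervenant dans la partition par équations de Pell-Fermat $\mathcal{E}_{C_3,D}$ de \eqref{eq:eqpellfermatgen} restent bornés par une constante ne dépendant que de $R$. Il suffira donc de majorer la contribution de chaque choix fixé de ces paramètres, la réunion finale étant une somme finie uniforme. Deuxièmement, pour $(C_3,D,W,D_1,D_2)$ fixé, la proposition \ref{le:splitpoly} caractérise l'intersection avec $M$ par la condition $-C_3D=\square$; combinée à $\pgcd(C_3,D)=1$ et $D<0$ (lemme \ref{le:au2-bv2} pour les régions $S_3,S_4$), elle force $C_3$ et $-D$ à être tous deux des carrés parfaits. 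Ainsi le polynôme
$$\mathcal{F}(X)=\frac{qC_3g(h_1D_1h_2D_2)^2e_3}{e_1e_2}X^2+\frac{qD}{e_1e_2e_3}$$
de \eqref{eq:formF} est scindé sur $\ZZ[X]$ pour chaque tuple admissible.

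Troisièmement, je reprends les réductions \eqref{eq:decomp1}--\eqref{eq:decomp4}, qui ne reposent pas sur l'irréductibilité de $\mathcal{F}$ et restent donc valables. Elles ramènent le comptage, pour chaque tuple fixé, au cardinal de
$$\left\{(l',m')\in\NN^2:\mathcal{F}(l')\equiv 0[m'],\; m'\ll_R B^{1/5}\right\}.$$
La proposition \ref{po:splitcong}, appliquée au polynôme scindé $\mathcal{F}$, donne alors
$$\sum_{m'\leqslant c_R B^{1/5}}\varrho_{\mathcal{F}}(m')\ll_R B^{1/5}\log B.$$
Sommant sur les $O_R(1)$ valeurs possibles des paramètres ainsi que sur les termes issus des diverses inversions de Möbius (également en nombre fini par la majoration $\varrho_{\mathcal{F}}(n)\ll n^{\varepsilon}$ de \eqref{eq:upperboundofvarF}), on obtient la majoration annoncée.

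L'obstacle principal est essentiellement conceptuel: reconnaître que la seule modification quantitative par rapport à l'analyse de la région $R_1$ vient du pôle double en $s=1$ de la série de Dirichlet $D_{\mathcal{F}}(s)$ dans le cas scindé, qui engendre précisément le facteur $\log B$ supplémentaire. Une difficulté technique mineure est de vérifier que les constantes implicites dans la proposition \ref{po:splitcong} peuvent être prises uniformes sur les finitement nombreux polynômes $\mathcal{F}$ considérés, ce qui est immédiat vu le caractère fini de l'ensemble des paramètres.
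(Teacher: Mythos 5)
Your proposal is correct and follows essentially the same strategy as the paper: on $M$ the polynomial $C_3X^2+D$ is split, so Proposition \ref{po:splitcong} yields a contribution of order $B^{1/5}\log B$ per Pell--Fermat equation, and the finiteness of admissible parameters $(C_3,D,W,D_i)$ on a compact region gives the uniform bound. The only difference is presentational: the paper shortcuts the decomposition chain by simply dropping the zoom and coprimality conditions (which only enlarges the set), bounding the contribution of each $\mathcal{E}_{C_3,D}$ directly by $\sum_{k\leqslant CB^{1/5}}\varrho_{C_3X^2+D}(k)$, whereas you retrace \eqref{eq:decomp1}--\eqref{eq:decomp4} to arrive at $\mathcal{F}$; for an upper bound the Möbius inversions should then be handled by set inclusion (taking the largest superset, e.g. $h_i=1$) rather than as signed sums, but this is a minor bookkeeping point and the conclusion is the same.
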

\begin{proof}
	Par un argument comme le Lemme \ref{le:finitenessofpara}, nous n'avons qu'un nombre fini de paramètres $C_3,D$ possibles paramétrant localement les points de $M$ dans $R$. Soit $\varepsilon>0$ tel que $R\subset \mathbb{B}(0,\varepsilon)$. Une borne de type \eqref{eq:condzoom1} pour les points dans $T_3\cup T_4$ nous donne que, compte-tenu de la Proposition \ref{le:splitpoly},
	pour un certain $\gamma(C_3,D,\varepsilon)>0$, $$\delta_{M,Q,B,\frac{5}{2}}(\mathbf{1}_R)\leqslant \sum_{\substack{-C_3,D=\square\\C_3,D\ll_{\varepsilon} 1}} \sum_{n\leqslant \gamma(C_3,D,\varepsilon)B^\frac{1}{5}}\varrho_{C_3X^2+D}(n),$$
	chaque somme pour $C_3,D$ fixés contribuant à l'ordre de grandeur $O_{C_3,D}(B^{\frac{1}{5}}\log B)$, en utilisant la Proposition \ref{po:splitcong}.
	D'où la majoration énoncée.
\end{proof}

La Proposition \ref{po:splitcong} et les raisonnements dans \S\ref{se:criticalmeasure} nous suggèrent qu'on devrait avoir une minoration du type, pour tout $\varepsilon>0$ suffisamment grand,
$$\delta_{M,Q,B,\frac{5}{2}}(\chi(\varepsilon))\gg_\varepsilon B^\frac{1}{5}\log B.$$
(La minoration évidente $\gg_\varepsilon B^\frac{1}{5}$ peut être déduite du Théorème \ref{th:Pagelot}.)
Cependant, le résultat de Dartyge-Martin \cite[Theorem 1]{Dartyge-Martin} (cf. \S\ref{se:notuniformdistr}) suggère aussi que, si l'on prend une fonction trigonométrique dite $h$, alors il devrait exister $C(h)>0$ telle que
$$\delta_{M,Q,B,\frac{5}{2}}(h)\sim C(h)B^\frac{1}{5}.$$
Ces deux formules donnent une autre évidence en faveur de non-équirépartition des points dans $M$.
\appendix
\section{Congruences polynomiales et équidistribution modulo 1, d'après Erd\H{o}s et Hooley}\label{se:congHooley}
Nous considérons ici une version de congruences polynomiales à résidu fixé à la Hooley. 
Nous serons concernés par des dénombrements analogues au cardinal de l'ensemble des $(l,m)\in\NN^2$ tels que
\begin{equation}\label{eq:conditionF}
\lambda_2 B^\frac{1}{5}\leqslant m\leqslant \lambda_1 B^\frac{1}{5},\quad \tau_2 \leqslant \frac{l}{m}\leqslant \tau_1,\quad F(l)\equiv 0[m],
\end{equation}
où $0<\lambda_2<\lambda_1,0<\tau_2<\tau_1\leqslant 1$ et $F(X)\in\ZZ[X]$ est un polynôme de degré $\geqslant 2$.
Nous allons distinguer deux cas dans la discussion selon que $F(X)$ est irréductible ou non dans $\ZZ[X]$. 

Étant donné $F(X)\in\ZZ[X]$, on définit la fonction 
\begin{equation}\label{eq:rhoF}
\varrho_F(n)=\#\{1\leqslant k\leqslant n:F(k)\equiv 0[n]\}.
\end{equation}
C'est une fonction arithmétique multiplicative. 
On y associe la série de Dirichlet (définie pour $\Re(s)$ suffisamment grand)
\begin{equation}\label{eq:DFs}
D_F(s)=\sum_{n=1}^{\infty} \frac{\varrho_F(n)}{n^s}.
\end{equation}

\subsection{Cas irréductible}\label{se:irred}
On discute premièrement le cas où $F(X)$ est irréductible sur $\ZZ[X]$.
\subsubsection{Ordre Moyen}
Une observation qui remonte à Dedekind et Erd\H{o}s dit que la série $D_F(s)$ se comporte de façon similaire à la fonction zêta de Dedekind associée au corps de nombres engendré par une racine de $F(X)$. En conséquence on en déduit l'ordre moyen de $\varrho_F$ (Proposition \ref{le:varrhoF}), à l'aide d'une méthode d'analyse complexe standard.
\begin{proposition}[Erd\H{o}s, \cite{Erdos}]\label{le:varrhoF}
	Supposons que $F(X)\in\ZZ[X]$ est irréductible. Soit $\theta$ une racine algébrique de $F(X)$ et notons $K_F=\QQ(\theta)$ le corps de nombres qu'il génère. Alors nous avons
	$$D_F(s)=\zeta_{K_F}(s)\varPsi(s),$$
	où $\zeta_{K_F}(s)$ est la fonction zêta de Dedekind du corps $K_F$ et $\varPsi(s)$ est méromorphe et bornée dans le demi-plan $\Re(s)>\frac{1}{2}+\varepsilon,\forall \varepsilon>0$. Par conséquent, la série $D_F(s)$ admet un pôle simple en $s=1$.
	De plus, il existe $\lambda\in]0,1[$ dépendant du polynôme $F$ tel que
	$$\sum_{n\leqslant X} \varrho_F(n)=Z_F X+O(X^{\lambda}),$$
	où, 
	\begin{equation}\label{eq:zf}
	Z_F=\varPsi(1)\lim_{s\to 1} (s-1)\zeta_{K_F}(s).
	\end{equation}
\end{proposition}
\begin{proof}
	cf. e.g. \cite[\S5]{Greaves} ou \cite[\S7]{Daniel}
\end{proof}

\begin{remark*}
	Une formule asymptotique pour l'ordre moyen de $\varrho_F$ est également obtenue par Hooley \cite{Hooley3}, \cite{Hooley1} dans le cas quadratique. 
\end{remark*}

\subsubsection{Équirépartion modulo $1$}\label{se:uniformdistmodulo1}
Nous continuons à supposer dans cette section que le polynôme $F(X)$ est irréductible. Hooley \cite{Hooley2} a démontré le résultat suivant sur des sommes d'exponentielles.
Ce théorème a été énoncé pour les polynômes primitifs, mais la même preuve s'applique aussi à ceux qui ne sont pas forcément primitifs.

\begin{theorem}[Hooley \cite{Hooley2}, Theorem 1]\label{th:Hooley}
	Soient $X>1,h\in\NN_{\geqslant 1},d=\deg F(X)\geqslant 2$ et
	$$R(h,X)=\sum_{1\leqslant k\leqslant X}\sum_{\substack{F(v)\equiv 0 [k]\\ 1\leqslant v\leqslant k}}\operatorname{exp}\left(\frac{2\pi ih v}{k}\right).$$
	Alors
	$$R(h,X)=O_F\left(\frac{h^\frac{1}{2}X(\log\log X)^{\frac{1}{2}(d^2+1)}}{(\log X)^{\delta_d}}\right)\quad \text{où}\quad \delta_d=\frac{d-\sqrt{d}}{d!}.$$
	(N.B. Le résultat original de Hooley omet l'ordre de grandeur de $h$. Mais on le récupère facilement de sa preuve.)
\end{theorem}

Le but de cette section est de démontrer:
\begin{proposition}\label{prop:discrepancy}
	Pour tout intervalle compact $I\subseteq \RR$, nous avons
	$$\sum_{1\leqslant k\leqslant X}\sum_{\substack{v\in\ZZ\\F(v)\equiv 0 [k]}}\mathbf{1}_I\left(\frac{v}{k}\right)=Z_F |I| X+O\left(X\frac{(\log\log X)^{\alpha_d}}{(\log X)^{\beta_d}}\right),$$
	où $Z_F$ est \eqref{eq:zf} et $$\alpha_d=\frac{1}{3}(d^2+1),\quad \beta_d=\frac{1}{3}\delta_d.$$
\end{proposition}

Nous notons $(s_n)$ la suite construite en numérotant les nombres rationnels (pas nécessairement réduit) $\frac{v}{k}\in\mathopen[0,1\mathclose[$ tels que $F(v)\equiv 0[k]$ par rapport à l'ordre croissant des dénominateurs. C'est-à-dire $\frac{v_1}{k_1} \prec \frac{v_2}{k_2}$ comme éléments de $(s_n)$ si et seulement si
$$k_1\leqslant k_2,\quad \text{ou} \quad k_1=k_2 \text{ et } v_1\leqslant v_2.$$ 
Le théorème de Hooley implique que la suite $(s_n)$ est équirépartie modulo $1$, au sens de Weyl (cf. \cite[Chapter 1]{K-N}).
Nous avons besoin d'une estimation de la discrépance $D_N(s_n)$ de cette suite.
Pour la définition de \emph{la discrépance}, voir par exemple \cite[Chapter 2]{K-N}.
Les outils sont l'inégalité de Koksma-Denjoy (cf. \cite[p. 143]{K-N}) et celle de Erd\H{o}s-Turán (cf. \cite[Theorem 2.5 p. 112]{K-N}).
\begin{theorem}[Koksma-Denjoy]\label{th:koksma-denjoy}
	Soient $(x_n)$ une suite de nombres réels dans $[0,1[$ et $N\geqslant 1$. Soit $\phi$ une fonction mesurable à variation bornée définie sur $\mathopen]0,1\mathclose]$ (on note $V(\phi)$ la variation totale de $\phi$). Alors
	$$\left|\frac{1}{N}\sum_{n=1}^{N}\phi(x_n)-\int_{0}^{1}\phi\right|\leqslant V(\phi)D_N(x_n).$$
\end{theorem}

\begin{theorem}[Erd\H{o}s-Turán]\label{th:Erdosturan}
	Pour tout $m\in\NN_{\geqslant 1}$, on a
	$$D_N(x_n)=O\left(\frac{1}{m}+\sum_{h=1}^{m}\frac{1}{h}\left|\frac{1}{N}\sum_{n=1}^{N}\operatorname{exp}(2\pi i h x_n)\right|\right),$$
	où la constante implicite est absolue.
\end{theorem}

\begin{corollary}\label{co:estimationofdiscrepancy}
	Avec les notations ci-dessus, nous avons 
	$$D_N(s_n)=O_F\left(\frac{(\log\log N)^{\frac{1}{3}(d^2+1)}}{(\log N)^{\frac{2}{3}\delta_d}}\right).$$
\end{corollary}
\begin{proof}[Démonstration du corollaire]
	Fixons $N\in\NN_{\geqslant 1}$ et notons
	\begin{equation}\label{eq:ShN}
	S(h,N)=\sum_{n=1}^{N}\exp\left(2\pi i h x_n\right).
	\end{equation}
	Soit $M$ le dénominateur de $x_N$. D'après la Proposition \ref{le:varrhoF} et la relation suivante
	$$\sum_{k<M}\varrho_F(k)<N\leqslant \sum_{k\leqslant M}\varrho_F(k),$$
	il existe $C_1,C_2$ deux constantes absolues positives telles que
\begin{equation}\label{eq:MN}
	C_1 M\leqslant N\leqslant C_2M.
\end{equation}
	Nous avons aussi la comparaison suivante 
	$$S(h,N)=R(h,M)+O(\varrho_F(M))=R(h,M)+O(M^\varepsilon),\quad \forall\varepsilon>0.$$
	D'où, en utilisant le Théorème \ref{th:Hooley} et l'estimation d'Erd\H{o}s-Turán (Théorème \ref{th:Erdosturan}), nous calculons que pour tout $m\in\NN_{\geqslant 1}$,
	\begin{align*}
	D_N(s_n)&=O\left(\frac{1}{m}+\sum_{h=1}^{m}\frac{1}{h}\left|\frac{1}{N}S(h,N)\right|\right)\\
	&=O\left(\frac{1}{m}+\sum_{h=1}^{m}\frac{1}{h}\left(\left|\frac{R(h,M)}{M}\right|+\frac{1}{M^{1-\varepsilon}}\right)\right)\\
	&=O_F\left(\frac{1}{m}+\sum_{h=1}^{m}\frac{1}{h}\left(h^\frac{1}{2}\frac{(\log\log M)^{\frac{1}{2}(d^2+1)}}{(\log M)^{\delta_d}}+\frac{1}{M^{1-\varepsilon}}\right)\right)\\
	&=O_F\left(\frac{1}{m}+\sqrt{m}\frac{(\log\log M)^{\frac{1}{2}(d^2+1)}}{(\log M)^{\delta_d}}+\frac{\log m}{M^{1-\varepsilon}}\right).
	\end{align*}
	En prenant $$m=\left\lfloor\left(\frac{(\log M)^{\delta_d}}{(\log\log M)^{\frac{1}{2}(d^2+1)}}\right)^\frac{2}{3}\right\rfloor,$$
	nous obtenons, compte-tenu de \eqref{eq:MN},
	$$D_N(s_n)=O_F\left(\frac{(\log\log M)^{\frac{1}{3}(d^2+1)}}{(\log M)^{\frac{2}{3}\delta_d}}\right)=O_F\left(\frac{(\log\log N)^{\frac{1}{3}(d^2+1)}}{(\log N)^{\frac{2}{3}\delta_d}}\right),$$
	la majoration souhaitée.
\end{proof}
\begin{proof}[Démonstration de la Proposition \ref{prop:discrepancy}]
	Quitte à décomposer l'intervalle $I$ en une union de sous-intervalles disjoints, on peut supposer que $|I|\leqslant 1$ et $I\subseteq ]n_0,n_0+1]$ avec $n_0\in\ZZ$. Pour $k\in\NN_{\geqslant 1}$ fixé, tout $v\in\ZZ$ tel que $\frac{v}{k}\in I$ correspond à un unique $0\leqslant v^\prime\leqslant k-1$ tel que $\frac{v^\prime}{k}\in I-n_0$.
	Posons $$N=N(X)=\sum_{k\leqslant X}\varrho_F(k).$$
	D'après la Proposition \ref{le:varrhoF}, $N\sim Z_F X$. Alors compte tenu de l'inégalité de Koksma-Denjoy (Théorème \ref{th:koksma-denjoy}) et du Corollaire \ref{co:estimationofdiscrepancy},
	\begin{align*}
	\sum_{k\leqslant X}\sum_{\substack{v\in\ZZ\\F(v)\equiv 0 [k]}}\mathbf{1}_I\left(\frac{v}{k}\right)&=\sum_{k\leqslant X}\sum_{\substack{F(v^\prime)\equiv 0 [k]\\ 0\leqslant v^\prime\leqslant k-1}}\mathbf{1}_{I-n_0}\left(\frac{v^\prime}{k}\right)\\&=\sum_{n=1}^{N}\mathbf{1}_{I-n_0}(s_n)
	=N\int_{0}^{1}\mathbf{1}_{I-n_0}(x)\operatorname{d}x+O\left(ND_N(s_n)\right)\\
	&=N|I| +O_F\left(\frac{N(\log\log N)^{\frac{1}{3}(d^2+1)}}{(\log N)^{\frac{2}{3}\delta_d}}\right)\\
	&=Z_F |I| X +O_F\left(X\frac{(\log\log X)^{\frac{1}{3}(d^2+1)}}{(\log X)^{\frac{2}{3}\delta_d}}\right).
	\end{align*}
\end{proof}

\subsection{Cas scindés}\label{se:splitcase}

Nous nous intéresserons exclusivement dans cette section aux polynômes quadratiques \emph{scindés}, i.e. réductible sur $\ZZ[X]$. 
Fixons $a\in\NN_{\geqslant 1},c\in\ZZ_{\neq 0}$. Définissons $$F(X)=aX^2+c$$
La condition que $F(X)$ soit réductible revient à (on note $\Delta(F)$ le discriminant)
$$\Delta(F)=\square\Leftrightarrow -ac=\square.$$
\subsubsection{Ordre moyen}
La fonction donnant le nombre de congruents $\varrho_F$ ainsi que la série de Dirichlet $D_F(s)$ sont de nature différente de celles dans les cas irréductibles. En effet il s'agit ici d'un problème additif de diviseurs, qui fut considéré en premier par Ingham \cite{Ingham}.
\begin{proposition}\label{po:splitcong}
	Nous avons
	$$D_F(s)=\zeta(s)^2\varPhi(s),$$
	où $\zeta(s)$ est la fonction zêta de Riemann et $\varPhi(s)$ est holomorphe, bornée et sans zéros pour $\Re(s)>\frac{1}{2}+\varepsilon,\forall \varepsilon>0$.
	Par conséquent, la série $D_F(s)$ admet en $s=1$ un pôle d'ordre $2$ et converge absolument pour $\Re s>1$.
	De plus, il existe une constante $C_F>0$ telle que
	$$\sum_{n\leqslant X}\varrho_F(n)\sim C_F X\log X.$$
\end{proposition}
\subsubsection{Répartition uniforme}\label{se:notuniformdistr}
 Si l'on considère des sommes d'exponentielles $R(h,X)$ définies de façon similaire comme dans le Théorème \ref{th:Hooley} pour un polynôme $F$ quadratique scindé, Martin et Sitar ont démontré \cite[Theorem 1.4]{Martin-Sitar} que $R(h,X)=O(X(\log X)^{\sqrt{2}-1+\varepsilon})$. Comme conjecturé dans \cite[p. 14]{Martin-Sitar}, on ne devrait pas espérer une majoration du type $R(h,X)=o_h(X)$. Récemment Dartyge et Martin \cite[Theorem 1]{Dartyge-Martin} réussissent à établir la formule asymptotique 
 $$R(h,X)=C(F,h) X+O(X^{\frac{4}{5}+\varepsilon}),$$
 où $C(F,h)\neq 0$ dépend de $F$ et de $h$.
Une conséquence immédiate de leur résultat est que, en rappelant la somme d'exponentielle $S(h,X)$ \eqref{eq:ShN}, 
$$\frac{1}{X}S(h,X)\to C(F,h)\neq 0,\quad X\to \infty.$$
Le critère de Weyl (cf. \cite[Chapter 1, Theorem 2.1]{K-N}) nous dit que la suite $(s_n)$ formée par les racines de congruence n'est pas uniformément répartie modulo $1$. 
\section*{Remerciements}
Ce travail fait suite à la thèse de doctorat de l'auteur réalisée à l'Université Grenoble Alpes. Il tient à remercier Emmanuel Peyre de son encouragement pendant ces années et David Bourqui de son intérêt qu'il a porté à ce projet. Ses reconnaissances s'adressent à Régis de la Bretèche, Étienne Fouvry, Florent Jouve et Zhiyu Tian pour des discussions éclairantes, et également à l'arbitre anonyme pour d'utiles conseils. L'hospitalité du Max-Planck-Institut für Mathematik et le soutien de Kévin Destagnol sont sincèrement appréciés. L'auteur était partiellement supporté par le projet GARDIO, par un Riemann Fellowship, et par le budget DE1646/4-2 Deutsche Forschungsgemeinschaft.
\bibliographystyle{plain}\addcontentsline{toc}{section}{Références}
\bibliography{Y3}

\end{document}